\newtheorem{theorem}{Theorem}[subsection]
\newtheorem{definition}[theorem]{Definition}
\newtheorem{definition-lemma}[theorem]{Definition/Lemma}
\newtheorem{definition-explanation}[theorem]{Definition/Explanation}
\newtheorem{explanation-definition}[theorem]{Explanation/Definition}
\newtheorem{definition-fact}[theorem]{Definition/Fact}
\newtheorem{definition-notation}[theorem]{Definition/Notation}
\newtheorem{lemma}[theorem]{Lemma}
\newtheorem{lemma-definition}[theorem]{Lemma/Definition}
\newtheorem{proposition}[theorem]{Proposition}
\newtheorem{corollary}[theorem]{Corollary}
\newtheorem{remark}[theorem]{\it Remark}
\newtheorem{remark-notation}[theorem]{\it Remark/Notation}
\newtheorem{conjecture}[theorem]{Conjecture}
\newtheorem{example}[theorem]{Example}
\newtheorem{example-definition}[theorem]{Example/Definition}
\newtheorem{notation}[theorem]{Notation}
\newtheorem{definition-prototype}[theorem]{Definition-Prototype}
\numberwithin{equation}{subsection}
\newtheorem{stheorem}{Theorem}[section]
\newtheorem{sdefinition}[stheorem]{Definition}
\newtheorem{sdefinition-lemma}[stheorem]{Definition/Lemma}
\newtheorem{sdefinition-explanation}[stheorem]{Definition/Explanation}
\newtheorem{sexplanation-definition}[stheorem]{Explanation/Definition}
\newtheorem{sdefinition-fact}[stheorem]{Definition/Fact}
\newtheorem{sdefinition-notation}[theorem]{Definition/Notation}
\newtheorem{slemma}[stheorem]{Lemma}
\newtheorem{slemma-definition}[stheorem]{Lemma/Definition}
\newtheorem{sproposition}[stheorem]{Proposition}
\newtheorem{sremark}[stheorem]{\it Remark}
\newtheorem{sremark-notation}[stheorem]{\it Remark/Notation}
\newtheorem{sexample-definition}[stheorem]{Example/Definition}
\newtheorem{sdefinition-prototype}[stheorem]{Definition-Prototype}
\newtheorem{ssdefinition-lemma}[sstheorem]{Definition/Lemma}
\newtheorem{ssdefinition-explanation}[sstheorem]{Definition/Explanation}
\newtheorem{ssexplanation-definition}[sstheorem]{Explanation/Definition}
\newtheorem{ssdefinition-fact}[stheorem]{Definition/Fact}
\newtheorem{ssdefinition-notation}[theorem]{Definition/Notation}
\newtheorem{sslemma-definition}[sstheorem]{Lemma/Definition}
\newtheorem{ssremark-notation}[sstheorem]{\it Remark/Notation}
\newtheorem{ssexample-definition}[sstheorem]{Example/Definition}
\newtheorem{ssdefinition-prototype}[sstheorem]{Definition-Prototype}
\newcommand{\Br}{\mbox{\it Br}\,}
\newcommand{\Coh}{\mbox{\it Coh}\,}
\newcommand{\DCG}{\mbox{\it DCG}\,}
\newcommand{\End}{\mbox{\it End}\,}
\newcommand{\Endsheaf}{\mbox{\it ${\cal E}\!$nd}\,}
\newcommand{\Extsheaf}{\mbox{\it ${\cal E}$xt}\,}
\newcommand{\FM}{\mbox{${\cal F}$\!${\cal M}$}\,}
 \newcommand{\smallFM}{\mbox{\small${\cal F}$\!${\cal M}$}\,}
\newcommand{\HN}{\mbox{\it HN}\,}
\newcommand{\Hom}{\mbox{\it Hom}\,}
\newcommand{\Id}{\mbox{\it Id}\,}
\newcommand{\Imaginary}{\mbox{\it Im}\,}
\newcommand{\Image}{\mbox{\it Im}\,}
\newcommand{\Int}{\mbox{\it Int}\,}
\newcommand{\Ker}{\mbox{\it Ker}\,}
\newcommand{\Kthree}{\mbox{\it K3}}
\newcommand{\KCone}{\mbox{\it KCone}\,}
\newcommand{\NE}{\mbox{\it NE}\,}
\newcommand{\Poly}{\mbox{\it Poly}\,}
  \newcommand{\scriptsizePoly}{\mbox{\scriptsize\it Poly}\,}
\newcommand{\Quot}{\mbox{\it Quot}\,}
\newcommand{\Real}{\mbox{\it Re}\,}
\newcommand{\Spec}{\mbox{\it Spec}\,}
 \newcommand{\boldSpec}{\mbox{\it\bf Spec}\,}
\newcommand{\Stab}{\mbox{\it Stab}\,}
\newcommand{\Supp}{\mbox{\it Supp}\,}
\newcommand{\Tor}{\mbox{\it Tor}}
\newcommand{\smallZss}{\mbox{\small\it $Z$-$ss$}\,}
 \newcommand{\scriptsizeZss}{\mbox{\scriptsize\it $Z$-$ss$}\,}
 \newcommand{\tinyZss}{\mbox{\tiny\it $Z$-$ss$}\,}
\newcommand{\ch}{\mbox{\it ch}\,}
\newcommand{\degree}{\mbox{\it deg}\,}
\newcommand{\dimm}{\mbox{\it dim}\,}
\newcommand{\gr}{\mbox{\it gr}\,}
\newcommand{\scriptsizenaive}{\mbox{\scriptsize\it naive}}
\newcommand{\pr}{\mbox{\it pr}}
\newcommand{\td}{\mbox{\it td}\,}
\newcommand{\longleftaarrow}{\longleftarrow\hspace{-3ex}\longleftarrow}
\newcommand{\longrightaarrow}{\longrightarrow\hspace{-3ex}\longrightarrow}
\begin{document}

\enlargethispage{24cm}

\begin{titlepage}

$ $

\vspace{-1.5cm} 

\noindent\hspace{-1cm}
\parbox{6cm}{\small  December 2012}\
   \hspace{6cm}\
   \parbox[t]{6cm}{yymm.nnnn [math.AG] \\
                D(10.1): D1, central charge,\\
                $\mbox{\hspace{3.8em}} $  auxiliary moduli stack}

\vspace{2cm}

\centerline{\large\bf
  A mathematical theory of D-string world-sheet instantons,}
\vspace{1ex}
\centerline{\large\bf
 I: Compactness of the stack of $Z$-semistable  Fourier-Mukai transforms}
\vspace{1ex}
\centerline{\large\bf
  from a compact family of nodal curves to a projective Calabi-Yau $3$-fold}


\vspace{3em}

\centerline{\large
  Chien-Hao Liu
   \hspace{1ex} and \hspace{1ex}
  Shing-Tung Yau
}

\vspace{4em}

\begin{quotation}
\centerline{\bf Abstract}

\vspace{0.3cm}

\baselineskip 12pt  
{\small
  In a suitable regime of superstring theory, D-branes in a Calabi-Yau space
   and their most fundamental behaviors can be nicely described
   mathematically through morphisms from Azumaya spaces
   with a fundamental module to that Calabi-Yau space.
  In the earlier work [L-L-S-Y]
    (D(2): arXiv:0809.2121 [math.AG], with Si Li and Ruifang Song)
	 from the project,
   we explored this notion for the case of D1-branes (i.e. D-strings)  and
    laid down some basic ingredients toward understanding
	  the notion of D-string world-sheet instantons in this context.
  In this continuation, D(10), of D(2), we move on to construct
   a moduli stack of semistable morphisms from Azumaya nodal curves
   with a fundamental module to a projective Calabi-Yau $3$-fold $Y$.
  In this Part I of the note, D(10.1),
   we define
              the notion of twisted central charge $Z$
			     for Fourier-Mukai transforms of dimension $1$ and width $[0]$
				  from nodal curves
            and the associated stability condition on such transforms
   and prove that for a given compact stack of nodal curves $C_{\cal M}/{\cal M}$,
     the stack $\smallFM^{1,[0];\tinyZss}_{C_{\cal M}/{\cal M}}(Y,c)$
	 of $Z$-semistable Fourier-Mukai transforms
	 of dimension $1$ and width $[0]$
	  from nodal curves in the family $C_{\cal M }/{\cal M}$
	  to $Y$ of fixed twisted central charge $c$ is compact.
  For the application in the sequel D(10.2),
   $C_{\cal M}/{\cal M}$ will contain
    $C_{\overline{\cal M }_g}/\overline{\cal M}_g$ as a substack  and
	$\smallFM^{1,[0];\tinyZss}_{C_{\cal M}/{\cal M}}(Y,c)$
       in this case will play a key role in defining stability conditions
	   for morphisms from arbitrary Azumaya nodal curves
	    (with the underlying nodal curves not necessary
     		in the family $C_{\cal M}/{\cal M}$)
       to $Y$.
 } 
\end{quotation}

\vspace{2em}

\baselineskip 12pt
{\footnotesize
\noindent
{\bf Key words:} \parbox[t]{14cm}{D-string, world-sheet instanton;
      Azumaya nodal curve, fundamental module, morphism; \\
	  central charge, stability condition, wall,  chamber structure;
	  compactness of moduli.
 }} 

\bigskip

\noindent {\small MSC number 2010: 14D20, 81T30, 14F05, 14N35, 14A22.
} 

\bigskip

\baselineskip 10pt
{\scriptsize
\noindent{\bf Acknowledgements.}
We thank Andrew Strominger and Cumrun Vafa for lectures and discussions
 that influence our understanding of strings, branes, and gravity.
C.-H.L.\ thanks in addition
 Si Li , Ruifang Song
    for discussions in the biweekly Saturday D-brane Working Seminar, spring 2008;
 Alina Marian, Yu-jong Tzeng, Baosen~Wu
    for discussions and topic courses in later years (fall 2009, fall 2010, fall 2011)
    that influenced his thoughts on enumerative algebraic geometry and coherent sheaves;
 Paul Aspinwall
    for a clarification of central charge in his notes (December 2011);
 Arend Bayer for a discussion on stability (May 2012);
 Chieh-Cheng Jason Lo
   for the communication of his thesis  and an expert's concise guide
   on the notion of stability since the work of Tom Bridgeland in 2002
   (September 2012)    and
 Cumrun~Vafa for a clarification of a subtle point
   on stability of D-branes from string theory aspect  (December 2012)
   and reference guide along the way.
He thanks also
 Hao~Xu, Adam Jacob for discussions and topic courses
   that drew his attention to the notion of stability in other contents
  (spring 2012, fall 2012);
 Mboyo Esole
   for the discussions and literature guide in and after his topic course in F-theory
   (fall 2012);
 Siu-Cheong Lau, S.L., Yi Li,  Yu-Shen Lin, Li-Sheng Tseng, Ke Zhu
   for discussions on other issues related to D-branes and Gromov-Witten theory;
 Erel~Levine, Andrew~Strominger/St\'{e}phene Detournay,
 Noam Elkies, C.V.\
   for other basic/topic courses in spring and fall 2012;
 Master Guo-Guang~Shr,  Master Hui-Min Shr
   for hospitality and discussions on the notion of space-times in large dimensions;
 Hsin-Yu~Chen, Rui-Hsieh~Shen, Chao-Tze Liu,  Ann~Willman,
 Rev.~Campbell~Willman and Betty
   for hospitality while parts of D(10) are under brewing;  and
 Ling-Miao Chou for moral support.
The project is supported by NSF grants DMS-9803347 and DMS-0074329.
} 

\end{titlepage}

\newpage

\begin{titlepage}

$ $

\vspace{12em} 

\centerline{\small\it
 Chien-Hao Liu dedicates this subseries D(10), to be completed, to}
\centerline{\small\it
 his uncle Prof.~Pin-Hsiung Liu$^{\dagger\ast}$, aunt Ms.~Rui-Be Lin,}
\centerline{\small\it
 and cousin Master Guo-Guang Shr (Te-Ru Liu)}
\centerline{\small\it
 for their hospitality in his first year of college,}
\centerline{\small\it
 years of communications with Te-Ru,}
\centerline{\small\it
 and lots of cherished memories.}

\vspace{12em}

\baselineskip 11pt

{\footnotesize
\noindent
$^{\dagger}$Deceased, winter  2004.  (1925 -- 2004)

\medskip

\noindent $^{\ast}$(From C.H.L.)
{From} the preschool years when I was beginning to be aware of this world,
the big family of my father's generation seemed to already fall apart,
like the story in the Chinese classic
 ``Hong Lou Meng (Dream of the Red Chamber)",
 composed by Hsue-Chin Tsao in the middle of the 18th century.
My third uncle somehow stood out as the most educated
 and also the most independent among my father's eleven siblings;
 and left the big shadow of my grandfather  --  being a medical doctor and a local elite --
 to pursue his own study and dream on anthropology.
Later he got a Harvard Yenching Institute scholarship to visit here
 as a scholar during 1964 -1965.
There, through the interaction with a linguist John Harvey,
 he got deep insight toward a resolution of a kinship problem
 that had puzzled anthropologists for long,
 using {\it combinatorial group theory} (!!!); [L1].
Starting from there, {\it mathematic anthropology}
  became his marking territory,
  a field that would likely deter most anthropologists; [L2].
I remember in my first year of high school, he came to visit us
 and taught my brother and me to play `go'.
But my real contact with him won't  begin
 until I got to college and moved in to his house
  and lived together with my aunt and cousin for a year.
I observed a scholar that is driven not by fame but by his dream.
While writing this,
 lots of cherished memories with my aunt and, particularly, my cousin
 also re-emerge.
I thank them all for shredding away the clumsiness of a country kid
 in a big city.
In preparing this note, I read through parts of [L-G-H] again.
In the last sentence of the last article of the collection,
it was pondered upon what legacy my uncle had left behind.
This is a question that likely can have only personal answers.
But to me, in view of being in the middle of an extremely demanding project
    that still has a long way to go,
 the answer couldn't be more obvious:
 He has given me a role model of perseverance and persistence to pursue one's dream.
 I thus dedicate this D(10), yet to be completed, to the memory of my uncle,
  and to my aunt and cousin Te-Ru/Master Guo-Guang.

\bigskip

\parbox[t]{4em}{[L1]}\
  \parbox[t]{40em}{P.-H.~Liu, $\;\;$
    {\sl Murngin: A mathematical solution}, Mono.\ Ser.~B, no.~2,
	  Institute of Ethnology, Academic Sinica, Taipei, Taiwan, 1969.}

\medskip
	
\parbox[t]{4em}{[L2]}\
  \parbox[t]{40em}{--------, $\;\;$
    {\sl Foundations of kinship mathematics}, Mono.\ Ser.~A, no.~28,
	 Institute of Ethnology, Academic Sinica, Taipei, Taiwan, 1986.} 	
	
\medskip

\parbox[t]{4em}{[L-G-H]}\
  \parbox[t]{40em}{  M.-r.~Lin, P.-y.~Guo and C.-H.~Huang eds., $\;\;$
   {\sl Essays in honor of Professor Pin-Hsiung Liu},
      Institute of Ethnology, Academic Sinica, Taipei, Taiwan, 2008.}
} 

\end{titlepage}


\newpage
$ $

\vspace{-3em}

\centerline{\sc
 D-string world-sheet instantons I: Auxiliary Moduli Stack
 } %

\vspace{2em}


\begin{flushleft}
{\Large\bf 0. Introduction and outline.}
\end{flushleft}
 In a suitable regime of superstring theory, D-branes in a Calabi-Yau space
   and their most fundamental behaviors can be nicely described
   mathematically through morphisms from Azumaya spaces
   with a fundamental module to that Calabi-Yau space.
  In the earlier work [L-L-S-Y]
    (D(2): arXiv:0809.2121 [math.AG], with Si Li and Ruifang Song)
	 from the project,
   we expored this notion for the case of D1-branes (i.e. D-strings)  and
    laid down some basic ingredients toward understanding
	  the notion of D-string world-sheet instantons in this context.
  In this continuation, D(10), of D(2), we move on to construct
   a moduli stack of semistable morphisms from Azumaya nodal curves
   with a fundamental module to a projective Calabi-Yau $3$-fold $Y$.
  In this Part I of the note, D(10.1),
   we define
              the notion of twisted central charge $Z$
			     for Fourier-Mukai transforms of dimension $1$ and width $[0]$
				  from nodal curves
            and the associated stability condition on such transforms
   and prove that for a given compact stack of nodal curves $C_{\cal M}/{\cal M}$,
     the stack $\smallFM^{1,[0];\tinyZss}_{C_{\cal M}/{\cal M}}(Y,c)$
	 of $Z$-semistable Fourier-Mukai transforms
	 of dimension $1$ and width $[0]$
	  from nodal curves in the family $C_{\cal M }/{\cal M}$
	  to $Y$ of fixed twisted central charge $c$ is compact.
  For the application in the sequel D(10.2),
   $C_{\cal M}/{\cal M}$ will contain
    $C_{\overline{\cal M }_g}/\overline{\cal M}_g$ as a substack  and
	$\smallFM^{1,[0];\tinyZss}_{C_{\cal M}/{\cal M}}(Y,c)$
       in this case will play a key role in defining stability conditions
	   for morphisms from arbitrary Azumaya nodal curves
	    (with the underlying nodal curves not necessary
     		in the family $C_{\cal M}/{\cal M}$)
       to $Y$.

 \bigskip
	
\noindent{\it Remark 0.1.}{\it $[$general Fourier-Mukai transform$]$.}	
 We should remark that
 it is very natural to anticipate
  a generalized setting and compactness result of the current note D(10.1),
  in which
   general Fourier-Mukai transforms (cf.\ [Hu]) associated to
      objects in the bounded derived category $D^b(\Coh(C\times Y))$
      of coherent sheaves are considered   and
   the general stability conditions from the works [Bay], [Br], [G-K-R], [Ru],
   a generalization of the Kleiman's Boundedness Criterion ([Kl])
     and of the related inequalities that bounds $h^0$ by slopes ([LP], [Ma2], [Si]),
    and the result of valuative criterion from
	Dan Abramovich and Alexander Polishchuk [A-P]  and Jason Lo [Lo1], [Lo2]
	 are taken as the foundation.

\bigskip

\bigskip

\noindent
{\bf Convention.}
 Standard notations, terminology, operations, facts in
  (1) string theory/D-branes, supersymmetry;$\,$
  (2) stacks;$\,$
  (3) Fourier-Mukai transforms;$\,$
  (4) moduli spaces of sheaves$\,$
 can be found respectively in$\,$
  (1) [Po1], [Po2], [Bac], [Jo]; [Ar], [W-B];$\,$
  (2) [L-MB];$\,$
  (3) [Hu]
  (4) [H-L].
 \begin{itemize}
  \item[$\cdot$]
   All varieties, schemes and their products are over ${\Bbb C}$;
   a `{\it curve}' means a $1$-dimensional proper scheme over ${\Bbb C}$.

  \item[$\cdot$]
   The `{\it support}' $\Supp({\cal F})$
    of a coherent sheaf ${\cal F}$ on a scheme $Y$
    means the {\it scheme-theoretical support} of ${\cal F}$
   unless otherwise noted;
   ${\cal I}_Z$ denotes the {\it ideal sheaf} of
    a subscheme of $Z$ of a scheme $Y$.

  \item[$\cdot$]
   The `{\it polarization class group}' of a curve $C$
   means the same as
   the `{\it degree class group}' $\DCG(C)$ of $C$.
   The former emphasizes that its elements are represented
     by ample line bundles
   while the latter emphasizes its discrete nature.

  \item[$\cdot$]
   For the projection maps
     $\pr_1:C\times Y\rightarrow C$ and $\pr_2:C\times Y\rightarrow Y$,
   $\pr_1^{\ast}L$ (resp.\ $\pr_2^{\ast}(B+\sqrt{-1}J)$)
    may be denoted still by $L$ (resp.\ $B+\sqrt{-1}J$)	
    for the simplicity of expressions when there is no chance of confusion.
  Here, $L$	is a line bundle or a line bundle class on $C$ and
    $B+\sqrt{-1}J$ is a complexified K\"{a}hler class on $Y$.
	
  \item[$\cdot$]
   A curve class in $A_1(C\times Y)$ and its image in $N_1(C\times Y)_{\Bbb Z}$
   are denoted the same when we need the latter.
  
  \item[$\cdot$]
  {\it Central charge functional} $Z$
    vs.\ {\it variety/scheme} or {\it cycle/cycle class} $Z$.

  \item[$\cdot$]
  {\it Line bundle} or {\it line-bundle class} $L$
   vs.\  {\it calibrated cycle} $L$,
            (especially, for special {\it L}agrangian).

  \item[$\cdot$]
   The word `{\it twist}' in this note means an alteration by a line-bundle,
   except in Remark~1.3, where it means an effect due to $B$-field.
   (The two concepts are completely different.)

  \item[$\cdot$]
   The current note continues the study in
    [L-L-S-Y] (arXiv:0809.2121 [math.AG], D(2)).
   A partial review of D-branes and Azumaya noncommutative geometry
    is given in [L-Y3] (arXiv:1003.1178 [math.SG], D(6)) and
    [Liu$_{CH}$] (arXiv:1112.4317 [math.AG]).
   Notations and conventions follow these early works when applicable.
 \end{itemize}

\bigskip

\bigskip

\begin{flushleft}
{\bf Outline.}
\end{flushleft}
\nopagebreak
{\small
\baselineskip 12pt  
\begin{itemize}
 \item[0.]
  Introduction.

 \item[1.]
  D-string world-sheet instantons,
  morphisms from Azumaya nodal curves with a fundamental module, and
  Fourier-Mukai transforms from nodal curves.
  \begin{itemize}
   \item[{\boldmath $\cdot$}]
    D-branes and morphisms from Azumaya spaces with a fundamental module.

   \item[{\boldmath $\cdot$}]
    Morphisms from Azumaya schemes with a fundamental module\\
         versus Fourier-Mukai transforms.
		
   \item[{\boldmath $\cdot$}]
    Stable morphisms from Azumaya nodal curves with a fundamental module\\
         as D-string world-sheet instantons.
  \end{itemize}

 \item[2.]
  Twisted central charges and stability conditions on Fourier-Mukai transforms
  from nodal curves.
  \begin{itemize}
   \item[2.1]
     Twisted central charges of D-string world-sheet instantons.
	 \begin{itemize}
	  \item[{\boldmath $\cdot$}]
       Central charges of D-branes in a Calabi-Yau $3$-fold and why we need a twist.
	
	  \item[{\boldmath $\cdot$}]
       Twisted central charges of D-string world-sheet instantons.
	
	  \item[{\boldmath $\cdot$}]
       Basic properties.
     \end{itemize}
	
   \item[2.2]
    Stability conditions on Fourier-Mukai transforms
     of dimension $1$ and width $[0]$ from nodal curves.
	 \begin{itemize}
	  \item[{\boldmath $\cdot$}]
	   Stability conditions defined by a central charge functional $Z$.
	
	  \item[{\boldmath $\cdot$}]
	   The Harder-Narasimhan filtration with respect to $Z$.

	  \item[{\boldmath $\cdot$}]
	   Jordan-H\"{o}lder filtrations and $S$-equivalence with respect to $Z$.
	 \end{itemize}
	
   \item[2.3]
    A chamber structure on the space of  stability conditions.
	\begin{itemize}
	 \item[{\boldmath $\cdot$}]
	  The space of stability conditions $\Stab^{1,[0]}(C\times Y)$.
	
	 \item[{\boldmath $\cdot$}]
	  Walls and chambers on $\Stab^{1,[0]}(C\times Y)$.
	
	 \item[{\boldmath $\cdot$}]
	   Local finiteness of walls.
	
	 \item[{\boldmath $\cdot$}]
	   Behavior of the moduli stack of stable objects when crossing an actual wall:
       a conjecture.
	\end{itemize}
  \end{itemize}

 \item[3.]
  Compactness of the moduli stack
	$\smallFM^{1,[0];\tinyZss}_{C_{\cal M}/{\cal M}}(Y,c)$
   of $Z$-semistable Fourier-Mukai transforms.
   \begin{itemize}
     \item[{\boldmath $\cdot$}]
	  Boundedness of
      $\FM^{1,[0];\scriptsizeZss}_{C_{\cal M}/{\cal M}}(Y;c)$.
	
     \item[{\boldmath $\cdot$}]
	  Completeness of $\FM^{1,[0];\scriptsizeZss}_{C_{\cal M}/{\cal M}}(Y;c)$.
   \end{itemize}
 %
 %
 %
 %
 %
\end{itemize}
} 

\newpage

\section{D-branes, morphisms from Azumaya spaces with a fundamental module,
				  Fourier-Mukai transforms, and D-string world-sheet instantons}

In this section,
we review tersely how we come to this to set up basic terminologies and notations
 and bring out the notion of `D-string world-sheet instantons'.	
Readers are referred to
  [L-Y1] (D(1)),  [L-L-S-Y] (D(2))
    for more thorough related discussions,
  [L-Y3] (D(6)) and [Liu$_{CH}$] for a review of the project up to D(9.1),  and
  [Hu] for Fourier-Mukai transforms.

\bigskip

\begin{flushleft}
{\bf D-branes and morphisms from Azumaya spaces with a fundamental module.}
\end{flushleft}
The open-string-induced matrix-valued-type enhancement of the scalar fields
  on a D-brane world-volume $X$  for coincident D-branes in a space-time $Y$
 that describe deformations of the brane world-volume in $Y$
 motivated string-theorists,
   Pei-Ming Ho and Yong-Shi Wu [H-W] in particular,
 to propose a fundamental matrix-type noncommutative geometry
  on the D-brane world-volume
  (i.e.\ D-branes as `quantum space' in the language of [H-W]);
see also related discussion of Michael Douglas [Dou1].
This appearance of matrix-type noncommutative structure
   on the D-brane world-volume $X$
   ({\it rather than} directly on the target space-time $Y$)
  turns out to be also a most natural interpretation of what's going on
  from Grothendieck's viewpoint of algebraic geometry and
  his theory of schemes and morphisms between them.
A D-brane in this content is then described by a morphism from
 a scheme with a matrix-type noncommutative structure sheaf
 (i.e. an Azumaya scheme $(X,{\cal O}_X^{A\!z})$)
 together with a fundamental ${\cal O}_X^{A\!z}$-module ${\cal E}$
 to $(Y, {\cal O}_Y)$, where ${\cal O}_Y$ is the structure sheaf of $Y$
  in either commutative or noncommutative setting; in notation/symbol,
 $$
   \varphi\; :\;  (X,{\cal O}_X^{A\!z}; {\cal E})\;
     \longrightarrow\;   (Y,{\cal O}_Y)\,,
 $$
 with a built-in isomorphism
   ${\cal O}_X^{A\!z}\simeq \Endsheaf_{X}({\cal E})$.
In true contents, this means
 a contravariant gluing system of ring-homomorphisms
 $$
    {\cal O}_X^{A\!z}\;
	   \longleftarrow\; {\cal O}_Y\; :\; \varphi^{\sharp}\,,
$$
which in general {\it does not} induce any morphisms directly
   from $X$ to $Y$.
It is through $\varphi^{\sharp}$ that
 the ${\cal O}_X^{A\!z}$-module ${\cal E}$
 can be pushed forward to an ${\cal O}_Y$-module,
 in notation $\varphi_{\ast}{\cal E}$, on $Y$.
Despite the language difference and different level of developments,
 the same idea applies to
  nonsupersymmetric D-branes,
 (supersymmetric) D-branes of  B-type
     (cf.\ the above setting in algebraic geometry),   and
 (supersymmetric) D-branes of A-type ([L-Y4] (D(7))).

\bigskip

\begin{flushleft}
{\bf Morphisms from Azumaya schemes with a fundamental module\\
         versus Fourier-Mukai transforms.}
\end{flushleft}
When the target space $Y$ is a commutative scheme
  and ${\cal E}_X$ is locally free ${\cal O}_X$-module,
 then 
  associated to a morphism  
  $\varphi :(X,{\cal O}_X^{A\!z};{\cal E})
                                               \rightarrow (Y,{\cal O}_Y)$
  is the following diagram 
  $$
   \xymatrix{
   {\cal O}_X^{A\!z}= \Endsheaf{\cal E}\\
        {\cal A}_{\varphi}\;:=\,
         		\Image\varphi^{\sharp}\ar@{^{(}->}[u]  
                   				&&& {\cal O}_Y\ar[lll]_-{\varphi^{\sharp}}\\
   {\cal O}_X\rule{0ex}{3ex}  \ar@{^{(}->}[u]								&&&&,
   } 
  $$
  which defines a subscheme 
    $X_{\varphi}:= \boldSpec{\cal A}_{\varphi}\subset X\times Y$
	together with a coherent sheaf $\tilde{\cal E}_{\varphi}$ 
	 supported on $X_{\varphi}$, 
   which is simply the ${\cal O}_X^{A\!z}$-module ${\cal E}$ 
    regarded as an ${\cal A}_{\varphi}$-module.
 $\tilde{\cal E}_{\varphi}$ is called the {\it graph} of  the morphism $\varphi$.
It is a coherent sheaf on $X\times Y$
 that is flat over $X$, of relative dimension $0$.
Conversely, given such a coherent sheaf on $X\times Y$,
 a morphism $\varphi:(X,{\cal O}_X^{A\!z};{\cal E})\rightarrow Y$
 can be constructed from $\tilde{\cal E}$ by taking 
 \begin{itemize}
  \item[$\cdot$]
    ${\cal E}=\pr_{1\,\ast}\tilde{\cal E}$,
	
  \item[$\cdot$] 	
   ${\cal O}_X^{A\!z}=\Endsheaf_{{\cal O}_X}({\cal E})$,  and

  \item[$\cdot$]   
   $\varphi^{\sharp}:{\cal O}_Y\rightarrow {\cal O}_X^{A\!z}\,$ 
   is defined by the composition  
   $$
     {\cal O}_Y 
        \xrightarrow{\hspace{1em}pr_2^{\sharp}\hspace{1em}} 
      {\cal O}_{X\times Y}
        \xrightarrow{\hspace{1.2em}\iota^{\sharp}\hspace{1.2em}} 
		{\cal O}_{Supp(\tilde{\cal E})}\;
		\hookrightarrow\; {\cal O}_X^{A\!z}\,.
   $$
 \end{itemize}  
Here, 
  $X\xleftarrow{\pr_1} X\times Y \xrightarrow{\pr_2} $  
      are the projection maps, 
  $\iota:\Supp(\tilde{\cal E})\rightarrow X\times Y$	  
    is the embedding of the subscheme,   and 
note that $\Supp(\tilde{\cal E})$ is affine over $X$.
The subseries D(10) studies issues in the special case $X$ is a nodal curve
 parameterized by a compact Artin stack ${\cal M}$.

Resume the general discussion.
Treating $\tilde{\cal E}$ as an object in the bounded derived category
 $D^b(\Coh(X\times Y))$ of coherent sheaves on $X\times Y$,
 $\tilde{\cal E}$ defines a Fourier-Mukai transform
 $\Phi_{\tilde{\cal F}} : D^b(\Coh(X))\rightarrow D^b(\Coh(Y))$,
 in short name, a {\it Fourier-Mukai transform from $X$ to $Y$}.
In this way, the data that specifies a morphism
 $\phi:(X,{\cal O}_X^{A\!z};{\cal E})\rightarrow Y$
 is matched to a data that specifies a special kind of Fourier-Mukai transform.

\bigskip

\begin{sdefinition}
 {\bf [support, dimension, width of Fourier-Mukai transform].} {\rm
For a general $\tilde{\cal F}^{\bullet}\in D^b(\Coh(X\times Y))$,
 we define
   the {\it (scheme-theoretical) support} $\Supp(\tilde{\cal F})$
       of  $\tilde{\cal F}^{\bullet}$
     to be the (scheme-theoretical) support of
	   $\oplus_iH^i(\tilde{\cal F}^{\bullet})$,
   the {\it dimension} $\dimm\tilde{\cal F}^{\bullet}$
      of $\tilde{\cal F}^{\bullet}$
     to be the dimension $\dimm(\Supp({\cal F}^{\bullet}))$,  and
   the {\it width} of  $\tilde{\cal F}^{\bullet}$ to be the interval
      $[i, j]$ such that
	    $H^i(\tilde{\cal F}^{\bullet})\ne 0$,
	    $H^j(\tilde{\cal F}^{\bullet})\ne 0$,  	and
		$H^k(\tilde{\cal F}^{\bullet})= 0$,  for $k\notin [i,j]$.
 We'll denote the width $[i,i]$ by $[i]$.		
}\end{sdefinition}

\bigskip

\noindent
Thus, for $X$ fixed of pure dimension $d$,
   the stack of morphisms $(X,{\cal O}_X^{A\!z};{\cal E})\rightarrow Y$
   is embedded in the stack of Fourier-Mukai transforms from $X$ to $Y$
   of dimension $d$ and width $[0]$;
 the latter is identical to the stack of $d$-dimensional  coherent sheaves on $X\times Y$.
Similar statement holds for $X$ not fixed.

\bigskip

After the above review, let us turn to the focus of this subseries D(10):
{\it The case of D1-branes (i.e.\ D-strings)}.

\bigskip

\bigskip

\begin{flushleft}
{\bf Stable morphisms from Azumaya nodal curves with a fundamental module\\
         as D-string world-sheet instantons.}
\end{flushleft}
An {\it instanton} in a field theory is by definition a field configuration
  that is localized both in space and in time.
Thus, when an (Euclidean) D$p$ -brane world-volume wraps around a $(p+1)$-cycle
  in a Calabi-Yau $3$-fold, it looks like an instanton in the $4$-dimensional  effective field theory
  from the compactification of a Type II superstring theory on that Calabi-Yau $3$-fold.
Due to its origin, it is called a {\it D-brane world-volume instanton}
  in the $4$-dimensional field theory.
(See e.g.,
    [B-B-S] for various dimensional brane world-volume instantons  and
	[B-C-K-W] for a review of D-instantons in various contents.)
Recall now how the {\it Gromov-Witten theory of stable maps} from nodal curves
    (resp.\ bordered Riemann surface)
      to a Calabi-Yau space $Y$
	(resp.\ a Calabi-Yau space $Y$ with a calibrated or holomorphic cycle $L\subset Y$)
    is related to string-theorists'
	 {\it (fundamental) closed (resp.\ open) string world-sheet instantons},
	   e.g.\ [C-K], [C-dlO-G-P], [D-S-W-W], [K-K-L-MG], [K-L], [Liu$_{CC}$] and [O-V].
Now that we have realized the moving of a D-string in a space-time
 as a morphism from an Azumaya nodal curve with a fundamental module
  (i.e.\ Azumaya Riemann surface with nodes
            in the complex geometry language) to that space-time,
it is very natural to regard
 a {\it stable morphism} from an Azumaya nodal curve with a fundamental module
  to a Calabi-Yau space $Y$ as giving a {\it D-string world-sheet instanton}
  of the $4$-dimensional effective field theory,
 if the notion of `stable morphism' can be defined appropriately for our objects.
(See Remark~1.2 below for more detailed explanations.)
Once that is achieved in such a way that the corresponding moduli space
 of stable morphisms in our contents behaves good enough to have
 a reasonable tangent-obstruction theory or its extension
  as long as any kind of intersection theory
   or theory of constructible functions can apply,
  then one can define and compute in good cases
  (a version of) `{\it D-string world-sheet instanton numbers}'
 exactly as the moduli stack of stable maps in Gromov-Witten theory
  did for the fundamental closed or open string.
This is the topic of D(10).
It turns out that `{\it stable D-branes}' in string-theory contents
   as referred to whether they can decay or not
   has to do with  the notion of {\it (BPS) central charges of D-branes}.
Furthermore, besides this crucial notion from superstring theory,
 when one allows the topology of a D-string world-sheet to vary
  under deformations,
additional mathematical issues come in if one wants to obtain
 a compact moduli stack of stable morphisms.
The latter issue forces us to separate the D-string world-sheet
 to a major subcurve and and a minor subcurve
 in such a way that  the former with the fundamental module in general
  is pushed forward under the morphism to
  a  $1$-dimensional coherent sheaf on the target Calabi-Yau space $Y$
 while the latter consists of a collection of ${\Bbb P^1}$-trees
  and, with the fundamental module, is pushed forward
  only to a $0$-dimensional coherent sheaf on $Y$.
The former carries the main information of central charge
 while the latter is to be constrained by hand
 by requiring good properties on the induced morphism
 of related moduli stacks.
(See [L-Y5] for details).
This leads us to construct first an {\it auxiliary moduli stack},
 i.e.\ {\it the moduli stack of semistable (with respect to a central charge)
 Fourier-Mukai transforms of dimension $1$ and width $[0]$
  from a compact family of nodal curves to the Calabi-Yau manifold $Y$}.
These two sub-topics,
  central charges and the auxiliary moduli stack,
 are the focus of this note D(10.1).
In particular, we prove that such stacks are compact and, hence,
  provide us with a collection of reasonable reference stacks to begin with,
(Sec.~3, Theorem~3.1).


\bigskip

\begin{sremark}  {$[$where is the special connection?$]$.}  {\rm
 D-brane (or M-brane) instantons are generally more complicated
  than (fundamental) open or closed string world-sheet instantons,
  since a D-brane or M-brane carries more structure thereupon.
 The most basic such structure  for D-branes (in the simplest case)
   is an (open-string-induced ) bundle with a gauge connection.
 Besides the D-brane world-volume instantons addressed above,
   a D-brane of dimension $\ge 3$ (i.e.\ world-volume of dimension $\ge 4$ )
   can wrap around a cycle of an internal Calabi-Yau $3$-space
   to create another copy of $d=4$ effective space-time.
 Dimension reduction of the full (open-string-induced) field theory
   on the D-brane world-volume
   thus creates another sector to the complete $4$-dimensional effect field theory
   from the compactification of a $10$-dimensional Type II superstring theory.
 Quantity from this section in general may be subject to
   open-string world-sheet instanton corrections.
 This sector contains, in particular, a $4$-dimensional gauge theory,
  which may itself has  instanton solutions (in the sense of a gauge theory).
 All these different types of $4$-dimensional instantons arising from D-branes
 say the importance of special gauge connections in the definition of
  D-brane instantons in whatever contents.
 Thus, alert string-theorists may legitimately question
   our algebro-geometric inclined formalism:
   \begin{itemize}
    \item[{\bf Q.}]  \parbox[t]{37em}{\it
	 One has the bundle ${\cal E}$ on the D-string world-sheet,   but
	 where is the connection on ${\cal E}$ in this setting
	  that, as for any supersymmetric D-brane,
	     must satisfy supersymmetry-induced equations of motion?}
   \end{itemize}
 To answer this, one has to reason as follows.
  In our data, we specify only holomorphic structures on ${\cal E}$.
 In general there are nonunique connections on ${\cal E}$
  that are compatible with the given holomorphic structure on ${\cal E}$.
  {\it Stability condition} on the morphism comes in
    to distinguish string-theory-allowed ${\cal E}$
	while, at  the same time, select a unique compatible connection on it
	through a  Donaldson-Uhlenbeck-Yau-type Theorem (cf.\ [Don], [Ja], [Le], [U-Y]),
  though the latter cannot be spelled out literally yet in most cases.
 Thus, we have the following correspondence

 \bigskip

 {\small
  $$
   \begin{array}{cclccl}
     \mbox{\boldmath $\cdot$}
	  & \parbox[t]{14em}{Azumaya curve\\[.4ex] with a fundamental module\\[.6ex]
	                                          $(C,{\cal O}_C^{A\!z}; {\cal E})$   }
	     &&&&  \parbox[t]{21em}{(Euclidean)  D-string {\it world-sheet}\\[.4ex]
                                     		             with open-string-induced {\it structures}} \\[9ex]
	\mbox{\boldmath $\cdot$}
      & \parbox[t]{14em}{morphism to Calabi-Yau space\\[.6ex]
            $\varphi:(C,{\cal O}_C^{A\!z}; {\cal E})
			    \longrightarrow Y$ }
		&&   \Longleftrightarrow
        && \parbox[t]{21em}{{\it wrapping} of (Euclidean)
		           D-string world-sheet\\[.4ex] on a holomorphic $1$-cycle in $Y$}    \\[7ex]
	\mbox{\boldmath $\cdot$}
	 & \parbox[t]{14em}{stability condition on $\varphi$}
	    &&&&  \parbox[t]{21em}{distinguished holomorphic bundle\\[.4ex]
		              with a {\it unique special connection} thereupon\\[.4ex]  on D-string world-sheet}\,,
                                                    														                      \\[9ex]
   \end{array}
   $$
   }

   \noindent
   which would then justify a stable morphism in our content as a D-string world-sheet instanton.
 This is the same working philosophy behind many literatures on D-branes of B-type.
 }\end{sremark}

\bigskip

\begin{sremark}
{$[$effect of $B$-field$\,]$.}   {\rm
 Alert readers who bridge well between mathematics and string theory
   will notice immediately an incompleteness of our setting in this note:
 While we take into account the effect  of a $B$-field to the central charge
     of D-branes,
   we completely ignore its effect to the twisting of the Chan-Paton sheaf
     on the D-brane.
 With the language of {\it twisted sheaves}
    (cf.\ [C\u{a}] of Andrei C\u{a}ld\u{a}raru),
  D-branes in a space-time with a $B$-field background can be formulated
   as a morphism from a general Azumaya space $(X,{\cal  O}_X^{A\!z})$
     -- whose associated class in the Brauer group $\Br(X)$ of $X$ is non-zero --
	 with a compatible {\it twisted fundamental
	 ${\cal O}_X^{A\!z}$-module} $ {\cal E}$ ([L-Y2] (D(5)));  and
 the above two effects to D-branes (in particular, to D-string world-sheet instantons)
   can then be taken care of simultaneously.
 For the current note, we follow
  the TASI 2003 Lecture Notes of Paul Aspinwall [As]
   to take the former effect into account but suppress the latter for simplicity.
 This definitely leaves room for future works toward a complete treatment.
}\end{sremark}

\bigskip

\bigskip

\section{Twisted central charges and stability conditions on Fourier-Mukai transforms
                  from nodal curves}

In this section
 we define the stability conditions associated to central charges we will use in our problem,
 and study their basic properties and the chamber structure in the space of stability conditions.
A conjecture is made  on the wall-crossing behavior of the related moduli space.

\bigskip
		
\subsection{Twisted central charges of D-string world-sheet instantons}

We recall in this subsection
 the origin of (BPS) central charge of a D-brane in a Calabi-Yau $3$-fold
  in superstring theory
 and then explain why we need a twist of it in our problem.
After that,
 we define precisely the central charge functional to be used for our problem
 and study its basic properties.

\bigskip

\begin{flushleft}
{\bf Central charges of D-branes in a Calabi-Yau $3$-fold and why we need a twist.}
\end{flushleft}
The $d=4$, $N=2$ supersymmetry algebra  ${\cal A}$ (over ${\Bbb C}$)
 has a $1$-dimensional center.  Let $Z$ be a generator of this center.
Then, for an irreducible representation ${\cal H}$  of ${\cal A}$,
 $Z$ acts on ${\cal H}$  as $c\cdot \Id_{\cal H}$,
   where $c\in {\Bbb C}$ is a constant  and
              $\Id_{\cal H}$ is the identity map on ${\cal H}$.
$c$ is called the {\it central charge of the representation}.
We also say that the elements in the representation ${\cal H}$
 carry a central charge $c$.
When the Type IIA or the Type IIB $10$-dimensional superstring theory
 is compactified on a general Calabi-Yau $3$-fold $Y$,
 i.e.\ the whole space-time becomes now ${\Bbb R}^{3+1}\times Y$,
the resulting effective field theory on the $4$-dimensional Minkowski space-time
 ${\Bbb R}^{3+1}$ has a $d=4$, $N=2$ supersymmetry
 ${\cal A}_{d=4,N=2}$.
When one adds a D-brane $X$ (with structures thereupon kept implicit)
 to the internal Calabi-Yau $3$-fold $Y$ over a $p\in {\Bbb R}^{3+1}$,
$X$ would be effectively a pointlike particle sitting at $p$
 from the ${\Bbb  R}^{3+1}$ aspect.
When $X$ in ${\Bbb R}^{3+1}\times Y$
 evolves along with time, we obtain then an embedding of the world-volume
 $X\times {\Bbb R}$ of the D-brane in ${\Bbb R}^{3+1}\times Y$.
{From} the effective $4$-dimension aspect, this is an embedding
 ${\Bbb R}^{0+1}\hookrightarrow {\Bbb R}^{3+1}$
 of the world-line of a particle in the Minkowski space-time.
In general, such a configuration in a compactification of a superstring theory
 renders no supersymmetry left in the $4$-dimensional effective theory.
For a special D-brane (i.e.\ those wrapping a calibrated or holomorphic cycle,
   as is in our case),
 this will reduce the previous $d=4$, $N=2$ supersymmetry only to
 $d=4$, $N=1$ supersymmetry in the $4$-dimensional effective theory
 and the corresponding particle is thus a {\it BPS particle}, by definition.
In terms of quantum mechanics of a particle in ${\Bbb R}^{3+1}$,
 it corresponds to a state in an irreducible representation ${\cal H}$ of
 ${\cal A}_{d=4,N=2}$ whose annihilator ${\cal A}_0$
  is a $d=4$, $N=1$ subalgebra of ${\cal A}_{d=4,N=2}$.
The central charge $c$ of ${\cal H}$ (or equivalently this BPS state)
 is defined to be the {\it central charge of the D-brane} $X$
 in the Calabi-Yau $3$-fold $Y$. $\; c$ determines ${\cal A}_0$.
This is the $4$-dimensional effective space-time aspect
 of the central charge of  a D-brane $X$ in $Y$.

{From} the superstring world-sheet aspect,
  the nonlinear $\sigma$-model on a closed string world-sheet $\Sigma_{closed}$
  with target a general Calabi-Yau $3$-fold $Y$
  defines a $d=2$, $N=(2,2)$ superconformal field theory
  on $\Sigma_{closed}$.
When one adds a D-brane $X$ to $Y$ and requires the open-string
 to have their end-points lying in $X$,
the resulting $d=2$ open-string world-sheet theory in general has no longer
  any supersymmetry.
Again, for a special D-brane $X$,
 the nonlinear $\sigma$-model with target $(Y,X)$ gives
 a $d=2$, $N=(1,1)$ superconformal field theory
 on the open string world-sheet
 $(\Sigma_{open},\partial\Sigma_{open})$.
As an abstract open-and-closed conformal field theory,
 a $d=2$, $N=(1,1)$ superconformal field theory with boundary
 is obtained from a $d=2$, $N=(2,2)$ superconformal field theory
 by specifying how its holomorphic sector and its antiholomorphic sector
  should be matched along the boundary $\partial\Sigma_{open}$.
The rule is specified exactly by the D-brane $X$
 (i.e.\ a boundary condition to the $d=2$ theory) and
the phase of the central charge of $X$ enters the rule.
With more technicality, the central charge of the D-brane $X\subset  Y$
  can be reproduced from the expansion of the boundary state
   $|B_X\rangle$ associated to $X$ in terms of  Ishibashi states
   in the $d=2$ superconformal field theory.
This gives a meaning of the (BPS) central charge of a D-brane
 from the open-string world-sheet aspect.

Mathematicians are referred to
 [Ar], [A-B-C-D-G-K-M-S-S-W], [B-B-S], [H-K-K-P-T-V-V-Z], and [O-O-Y]
 for more detailed explanations.

\bigskip

Back to the mathematical world,
Given
 a Calabi-Yau $3$-fold $Y$ with a complexified K\"{a}hler class $B+\sqrt{-1}J$
 and a D-brane of B-type, realized as a coherent sheaf ${\cal F}$
 (or more generally an object in the derived category $D^b(\Coh(Y))$
    of coherent sheaves) on $Y$,
 after several string-theorists' works,
   e.g.\ [As], [C-Y], [F-W], [G-H-M], [Harv], [M-M], [O-O-Y],
 the formula of the central charge of D-branes in this case
 is given by
 $$
   Z^{B+\sqrt{-1}J}({\cal F})\;
     =\;    \int_Y  e^{-(B+\sqrt{-1}J)}
	                        \ch({\cal F})\sqrt{\td(T_Y)}\;
			 +\; O(\alpha^{\prime}) \,,
 $$
 where $O(\alpha^{\prime})$ is a stringy correction to $Z({\cal F})$
   that tends to zero as $\alpha^{\prime}\rightarrow 0$
   (i.e.\ fundamental string tension $l_s\rightarrow \infty$).
Naively, in our situation we have a coherent $\tilde{\cal F}$  on $C\times Y$
 and one would like to define the central charge of $\tilde{\cal F }$ to be
 $$
   Z^{B+\sqrt{-1}J}_{\scriptsizenaive}({\cal F})\;
     =\;    \int_{C\times Y}
	              \pr_2^{\ast}
                          \left(
					         \frac{e^{-(B+\sqrt{-1}J)}}
                                    {\sqrt{\td(T_Y)}}
                          \right)
                  \tau_{C\times Y}(\tilde{\cal F})\;
			 +\; O(\alpha^{\prime}) \,.
 $$
 When $\Supp(\tilde{\cal F})$ defines an embedding
  $\iota: C\hookrightarrow Y$,
 then
  $Z^{B+\sqrt{-1}J}_{\scriptsizenaive}(\tilde{\cal F})
   = Z^{B+\sqrt{-1}J} (\iota_{\ast}{\cal E})$,
  where ${\cal E}:= \pr_{1,\ast}\,\tilde{\cal F}$.
However,
  e.g.\ by considering the case $Y= S \times E$,
      where $S$ is an algebraic $\Kthree$-surface
        	  that contains an embedded ${\Bbb P^1 }$ and
    $E$ is a smooth curve of genus $1$,
it can happen that under a deformation of $\tilde{\cal F}$,
 $\tilde{\cal F}$ is deformed to
 $\tilde{\cal F}^{\prime}
   = \tilde{\cal F}^{\prime}_1 + \tilde{\cal F}^{\prime}_2$
   such that
         $\Supp(\tilde{\cal F}^{\prime}_1)
		     \cap \Supp(\tilde{\cal F}^{\prime}_2) = \emptyset$
       and that $\pr_{2\ast}\,\tilde{\cal F}^{\prime}_2$
	                 is $0$-dimensional.
In this case, any coherent sheaf on $\Supp(\tilde{\cal F}^{\prime}_2)$
 is $Z^{B+\sqrt{-1}J}_{\scriptsizenaive}$-semistable
and, hence, the moduli stack
 of $1$-dimensional $Z^{B+\sqrt{-1}J}_{\scriptsizenaive}$-semistable
 coherent sheaves on $C\times Y$ of the fixed central charge
 won't be bounded in general.
To cure this, one needs to recover the positivity of $-\Imaginary Z$
 by introducing a twist from a positive degree class on $C$.

\bigskip

\begin{flushleft}
{\bf Twisted central charges of D-string world-sheet instantons.}
\end{flushleft}
Let
  $C$ be a nodal curve with a polarization class $L$  and
  $Y$ be a projective Calabi-Yau manifold with a complexified K\"{a}hler class
    $B+\sqrt{-1}J$.
	
\bigskip
				
\begin{definition}
{\bf [twisted central charge of Fourier-Mukai transform].} {\rm
 Let
   $\tilde{\cal F}$ be a coherent sheaf of dimension $1$ on $C\times Y$   and
   $\Phi_{\tilde{\cal F}}$ be the Fourier-Mukai transform $\tilde{\cal F}$ defines.
 Then, the {\it twisted central charge} of $\Phi_{\tilde{\cal F}}$
  associated to the data $(B+\sqrt{-1}J,L)$ is defined to be
  $$
   Z^{B+\sqrt{-1}J, L}(\Phi_{\tilde{\cal F}})\;
   :=\; Z^{B+\sqrt{-1}J, L}(\tilde{\cal F)}\;
   :=\;   \int_{C\times Y}\,
                \pr_2^{\ast}\left(
                    \frac{e^{-(B+\sqrt{-1}J)}}{\sqrt{td(T_Y)}}
                                          \right)\,
                \cdot\, \pr_1^{\ast}\, e^{-\sqrt{-1}L}\,
                \cdot\, \tau_{C\times Y}(\tilde{\cal F})\,,
  $$
  where  $\tau_{C\times Y}(\tilde{\cal F}):= \ch(\tilde{\cal F })
                    \cdot \td(T_{C\times Y})$ is the $\tau$-class of $\tilde{\cal F}$.
} \end{definition}		

\bigskip

\begin{lemma}
 {\bf [twisted central charge: explicit form].}
  Continuing the above notation.
  Let
   $$
     \tilde{\beta}(\tilde{\cal F})\; :=\;  \sum_i d_i[\zeta_i]\; \in\;  A_1(C\times Y)\,,
   $$
    where
	  $\zeta_i$ runs through the generic points of $\Supp(\tilde{\cal F})$   and
      $d_i$  is the dimension of $\tilde{\cal F}|_{\zeta_i}$
		   as a $k_{\zeta_i}$-vector space.
  Then,
   $$
     Z^{B+\sqrt{-1}J,L}(\tilde{\cal F})\;
      =\;  \left(\chi(\tilde{\cal F})-B\cdot\tilde{\beta}(\tilde{\cal F})\right)\,
 	        -\,\sqrt{-1}\,\left((J+L)\cdot \tilde{\beta}(\tilde{\cal F})\right)\,.
   $$
  In particular, for non-zero coherent sheaves on $C\times Y$ of dimension $\le 1$,
   $Z^{B+\sqrt{-1}J,L}$ takes its values
    in the partially completed lower-half complex plane
   $$
     \hat{\Bbb H}_-\;
	 :=\; \{ z\in {\Bbb C}\,|\,
                   \mbox{either $\Imaginary z <0$
		                          or $\Imaginary z=0$ with $\Real z >0$}\, \}\,.
   $$
\end{lemma}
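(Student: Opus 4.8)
The plan is to compute the integral in Definition~2.1.5 term by term, using the standard Grothendieck--Riemann--Roch bookkeeping together with the fact that $\tilde{\cal F}$ has dimension $\le 1$, which forces $\tau_{C\times Y}(\tilde{\cal F})$ to live only in the top two degrees relevant to the integral. First I would expand the Chern character $\ch(\tilde{\cal F})$ of a dimension-$\le 1$ sheaf on the threefold $C\times Y$: writing $d=\dim(C\times Y)=4$ (or lower), the only components of $\ch(\tilde{\cal F})\cdot\td(T_{C\times Y})$ that survive integration against a class whose nonzero parts lie in degrees $0$, $2$, $4,\dots$ are the degree-$3$ part (a curve class) and the degree-$4$ part (a point class, with coefficient the Euler characteristic $\chi(\tilde{\cal F})$). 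Concretely, $\int_{C\times Y}\tau_{C\times Y}(\tilde{\cal F})|_{\deg 4}=\chi(\tilde{\cal F})$ by Hirzebruch--Riemann--Roch, and the degree-$3$ part of $\tau_{C\times Y}(\tilde{\cal F})$ is precisely $\tilde{\beta}(\tilde{\cal F})=\sum_i d_i[\zeta_i]$ by the definition of $\tilde\beta$ (the fundamental-cycle component of the Chern character of a dimension-$1$ sheaf, with the Todd correction contributing nothing in that degree since $\td_0=1$).

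Next I would analyze the two multiplicative factors $\pr_2^\ast\!\bigl(e^{-(B+\sqrt{-1}J)}/\sqrt{\td(T_Y)}\bigr)$ and $\pr_1^\ast e^{-\sqrt{-1}L}$. Since $\dim C=1$, the class $\pr_1^\ast e^{-\sqrt{-1}L}=1-\sqrt{-1}\,\pr_1^\ast L$ has only a degree-$0$ and a degree-$2$ part. Similarly $\pr_2^\ast\!\bigl(e^{-(B+\sqrt{-1}J)}/\sqrt{\td(T_Y)}\bigr)=1-(B+\sqrt{-1}J)+(\text{higher degree in }Y)$, where the ``$1$'' in degree $0$ uses that the degree-$0$ part of $1/\sqrt{\td(T_Y)}$ is $1$ and that there is no degree-$2$ contribution from $\td(T_Y)$ on a Calabi-Yau (where $c_1(T_Y)=0$), so the degree-$2$ part of this factor is exactly $-(B+\sqrt{-1}J)$. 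Now pair these against the surviving pieces of $\tau_{C\times Y}(\tilde{\cal F})$: the degree-$4$ piece of $\tau$ must be multiplied by the degree-$0$ parts of both factors, giving $\chi(\tilde{\cal F})$; the degree-$3$ piece $\tilde\beta$ must be multiplied by a total degree-$1$ class, which can only be $-\pr_2^\ast(B)-\sqrt{-1}\pr_2^\ast(J)-\sqrt{-1}\pr_1^\ast L$ (there being no odd-degree classes on the smooth factors in a complexified Kähler setting, so no further cross terms). Collecting and using the Convention that $\pr_i^\ast$ is suppressed, the integral becomes $\chi(\tilde{\cal F})-B\cdot\tilde\beta(\tilde{\cal F})-\sqrt{-1}\bigl((J+L)\cdot\tilde\beta(\tilde{\cal F})\bigr)$, which is the asserted formula.

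For the final assertion about $\hat{\Bbb H}_-$, I would argue as follows. If $\tilde{\cal F}\ne 0$ has dimension exactly $1$, then $\tilde\beta(\tilde{\cal F})$ is a nonzero effective curve class on $C\times Y$; since $J$ is a Kähler class and $L$ is a polarization (ample) class, $(J+L)\cdot\tilde\beta(\tilde{\cal F})>0$, so $\Imaginary Z^{B+\sqrt{-1}J,L}(\tilde{\cal F})<0$. If $\tilde{\cal F}$ has dimension $0$ (and is nonzero), then $\tilde\beta(\tilde{\cal F})=0$, so $Z^{B+\sqrt{-1}J,L}(\tilde{\cal F})=\chi(\tilde{\cal F})=h^0(\tilde{\cal F})>0$, which is real and positive; hence $Z$ lands in $\hat{\Bbb H}_-$ in either case. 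The main obstacle I anticipate is not any single computation but rather keeping the degree bookkeeping honest: one must be careful that $C$ being a \emph{nodal} (hence possibly singular and reducible) curve does not affect the argument --- it does not, since GRR and HRR hold for proper schemes over ${\Bbb C}$ and the cycle $\tilde\beta$ is defined intrinsically via generic points --- and one must correctly identify which Todd-class corrections on $C\times Y$ can and cannot contribute in the two relevant degrees. The Calabi-Yau hypothesis ($c_1(T_Y)=0$) is what cleanly kills the would-be degree-$2$ correction from $\sqrt{\td(T_Y)}$ and keeps the real part equal to $\chi(\tilde{\cal F})-B\cdot\tilde\beta$ rather than acquiring an extra curvature term; I would flag that as the one place where the geometric hypothesis is genuinely used.
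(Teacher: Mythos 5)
Your proposal is correct and follows essentially the same route as the paper: the paper's proof consists precisely of observing that the (functorial) $\tau$-class of a dimension-$\le 1$ sheaf decomposes as $\tau_{C\times Y}(\tilde{\cal F})=\tilde{\beta}(\tilde{\cal F})+Z_0$ with $Z_0\in A_0$ of degree $\chi(\tilde{\cal F})$, after which the pairing with the expanded factors (and the Calabi-Yau vanishing of the degree-two part of $\sqrt{\td(T_Y)}$, cf.\ Remark~2.1.3) gives the formula, exactly as you carry out in more detail. The only caveat is that since $C\times Y$ is singular, the justification of the two facts you use should be phrased via the Baum--Fulton--MacPherson functorial $\tau$-class (as the paper does) rather than a naive $\ch\cdot\td$ Hirzebruch--Riemann--Roch on $C\times Y$, but this is exactly the point you flag and it does not affect the argument.
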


\begin{proof}
 The design of the functorial $\tau$-class $\tau(\,\cdot\,)$ for coherent sheaves
   on singular varieties to fit well with the Grothendieck-Riemann-Roch Theorem and
 the behavior of $\tau$-class in a flat family imply that
 $$
  \tau_{C\times Y}(\tilde{\cal F})\; =\;
    \tilde{\beta}(\tilde{\cal F})+ Z_0
 $$
 with $Z_0\in A_0(C\times Y)$ of degree $\chi(\tilde{\cal F})$.
 The lemma follows.
\end{proof}

\bigskip

\begin{remark}{\rm
 {\it $[$general case and stringy correction$]$.}
 Though our main interest motivated from superstring theory is the case $Y$
   is a Calabi-Yau $3$-fold.
 For a general $Y$ with $c_1(Y)\ne 0 $ and of general dimensions,
  the expression for $Z^{B+\sqrt{-1}J,L}$ in Lemma~2.1.2
  is modified to
  $$
    Z^{B+\sqrt{-1}J,L}(\tilde{\cal F})\;
     =\;  \left(
	           \chi(\tilde{\cal F})
	           -(B+\frac{1}{4}c_1(Y))\cdot\tilde{\beta}(\tilde{\cal F})
		    \right)\,
 	      -\,\sqrt{-1}\,\left((J+L)\cdot \tilde{\beta}(\tilde{\cal F})\right)\,.
  $$
 Purely mathematically, this may be taken as the starting point of the current note.
 All the statements remain to hold after being mildly modified accordingly if necessary.
 Furthermore, when the stringy correction $O(\alpha^{\prime})$ to the central charge
  $Z^{B+\sqrt{-1}J,L}$  is taken into account,
  the evaluation of differential $2$-forms on (real $2$-)cycles in the above expression
  should be replaced by a quantum evaluation from Gromov-Witten theory.
}\end{remark}

\bigskip

\begin{definition}
 {\bf [$Z$-slope $\mu^Z$].} {\rm
 Continuing Definition~2.1.1.
 We define the {\it $Z$-slope} for a non-zero coherent sheaf $\tilde{\cal F}$ on $C\times Y$
  of dimension $\le 1$ to be
  $$
   \begin{array}{crl}
      \mu^Z(\tilde{\cal F})
       &  :=
	   &  - \left.
	          \Real \left(Z^{B+\sqrt{-1}J,L}(\tilde{\cal F})\right)
	          \right /
		      \Imaginary\left(Z^{B+\sqrt{-1}J,L}(\tilde{\cal F})\right)   \\[1.6ex]
     & = 	
	   & \left.
      	     \left(\chi(\tilde{\cal F})-B\cdot\tilde{\beta}(\tilde{\cal F})\right)
		     \right/
 	         \left((J+L)\cdot \tilde{\beta}(\tilde{\cal F})\right)\,.
   \end{array}
  $$
}\end{definition}

\bigskip

\begin{flushleft}
{\bf Basic properties.}
\end{flushleft}
We collect here a few basic properties of a central charge functional $Z^{B+\sqrt{-1}J}$
 we need for later discussions.

\bigskip

\begin{lemma}
{\bf [invariant of flat family].}
 Let
  $(C_S,L_S)$ be a flat family of nodal curves with a polarization class over $S$
   and
  $\tilde{\cal F}_S$ be a coherent sheaf on $C_S\times Y$
    that is flat and of relative dimension $1$ over $S$.
 Then $Z^{B+\sqrt{-1}J, L_s}(\tilde{\cal F}_s)\in {\Bbb C}$, $s\in S$,
   is locally constant on $S$. 	
\end{lemma}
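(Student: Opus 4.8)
The plan is to reduce the statement to the explicit formula for $Z^{B+\sqrt{-1}J,L}$ established in Lemma~2.1.2 and then invoke the deformation-invariance of the two numerical invariants appearing there, namely the Euler characteristic $\chi$ and the intersection products $B\cdot\tilde\beta$, $(J+L)\cdot\tilde\beta$. Concretely, by that lemma we may write, for each $s\in S$,
\[
 Z^{B+\sqrt{-1}J, L_s}(\tilde{\cal F}_s)\;
  =\;\left(\chi(\tilde{\cal F}_s)-B\cdot\tilde\beta(\tilde{\cal F}_s)\right)
     -\sqrt{-1}\,\left((J+L_s)\cdot\tilde\beta(\tilde{\cal F}_s)\right),
\]
so it suffices to show that the real number $\chi(\tilde{\cal F}_s)$ and the class $\tilde\beta(\tilde{\cal F}_s)\in N_1(C_s\times Y)$ (paired against the fixed classes $B$, $J$, and the relatively-flat class $L_s$) are locally constant on $S$.

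First I would handle $\chi$. Since $\tilde{\cal F}_S$ is flat over $S$, the function $s\mapsto\chi(\tilde{\cal F}_s)=\sum_i(-1)^i\dim_{k(s)}H^i((C_s\times Y),\tilde{\cal F}_s)$ is locally constant on $S$ by the standard semicontinuity/flatness package for proper morphisms (the relevant proper morphism being $C_S\times Y\to S$, which is proper because $C_S/S$ is a family of curves, hence proper, and $Y$ is projective). Next I would handle the curve class. Working locally on a connected base, I would use that $\tilde{\cal F}_S$, being flat of relative dimension $1$, has a well-defined relative fundamental cycle whose fibrewise restrictions are exactly the cycles $\tilde\beta(\tilde{\cal F}_s)=\sum_i d_i[\zeta_i]$ of the lemma; flatness forces these cycles to have constant multiplicities $d_i$ along $S$ and to fit into a cycle on $C_S\times Y$ flat over $S$, so their classes in $N_1$ form a locally constant section. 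Pairing with the fixed ample/complexified-K\"ahler classes pulled back from $Y$, and with $L_S$ (which is a polarization class on the family, hence its restrictions $L_s$ vary in a locally constant way in $N^1(C_s)$), then gives locally constant real numbers $B\cdot\tilde\beta(\tilde{\cal F}_s)$ and $(J+L_s)\cdot\tilde\beta(\tilde{\cal F}_s)$. Assembling the real and imaginary parts, $Z^{B+\sqrt{-1}J,L_s}(\tilde{\cal F}_s)$ is locally constant on $S$.

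The main obstacle — and the only point that needs genuine care rather than citation — is the constancy of the \emph{curve class} $\tilde\beta$ in a flat family whose total space $C_S\times Y$ has singular (nodal, times $Y$) fibres over $S$ only in the $C_S$-direction, so that one cannot naively invoke a smooth-fibration argument. The clean way around this is precisely the functoriality of the $\tau$-class used in the proof of Lemma~2.1.2: the behavior of $\tau$ in a flat family, already invoked there, says $\tau_{C_s\times Y}(\tilde{\cal F}_s)=\tilde\beta(\tilde{\cal F}_s)+Z_0(s)$ with $\deg Z_0(s)=\chi(\tilde{\cal F}_s)$, \emph{and} that the total $\tau_{C_S\times Y/S}(\tilde{\cal F}_S)$ restricts compatibly to the fibres; combined with the constancy of $\chi$ this pins down $\tilde\beta(\tilde{\cal F}_s)$ up to the (locally constant) identification of $N_1(C_s\times Y)$ across the family. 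Thus the whole lemma is really just ``Lemma~2.1.2 plus flatness of $\chi$ and of the $\tau$-class'', and I would present it in that order: (i) reduce to the explicit formula; (ii) constancy of $\chi$ via flatness; (iii) constancy of the intersection numbers $B\cdot\tilde\beta$, $(J+L_s)\cdot\tilde\beta$ via the relative $\tau$-class; (iv) conclude.
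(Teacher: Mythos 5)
Your proposal is correct and takes essentially the same route as the paper: reduce to the explicit form of $Z^{B+\sqrt{-1}J,L}$ from Lemma~2.1.2, note that $\chi(\tilde{\cal F}_s)$ is constant by flatness, and observe that the $\tilde\beta(\tilde{\cal F}_s)$ fit into a flat family of $1$-cycles on $(C_S\times Y)/S$ (the paper sees this by restricting to the generic points of the support, you via the relative $\tau$-class, which amounts to the same input), so their pairings with $B$, $J$, $L_S$ are locally constant. No gap to report.
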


\begin{proof}
 By considering the restriction of the coherent sheaf $\tilde{\cal F}_S$ to generic points
  of $\Supp(\tilde{\cal F}_S)$,
 one observes that, under flat deformations of $(C,L, \tilde{\cal F})$,
   $\tilde{\beta}(\,\bullet\,)$  form a flat family of $1$-cycles on the family $(C_S\times Y)/S$ .
 Since, in addition, $\chi(\,\bullet\,)$  are constant, the lemma follows.

\end{proof}

\bigskip

\begin{lemma}
 {\bf [additivity].}
  Let
    $0 \rightarrow  \tilde{\cal F}_1 \rightarrow \tilde{\cal F}_2
	     \rightarrow  \tilde{\cal F}_3 \rightarrow 0$
   be a short exact sequence of coherent sheaves on $C\times Y$ of dimension $\le 1$.
 Then,
   $$
     Z^{B+\sqrt{-1}J,L}(\tilde{\cal F}_2)\;
      =\;  Z^{B+\sqrt{-1}J,L}(\tilde{\cal F}_1)\,
			 +\,  Z^{B+\sqrt{-1}J,L}(\tilde{\cal F}_3)\,.
   $$
\end{lemma}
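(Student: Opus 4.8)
The plan is to reduce the additivity of $Z^{B+\sqrt{-1}J,L}$ to the additivity of its two ``coordinates,'' namely the Euler characteristic $\chi$ and the one-cycle class $\tilde{\beta}$, using the explicit formula from Lemma~2.1.2. Indeed, that lemma tells us
$$
Z^{B+\sqrt{-1}J,L}(\tilde{\cal F})\;=\;\left(\chi(\tilde{\cal F})-B\cdot\tilde{\beta}(\tilde{\cal F})\right)\,-\,\sqrt{-1}\,\left((J+L)\cdot\tilde{\beta}(\tilde{\cal F})\right)\,,
$$
which is manifestly ${\Bbb Z}$-linear (indeed ${\Bbb R}$-linear) in the pair $\bigl(\chi(\tilde{\cal F}),\tilde{\beta}(\tilde{\cal F})\bigr)$, since $B$, $J$, $L$ are fixed. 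So it suffices to show that, for a short exact sequence $0\rightarrow\tilde{\cal F}_1\rightarrow\tilde{\cal F}_2\rightarrow\tilde{\cal F}_3\rightarrow 0$ of coherent sheaves of dimension $\le 1$ on $C\times Y$, one has $\chi(\tilde{\cal F}_2)=\chi(\tilde{\cal F}_1)+\chi(\tilde{\cal F}_3)$ and $\tilde{\beta}(\tilde{\cal F}_2)=\tilde{\beta}(\tilde{\cal F}_1)+\tilde{\beta}(\tilde{\cal F}_3)$ in $A_1(C\times Y)$.

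First I would dispose of the $\chi$-additivity: this is the standard fact that Euler characteristic of coherent sheaves on a proper scheme over ${\Bbb C}$ is additive on short exact sequences, coming from the long exact cohomology sequence (the alternating sum of dimensions of cohomology groups telescopes). Next, for the $\tilde{\beta}$-additivity, I would argue via the generic-point description: $\tilde{\beta}(\tilde{\cal F})=\sum_i d_i[\zeta_i]$ where $\zeta_i$ ranges over the (at most $1$-dimensional) generic points of the support and $d_i=\dimm_{k_{\zeta_i}}\tilde{\cal F}|_{\zeta_i}$. Localizing the exact sequence at each such generic point $\zeta$ of the union of supports, flatness-free exactness of localization gives a short exact sequence of finite-length (in fact finite-dimensional) $k_\zeta$-vector spaces, whence $\dimm_{k_\zeta}(\tilde{\cal F}_2)_\zeta=\dimm_{k_\zeta}(\tilde{\cal F}_1)_\zeta+\dimm_{k_\zeta}(\tilde{\cal F}_3)_\zeta$; summing over the $1$-dimensional generic points with these multiplicities gives $\tilde{\beta}(\tilde{\cal F}_2)=\tilde{\beta}(\tilde{\cal F}_1)+\tilde{\beta}(\tilde{\cal F}_3)$. (Equivalently, and more uniformly, I would invoke that $\tilde{\beta}(\,\cdot\,)$ is exactly the degree-$1$ part of the $\tau$-class appearing in the proof of Lemma~2.1.2, and the $\tau$-class — being built from a Chern character on the Grothendieck group — is additive on short exact sequences; the cleanest writeup probably uses this $K$-theoretic formulation directly.) Combining the two additivities with the ${\Bbb R}$-linearity of the explicit formula yields the claim.

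The only point requiring a little care — the ``main obstacle,'' such as it is — is making sure the generic points being summed over are compatibly indexed for the three sheaves: a priori $\Supp(\tilde{\cal F}_1)$ and $\Supp(\tilde{\cal F}_3)$ are subschemes of $\Supp(\tilde{\cal F}_2)$, so some $1$-dimensional components of $\Supp(\tilde{\cal F}_2)$ may fail to be components of one of the others. This is handled by the convention that $d_i=0$ when $\tilde{\cal F}_j|_{\zeta_i}$ vanishes, i.e.\ one sums over the finite set of all $1$-dimensional generic points of $\Supp(\tilde{\cal F}_2)$ uniformly and reads off the multiplicities by localization; with that convention the componentwise identity $d_i(\tilde{\cal F}_2)=d_i(\tilde{\cal F}_1)+d_i(\tilde{\cal F}_3)$ holds at every such $\zeta_i$ and the sum is additive. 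One should also remark that the dimension-$\le 1$ hypothesis guarantees there are no higher-dimensional generic points contributing, so $\tilde{\beta}$ really lives in $A_1(C\times Y)$ and the bookkeeping is finite; this is exactly parallel to the reasoning already used in the proofs of Lemma~2.1.3 and Lemma~2.1.2, so I would simply cite those for the setup and keep this proof to a couple of lines.
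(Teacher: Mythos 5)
Your proposal is correct and follows essentially the same route as the paper: the paper's own proof consists precisely of the observation that the $\tau$-class is additive on short exact sequences, equivalently that $\chi(\,\cdot\,)$ and $\tilde{\beta}(\,\cdot\,)$ are additive, combined with the linearity of the explicit formula of Lemma~2.1.2 in the pair $(\chi,\tilde{\beta})$. You merely spell out the routine details (long exact cohomology sequence for $\chi$, exactness of localization and length/multiplicity additivity at the $1$-dimensional generic points for $\tilde{\beta}$), which the paper leaves implicit.
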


\begin{proof}
 The $\tau$-class is additive with respect to a short exact sequence.
 In the explicit form,
  both $\tilde{\beta}(\,\bullet\,)$ and $\chi(\,\bullet\,)$
  are additive with respect to a short exact sequence.

\end{proof}

\bigskip

\begin{lemma}
 {\bf [finite set of corresponding Hilbert polynomials].}
  Given
      a bounded flat family $(C_S,L_S)$ of nodal curves with a polarization class over $S$,
  let
    $c\in  \hat{\Bbb H}_-$,
     $H$ be a relative hyperplane class on $(C_S\times Y)/S$  and
	 $ \Poly^H(Z^{B+\sqrt{-1}J,L}=c)$
	  be the set of Hilbert polynomials of coherent sheaves $\tilde{\cal  F}$ of dimension $\le 1$
	     on fibers of $(C_S\times Y)/S$
       	 with $Z^{B+\sqrt{-1}J,L}(\tilde{\cal F})=c$.
 Then,  $ \Poly^H(Z^{B+\sqrt{-1}J,L}=c)$  is a finite set.
\end{lemma}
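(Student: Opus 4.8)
The plan is to translate the equation $Z^{B+\sqrt{-1}J,L}(\tilde{\cal F})=c$ into two numerical constraints on $\tilde\beta(\tilde{\cal F})$ and $\chi(\tilde{\cal F})$ and then to control these by boundedness. First I would observe that, since $\tilde{\cal F}$ has dimension $\le 1$, its Hilbert polynomial with respect to $H$ is the linear polynomial $P^H_{\tilde{\cal F}}(m)=\bigl(H\cdot\tilde\beta(\tilde{\cal F})\bigr)m+\chi(\tilde{\cal F})$, the leading coefficient coming from the $1$-dimensional part and the constant term being the total Euler characteristic. Hence $P^H_{\tilde{\cal F}}$ is determined by the integer pair $\bigl(H\cdot\tilde\beta(\tilde{\cal F}),\chi(\tilde{\cal F})\bigr)$, and it suffices to show that only finitely many such pairs occur among the sheaves $\tilde{\cal F}$ of dimension $\le 1$ on fibers of $(C_S\times Y)/S$ with $Z^{B+\sqrt{-1}J,L}(\tilde{\cal F})=c$.

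Next I would use the explicit form of $Z^{B+\sqrt{-1}J,L}$ (Lemma~2.1.2): the condition $Z^{B+\sqrt{-1}J,L}(\tilde{\cal F})=c$ is equivalent to
$$
  \chi(\tilde{\cal F})-B\cdot\tilde\beta(\tilde{\cal F})=\Real c
  \qquad\text{and}\qquad
  (J+L)\cdot\tilde\beta(\tilde{\cal F})=-\Imaginary c\,,
$$
with $-\Imaginary c\ge 0$ since $c\in\hat{\Bbb H}_-$. The class $A:=\pr_1^{\ast}L+\pr_2^{\ast}J$ is ample on each fiber $C_s\times Y$ (a product polarization: $L$ is ample on the nodal curve $C_s$ and $J$ is an ample K\"{a}hler class on $Y$), hence relatively ample on $(C_S\times Y)/S$; the second constraint says exactly that the effective $1$-cycle $\tilde\beta(\tilde{\cal F})$ has $A$-degree equal to the fixed number $-\Imaginary c$. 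Because the family $(C_S,L_S)/S$ is bounded and $Y$ is projective, the effective $1$-cycles on fibers of $(C_S\times Y)/S$ of that fixed $A$-degree are parametrized by the corresponding relative Chow variety, which is of finite type over ${\Bbb C}$ and hence has only finitely many connected components; the numerical class of the parametrized cycle is locally constant on it. Therefore $H\cdot\tilde\beta(\tilde{\cal F})$ and $B\cdot\tilde\beta(\tilde{\cal F})$ take only finitely many values; feeding the latter into the first constraint, $\chi(\tilde{\cal F})=\Real c+B\cdot\tilde\beta(\tilde{\cal F})$ also takes only finitely many values, so the pair $\bigl(H\cdot\tilde\beta(\tilde{\cal F}),\chi(\tilde{\cal F})\bigr)$ ranges over a finite set, which is what we want.

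The one step that is not mere bookkeeping — and the part I expect to demand the most care — is the boundedness input: the implication ``$\tilde\beta(\tilde{\cal F})$ has bounded $A$-degree'' $\Rightarrow$ ``only finitely many numerical classes $\tilde\beta(\tilde{\cal F})$ arise.'' To justify it rigorously one should replace $S$ by a finite-type model (legitimate precisely because $(C_S,L_S)/S$ is assumed bounded), use that $Y$ is projective so that the relative Chow variety of $1$-cycles of bounded degree over a finite-type base is itself of finite type over ${\Bbb C}$, and invoke that the numerical class of a cycle is locally constant in a flat family of cycles. Everything else is just the explicit formula for $Z^{B+\sqrt{-1}J,L}$ together with the shape of the Hilbert polynomial of a coherent sheaf supported in dimension $\le 1$.
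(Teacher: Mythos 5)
Your reduction is the same as the paper's: the Hilbert polynomial of a sheaf of dimension $\le 1$ is determined by the pair $\bigl(H\cdot\tilde\beta(\tilde{\cal F}),\chi(\tilde{\cal F})\bigr)$, and $Z^{B+\sqrt{-1}J,L}(\tilde{\cal F})=c$ unpacks into the two real constraints $\chi-B\cdot\tilde\beta=\Real c$ and $(J+L)\cdot\tilde\beta=-\Imaginary c$. The divergence, and the gap, is in the finiteness input. You fix the degree of the effective $1$-cycle $\tilde\beta(\tilde{\cal F})$ against $A=\pr_1^{\ast}L+\pr_2^{\ast}J$ and then invoke ``the corresponding relative Chow variety'' of cycles of that fixed $A$-degree. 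But $J$, hence $A$, is only a real K\"{a}hler class; Chow varieties (and their finite-typeness over a finite-type base) are available for cycles of bounded degree with respect to an integral relatively ample class such as $H$. To use them you must first show that among effective $1$-cycles $E$ on fibers with $(J+L)\cdot E=-\Imaginary c$ the integer $H\cdot E$ is bounded, uniformly over $S$ -- and this comparison between the real class $J+L$ and the integral class $H$ is precisely the mathematical heart of the lemma, not a routine citation. The paper supplies it by embedding $C_S\times Y$ into ${\Bbb P}^N_S$ via $H$ and applying Kleiman's ampleness criterion to the relative cone $\overline{\NE}_1((C_S\times Y)/S)$; equivalently, one can argue that $(J+L)-\epsilon H$ remains nef on all fibers for some uniform $\epsilon>0$ (openness of the ample/K\"{a}hler cone plus boundedness of the family), so that $H\cdot E\le \epsilon^{-1}\,(J+L)\cdot E$. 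As written, your appeal to a Chow variety ``of fixed $A$-degree'' tacitly assumes this step.

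Once that comparison is in place your argument closes, and in fact the Chow-variety detour becomes unnecessary: bounded $H$-degree together with the fact that effective classes lie in the lattice of numerical classes of the fibers (with the closed effective cone containing no line -- again Kleiman) already yields finitely many possible numerical classes $\tilde\beta(\tilde{\cal F})$, hence finitely many values of $H\cdot\tilde\beta$ and of $B\cdot\tilde\beta$, hence finitely many $\chi=\Real c+B\cdot\tilde\beta$, exactly as you conclude. So the skeleton of your proof matches the paper's; what is missing is an explicit, family-uniform positivity comparison between $(J+L)$ and $H$, which is where the hypothesis that $(C_S,L_S)/S$ is bounded actually gets used (compare the compactness argument in the paper's proof of the $P$-slope versus $Z$-slope lemma).
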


\begin{proof}
 Recall that the Hilbert polynomial defined by $H$(in variable $m$)
  of a coherent sheaf on fibers of $(C_S\times Y)/S$ of dimension $\le 1$ is given by
   $$
	  P^H(\tilde{\cal F})\;
       =\;  (H\cdot\tilde{\beta}(\tilde{\cal F}))m+\chi(\tilde{\cal F})\,.
   $$
 Let $\iota_S: C_S\times Y\hookrightarrow{\Bbb P}^N_S$ be an $S$-embedding
   of $C_S\times Y$ into the projective-space bundle ${\Bbb P}^N_S$ over $S$
   determined by $H$.
 Then
   $N_1({\Bbb P}^N_S/S)_{\Bbb Z}\simeq {\Bbb P}^N_S\times {\Bbb Z}$
    is a trivial bundle of free abelian group of rank $1$, equipped with a canonical trivialization
	from the generating effective curve class in fibers of ${\Bbb P}^N_S/S$.
 Let
  $\underline{\iota}_{S,\ast}:N_1((C_S\times Y)/S)_{\Bbb Z}\rightarrow {\Bbb Z}$
     be the composition
     $N_1((C_S\times Y)/S)_{\Bbb Z}\rightarrow N_1({\Bbb P}^N_S/S)_{\Bbb Z}
      \rightarrow 	{\Bbb Z}$
	of the induced bundle homomorphism  and the projection map.
 Consider the subset of effective curve classes in fibers of $(C_S\times Y)/S$
   $$
    \Xi_ {(C_S\times Y)/S, 1}^{J+L,c}\ :=\;
	     \left\{
	        E\in \overline{\NE}_1((C_S\times Y)/S)_{\Bbb Z}\;|\;
				  (J+L)\cdot E\,=\, -\Imaginary c
		 \right\}
   $$
 Then, it follows from Kleiman's Ampleness Criterion that
    $\underline{\iota}_{S,\ast}\big(\Xi_{(C_S\times Y)/S, 1}^{J+L, c}\big)$
 	is a finite subset of ${\Bbb Z}$.
 Consequently,
  both the set
    $$
	  \left \{H\cdot \tilde{\beta}(\tilde{\cal F})\,
	     \left|\,
          \parbox{20em}
		  {$\tilde{\cal F}$ is a coherent sheaf on fibers of $(C_S\times Y)/S$ \\
	             of dimension $\le 1$ with $Z^{B+\sqrt{-1}J,L}(\tilde{\cal F})=c$}
		\right.
	  \right\}\; \subset\;  {\Bbb Z}
	$$
	and the set
    $$
	  \left \{\pr_{2,\ast}(\tilde{\beta}(\tilde{\cal F}))\,
	     \left|\,
          \parbox{20em}
		  {$\tilde{\cal F}$ is a coherent sheaf on fibers of $(C_S\times Y)/S$ \\
	             of dimension $\le 1$ with $Z^{B+\sqrt{-1}J,L}(\tilde{\cal F})=c$}
		\right.
	  \right\} \;
	  \subset\;  N_1(Y)_{\Bbb Z}
	$$
   are finite.
 It follows that
 the set
   $$
	  \left \{B\cdot \tilde{\beta}(\tilde{\cal F})\,
	     \left|\,
          \parbox{20em}
		  {$\tilde{\cal F}$ is a coherent sheaf on fibers of $(C_S\times Y)/S$ \\
	             of dimension $\le 1$ with $Z^{B+\sqrt{-1}J,L}(\tilde{\cal F})=c$}
		\right.
	  \right\} \;
	  \subset\;  {\Bbb R}
	$$
 and, hence, the set
     $$
	  \left \{\chi(\tilde{\cal F})\,
	     \left|\,
          \parbox{20em}
		  {$\tilde{\cal F}$ is a coherent sheaf on fibers of $(C_S\times Y)/S$ \\
	             of dimension $\le 1$ with $Z^{B+\sqrt{-1}J,L}(\tilde{\cal F})=c$}
		\right.
	  \right\} \;
	  \subset\;  {\Bbb Z}
	$$
  are also finite.
 This proves the lemma.

\end{proof}

\bigskip

The following lemma relates $\mu^Z$ with the more familiar $\mu^P$:

\bigskip

\begin{lemma}
 {\bf [$P$-slope vs.\ $Z$-slope].}
  Let
   $L$ be a relative polarization class on the universal curve
     $C_{\cal  M}/{\cal M}$ and
   $H$ be a relative hyperplane class
      on $(C_{\cal M}\times Y)/{\cal M}$.
  Then
   there exist constants $c_1,\; c_2,\, c_3,\, c_4\in {\Bbb R}$,  with $c_1,\,c_3>0$,
    that depend only on $B+\sqrt{-1}J$, $L$, and $H$
  such that
   $$
     c_1\, \mu^P (\tilde{\cal F})+c_2\;
	   \le\;  \mu^Z(\tilde{\cal F})\;
	   \le \; c_3\,\mu^P(\tilde{\cal F}) + c_4\,
  $$	
  for all nodal curves $C$ in the family $C_{\cal M}/{\cal M}$ and
       all $1$-dimensional coherent sheaves $\tilde{\cal F}$ on $C\times Y$.
  Here
   $$
     P(\tilde{\cal F})\;
	  =\; (H\cdot \tilde{\beta}(\tilde{\cal F}))m \,+\, \chi(\tilde{\cal F})
   $$
       is the Hilbert polynomial (in $m$)of $\tilde{\cal F}$ defined by $H$,
   $$
     \mu^P (\tilde{\cal F})\;
	  :=\;  \chi(\tilde{\cal  F})/(H\cdot \tilde{\beta}(\tilde{\cal F}))
   $$
     is the slope of $\tilde{\cal  F}$ defined by $P$.
\end{lemma}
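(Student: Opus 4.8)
The plan is to reduce the whole statement to a single identity between the two slopes and then to control its coefficients uniformly over the family. Write $\beta := \tilde{\beta}(\tilde{\cal F})\in A_1(C\times Y)$ for the effective $1$-cycle class attached to $\tilde{\cal F}$, and abbreviate $\pr_1^{\ast}L+\pr_2^{\ast}J$ by $J+L$. Using the explicit form of $Z^{B+\sqrt{-1}J,L}$ from Lemma~2.1.2, every $1$-dimensional coherent sheaf $\tilde{\cal F}$ on a fibre $C\times Y$ satisfies
$$
  \mu^Z(\tilde{\cal F})\;=\;\frac{\chi(\tilde{\cal F})-B\cdot\beta}{(J+L)\cdot\beta}\;=\;r(\beta)\,\mu^P(\tilde{\cal F})\,-\,s(\beta)\,,\qquad r(\beta):=\frac{H\cdot\beta}{(J+L)\cdot\beta}\,,\quad s(\beta):=\frac{B\cdot\beta}{(J+L)\cdot\beta}\,.
$$
Both $H$ and $J+L$ restrict to ample classes on every fibre $C\times Y$, so $H\cdot\beta>0$ and $(J+L)\cdot\beta>0$ for the nonzero effective class $\beta$, and all quantities above make sense. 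Thus the lemma follows once one knows that $r(\beta)$ stays between two strictly positive constants and that $|s(\beta)|$ stays bounded, uniformly in the nodal curve $C$ of the family $C_{\cal M}/{\cal M}$ and in the effective class $\beta$.

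First I would bound $s$. Since $J$ is a K\"ahler class on the fixed $Y$, there is $M_0>0$ with $M_0 J\pm B$ again K\"ahler; pulling back by $\pr_2$ these are nef on $C\times Y$, and since $\pr_1^{\ast}L$ is nef one gets $\pm(B\cdot\beta)\le M_0(\pr_2^{\ast}J\cdot\beta)\le M_0\big((J+L)\cdot\beta\big)$, i.e.\ $|s(\beta)|\le M_0$, with $M_0$ depending only on $B+\sqrt{-1}J$. Next, and this is the heart of the matter, I would bound $r(\beta)$ away from $0$ and $\infty$ uniformly over the family. Because ${\cal M}$ is a compact, hence quasi-compact and of finite type, Artin stack, the relative N\'eron--Severi group $N^1((C_{\cal M}\times Y)/{\cal M})$ has finite rank and carries a relative Mori cone $\overline{\NE}_1((C_{\cal M}\times Y)/{\cal M})$; by construction $H$ and $J+L$ lie in the relative ample cone, which is open. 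Hence there are $0<a\le A$ for which $H-a(J+L)$ and $A(J+L)-H$ are relatively nef, and evaluating on the effective class $\beta$ in any fibre gives $a\le r(\beta)\le A$. Concretely one may instead split $\beta$ into its horizontal part $\sum_i m_i[C_i\times\pt]$ ($m_i\ge 0$, over the components $C_i$ of $C$) and its vertical part $\delta\in\overline{\NE}_1(Y)$: the finitely many ratios $\deg(H|_{C_i\times\pt})/\deg(L|_{C_i})$ are bounded as $C$ ranges over the proper family, the ratio of the two ample classes in the $Y$-direction is bounded by Kleiman's Ampleness Criterion on $Y$, and the mediant inequality combines the two into the same $a,A$.

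Finally, with $a\le r(\beta)\le A$ and $|s(\beta)|\le M_0$, the two asserted affine bounds follow by elementary estimation of $\mu^Z=r(\beta)\mu^P-s(\beta)$, separating the cases $\chi(\tilde{\cal F})\ge 0$ and $\chi(\tilde{\cal F})<0$ and then enlarging the constants so that one pair of affine functions works on all of ${\Bbb R}$; in the natural case where $H$ is taken proportional to $\pr_1^{\ast}L+\pr_2^{\ast}J$ (a legitimate choice of relative hyperplane class) the factor $r(\beta)$ is even constant and the estimate is immediate. All four constants depend only on $B+\sqrt{-1}J$, $L$ and $H$ together with the fixed compact family $C_{\cal M}/{\cal M}$.

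I expect the genuinely nontrivial point to be the uniformity in the second step: that a single pair $(a,A)$ can be chosen for every, possibly reducible, nodal curve $C$ in the family. This is precisely what compactness of ${\cal M}$ provides — the relative ample and Mori cones over ${\cal M}$ are honest finite-dimensional objects, so Kleiman's criterion applies in the relative setting and the component-wise slopes $\deg(H|_{C_i\times\pt})/\deg(L|_{C_i})$ cannot run off to $0$ or $\infty$ — whereas for an unbounded family of curves no such uniform comparison of $\mu^Z$ with $\mu^P$ could exist.
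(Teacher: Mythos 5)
Your route is essentially the paper's: you rewrite $\mu^Z(\tilde{\cal F})=r(\beta)\,\mu^P(\tilde{\cal F})-s(\beta)$ with $\beta:=\tilde{\beta}(\tilde{\cal F})$, $r(\beta)=(H\cdot\beta)/((J+L)\cdot\beta)$, $s(\beta)=(B\cdot\beta)/((J+L)\cdot\beta)$, and then bound these two ratios uniformly over the compact family; the paper's proof is exactly this, stated as the fact that the two functions $B\cdot(\,\bullet\,)/((J+L)\cdot(\,\bullet\,))$ and $H\cdot(\,\bullet\,)/((J+L)\cdot(\,\bullet\,))$ on ${\Bbb P}(\overline{\NE}_1((C_S\times Y)/S))$, $S$ an atlas of ${\cal M}$, have bounded image, resp.\ image in a compact subset of ${\Bbb R}_{>0}$. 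Your nef/ample-cone and component-wise justifications of $a\le r(\beta)\le A$ and $|s(\beta)|\le M_0$ are fine and in fact more explicit than the paper's one-line appeal to compactness.

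The one step you should not claim as stated is the final merging: if $a<A$, i.e.\ $r(\beta)$ genuinely varies among realized classes, then no single pair $(c_1,c_2)$ with $c_1>0$ satisfies $c_1\mu^P+c_2\le\mu^Z$ for \emph{all} $1$-dimensional sheaves. Indeed, for a fixed class $\beta$ one can twist sheaves supported on a curve in that class so that $\chi\to\pm\infty$ with $r(\beta)$ fixed, and letting $\mu^P\to+\infty$ forces $c_1\le r(\beta)$ while $\mu^P\to-\infty$ forces $c_1\ge r(\beta)$; so $c_1$ would have to equal every realized value of $r$. What your uniform bounds actually yield is the envelope $\min(a\mu^P,A\mu^P)-M_0\le\mu^Z\le\max(a\mu^P,A\mu^P)+M_0$, i.e.\ affine bounds whose slope depends on the sign of $\mu^P$ (or equality of slopes in your special case $H$ proportional to $\pr_1^{\ast}L+\pr_2^{\ast}J$). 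This imprecision is, however, already present in the lemma's own statement and in the paper's ``The lemma follows''; the envelope form is all that is needed downstream (Proposition~3.4 only uses that an upper bound on $\mu^Z$ bounds $\mu^P$ from above), so apart from this shared overstatement your argument reproduces the paper's proof faithfully and with more detail.
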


\begin{proof}
 Let $S$ be an atlas of ${\cal M}$.
 Under the same setting as in the proof of Lemma~2.1.7,
   $$
    \frac{B\cdot(\,\bullet\,)}{(J+L)\cdot(\,\bullet\,)}\;:\;
	  {\Bbb P}(\overline{\NE}_1((C_S\times Y)/S))\; \longrightarrow\; {\Bbb R}
   $$
   has image in a bounded subset of ${\Bbb R}$
  while
   $$
    \frac{H\cdot(\,\bullet\,)}{(J+L)\cdot(\,\bullet\,)}\;:\;
	 {\Bbb P}(\overline{\NE}_1((C_S\times Y)/S))\; \longrightarrow\; {\Bbb R}_{>0}
   $$
   has image in a compact subset of ${\Bbb R}_{>0}$.
 The lemma follows.

\end{proof}

\bigskip

\begin{proposition}
 {\bf [properness/projectivity of Quot-scheme].}
 Given
   a family of nodal curves $C_S/S$ over $S$ with a relative polarization class $L$   and
   a $1$-dimensional coherent sheaf $\tilde{\cal F}_S$ on $(C_S\times Y)/S$.
 Then
  the $\Quot$-scheme
     $\Quot^{Z^{B+\sqrt{-1},L}}_{(C_S\times Y)/S}
            (\tilde{\cal F}_S,c  )$
  of quotient sheaves $\tilde{\cal F}_S\rightarrow \tilde{\cal Q}_S\rightarrow 0$
  that is flat over $S$ with $Z^{B+\sqrt{-1}J,L}(\tilde{\cal Q}_S)=c$
  is projective over $S$.
\end{proposition}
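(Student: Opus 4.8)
The plan is to reduce the statement to Grothendieck's classical projectivity theorem for $\Quot$-schemes of a fixed Hilbert polynomial, the bridge being the finiteness of Lemma~2.1.7. First I would note that any quotient $\tilde{\cal F}_s\twoheadrightarrow\tilde{\cal Q}_s$ in question is a coherent sheaf of dimension $\le 1$ on a fiber $C_s\times Y$ of $(C_S\times Y)/S$, since $\tilde{\cal F}_S$ has relative dimension $1$ over $S$ and forming a quotient does not raise dimension. Fix a relative polarization class $H:=\pr_1^{\ast}L\otimes\pr_2^{\ast}A$ on $(C_S\times Y)/S$, with $A$ an ample class on $Y$. By Lemma~2.1.2 the value $Z^{B+\sqrt{-1}J,L}(\tilde{\cal Q}_s)$ is an explicit affine-linear expression in $\chi(\tilde{\cal Q}_s)$ and in the pairings $B\cdot\tilde\beta(\tilde{\cal Q}_s)$ and $(J+L)\cdot\tilde\beta(\tilde{\cal Q}_s)$, hence it controls the Hilbert polynomial $P^H(\tilde{\cal Q}_s)=(H\cdot\tilde\beta(\tilde{\cal Q}_s))\,m+\chi(\tilde{\cal Q}_s)$; by Lemma~2.1.7 the set $\Poly^H(Z^{B+\sqrt{-1}J,L}=c)=\{P_1,\dots,P_k\}$ is finite, so every quotient with $Z^{B+\sqrt{-1}J,L}(\tilde{\cal Q}_s)=c$ has Hilbert polynomial equal to one of the $P_i$.

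Second, I would exhibit the $\Quot$-scheme in the statement as a finite union of open-and-closed pieces of ordinary Grothendieck $\Quot$-schemes. For an $S$-flat family of quotients the fiberwise Hilbert polynomial is locally constant on $S$, so stratifying by Hilbert polynomial decomposes the functor of $S$-flat quotients of $\tilde{\cal F}_S$ with $Z^{B+\sqrt{-1}J,L}=c$ as the disjoint union over $i=1,\dots,k$ of its subfunctors landing in $\Quot^{P_i}_{(C_S\times Y)/S}(\tilde{\cal F}_S)$; and on each such Grothendieck $\Quot$-scheme the function $s\mapsto Z^{B+\sqrt{-1}J,L}(\tilde{\cal Q}_s)$ is locally constant by Lemma~2.1.5, so $\{Z^{B+\sqrt{-1}J,L}=c\}$ is there a union of connected components. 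Thus
\[
 \Quot^{Z^{B+\sqrt{-1}J,L}}_{(C_S\times Y)/S}(\tilde{\cal F}_S,c)\;\cong\;
   \coprod_{i=1}^{k}\,\bigl(\Quot^{P_i}_{(C_S\times Y)/S}(\tilde{\cal F}_S)\bigr)_{Z^{B+\sqrt{-1}J,L}=c}\,,
\]
each summand being an open-and-closed subscheme of a Grothendieck $\Quot$-scheme.

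Third, I would conclude by standard facts: $(C_S\times Y)/S$ is projective over $S$ because the relative polarization $H$ above exists (using that $L$ is relatively ample on $C_S/S$ and $Y$ is projective), so Grothendieck's construction gives each $\Quot^{P_i}_{(C_S\times Y)/S}(\tilde{\cal F}_S)$ projective over $S$; an open-and-closed subscheme of an $S$-projective scheme is $S$-projective; and a finite disjoint union of $S$-projective schemes is $S$-projective (e.g.\ combine closed immersions $\Quot^{P_i}\hookrightarrow{\Bbb P}^{N_i}_S$ into disjoint linear sub-bundles of a single projective-space bundle over $S$). This yields the projectivity of $\Quot^{Z^{B+\sqrt{-1}J,L}}_{(C_S\times Y)/S}(\tilde{\cal F}_S,c)$ over $S$.

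The only step of real substance is the first one: converting the constraint $Z^{B+\sqrt{-1}J,L}(\tilde{\cal Q})=c$, which a priori involves the continuous data $B+\sqrt{-1}J$ and $L$, into a \emph{finite} list of numerical Hilbert polynomials. This is precisely where Lemma~2.1.7---and behind it Kleiman's ampleness criterion, which bounds the relevant effective curve classes---is indispensable; the remaining steps are Grothendieck's classical $\Quot$-scheme theory together with the local-constancy statements of Lemmas~2.1.5 and~2.1.7 in flat families and routine bookkeeping about finite disjoint unions.
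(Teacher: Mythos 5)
Your proposal is correct and follows essentially the same route as the paper: invariance of both $P^H$ and $Z^{B+\sqrt{-1}J,L}$ in flat families plus the finiteness of $\Poly^H(Z^{B+\sqrt{-1}J,L}=c)$ from Lemma~2.1.7 reduce everything to finitely many Grothendieck $\Quot$-schemes with fixed Hilbert polynomial, whose projectivity over $S$ is classical. If anything, your version is slightly more careful than the paper's displayed identity, since you cut out the locus $Z^{B+\sqrt{-1}J,L}=c$ inside each $\Quot^{P_i}$ as an open-and-closed subscheme rather than identifying the $Z$-fixed $\Quot$-scheme with the full disjoint union, which is the precise statement needed.
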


\begin{proof}
 Let $H$ be a relative ample line bundle on $(C_S\times Y)/S$.
 Since both the Hilbert polynomial $P^H$ and the central charge $Z^{B+\sqrt{-1}J,L}$
  are invariant under flat deformations,
 connected components of
  $\Quot^{Z^{B+\sqrt{-1},L}}_{(C_S\times Y)/S}(\tilde{\cal F}_S,c  )$
 coincide with some connected components
  of $\Quot^H_{(C_S\times Y)/S}(\tilde{\cal F}_S)$.
 It follows from Lemma~2.1.7
  that
  $$
    \Quot^{Z^{B+\sqrt{-1},L}}_{(C_S\times Y)/S}
            (\tilde{\cal F}_S,c  )\;
	=\;  \coprod_{P_i\, \in\,  \scriptsizePoly^H(Z^{B+\sqrt{-1}J,L}=c) }\,
	          \Quot^H_{(C_S\times Y)/S}(\tilde{\cal F}_S, P_i)
  $$
  is a finite disjoint union.
 The latter is known to be projective over $S$.

\end{proof}

\bigskip

\bigskip
	
\subsection{Stability conditions on Fourier-Mukai transforms
        of dimension $1$ and width $[0]$ from nodal curves}

Recall
 the compact family $C_{\cal M}/{\cal M}$ of nodal curves
  with a relative ample class $L$ and
 the projective Calabi-Yau $3$-fold $Y$
    with a complexified K\"{a}hler class $B+\sqrt{-1}J$.
We introduce
   the notion of $Z$-(semi)stability of Fourier-Mukai transforms
     of dimension $1$ and width $[0]$
    from a fiber of $C_{\cal M}/{\cal M}$ to $Y$  and
   their Harder-Narasimhan filtration with respect to $Z$,    and
 collect their basic properties in this subsection.
Though the technique here remains pre-Bridgeland,
 readers are recommended
  to [Br] of Tom Bridgeland for the generalization to objects
     in the derived category of coherent sheaves  and
  to [Dou2] and [Dou3] of Michael Douglas
    for the string-theoretical motivation and reason of the design.

\bigskip

\begin{flushleft}
{\bf  Stability conditions defined by a central charge functional $Z$.}
\end{flushleft}
\begin{definition}
 {\bf [$Z$-semistable, $Z$-stable, $Z$-unstable, strictly $Z$-semistable].} {\rm
 Let $[C]\in {\cal M}$.
 A $1$-dimensional coherent sheaf $\tilde{\cal F}$ on $C\times Y$
   is said to be {\it $Z$-semistable} (resp.\ $Z$-stable)
  if $\tilde{\cal F}$ is pure and
     $\mu^Z(\tilde{\cal F}^{\prime})\le $ (resp.\ $<$)
     $\mu^Z(\tilde{\cal F})$
	 for any nonzero proper subsheaf
	   $\tilde{\cal F }^{\prime}\subset \tilde{\cal F}$.
 Such $\tilde{\cal F}$ is called $Z$-{\it unstable} if it is not $Z$-semistable,   and
  is called {\it strictly $Z$-semistable} if it is $Z$-semistable but not $Z$-stable.
 When the central charge functional  $Z$ is known and fixed either explicitly or implicitly,
  we may use the terminology: {\it semistable, stable, unstable, strictly semistable},
  for simplicity.
}\end{definition}

\bigskip

\begin{notation}
 {\bf  [$Z$-(semi)stable].} {\rm (Cf.\ [H-L: Notation~1.2.5].) } {\rm
  Following Huybrechts and Lehn [H-L], we'll rephrase, for example,
   the above definition in a combined statement and notation
  ``{\it $\tilde{\cal  F}$ is (semi)stable
         if it is pure and
		    $\mu^Z(\tilde{\cal F}^{\prime})\,(\le)\, \mu^Z(\tilde{\cal F})$
		    for any nonzero proper subsheaf
			       $\tilde{\cal F}^{\prime}\subset \tilde{\cal F}$ }."
  to cover both the $Z$-semistable and the $Z$-stable situation.
}\end{notation}

The following statements from [H-L: Sec.~1.2] remain to be true by the same argument.

\bigskip

\begin{proposition}
{\bf [equivalent form].}   {\rm (Cf.\ [H-L: Proposition~1.2.6].)}
 Let $\tilde{\cal E}$ be a purely $1$-dimensional coherent sheaf on $C\times Y$.
 Then the following statements are equivalent:
 \begin{itemize}
  \item[(1)]
   $\tilde{\cal F}$ is (semi)stable.

  \item[(2)]
   $\mu^Z(\tilde{\cal F}^{\prime})\,(\le)\,\mu^Z(\tilde{\cal F})$
     for all nonzero proper saturated subsheaves
          $\tilde{\cal F}^{\prime}\subset \tilde{\cal F}$.
	
  \item[(3)]
   $\mu^Z(\tilde{\cal F})\,(\le)\, \mu^Z(\tilde{\cal F}^{\prime\prime})$
   for all nonzero proper $1$-dimensional quotient sheaves
   $\tilde{\cal F}\rightarrow \tilde{\cal F}^{\prime\prime}$.

  \item[(4)]
   $\mu^Z(\tilde{\cal F})\,(\le)\, \mu^Z(\tilde{\cal F}^{\prime\prime})$
   for all nonzero proper purely $1$-dimensional quotient sheaves
   $\tilde{\cal F}\rightarrow \tilde{\cal F}^{\prime\prime}$.
 \end{itemize}
\end{proposition}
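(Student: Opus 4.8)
The plan is to mimic almost verbatim the proof of [H-L: Proposition~1.2.6], replacing the reduced Hilbert polynomial ordering there by the $Z$-slope $\mu^Z$; what makes this transcription legitimate is precisely the two structural facts about $\mu^Z$ already established in the excerpt. The first is additivity of $Z^{B+\sqrt{-1}J,L}$ on short exact sequences (Lemma~2.1.6), which via the explicit formula of Lemma~2.1.2 says that $\chi(\,\cdot\,)-B\cdot\tilde\beta(\,\cdot\,)$ (the ``numerator'') and $(J+L)\cdot\tilde\beta(\,\cdot\,)$ (the ``denominator'', which is $>0$ for any nonzero sheaf of dimension $\le 1$ on $C\times Y$, since $\tilde\beta$ of a nonzero $1$-dimensional sheaf is a nonzero effective curve class and $J+L$ is ample on $C\times Y$) are both additive. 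Hence for a short exact sequence $0\to\tilde{\cal F}_1\to\tilde{\cal F}_2\to\tilde{\cal F}_3\to 0$ of sheaves of dimension $\le 1$ (so that all three denominators are either $0$ or $>0$), the usual ``see-saw'' inequalities hold: $\mu^Z(\tilde{\cal F}_1)\ (\le)\ \mu^Z(\tilde{\cal F}_2) \iff \mu^Z(\tilde{\cal F}_1)\ (\le)\ \mu^Z(\tilde{\cal F}_3) \iff \mu^Z(\tilde{\cal F}_2)\ (\le)\ \mu^Z(\tilde{\cal F}_3)$, and similarly with $(\ge)$. I would record this see-saw as a one-line observation first, since each of the four implications rests on it.

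Next I would run the four implications. $(1)\Rightarrow(2)$ is trivial since saturated proper subsheaves are in particular nonzero proper subsheaves. $(2)\Rightarrow(1)$: given an arbitrary nonzero proper subsheaf $\tilde{\cal F}'\subset\tilde{\cal F}$, let $\tilde{\cal F}''$ be its saturation in $\tilde{\cal F}$ (the maximal subsheaf with the same support dimension and $\tilde{\cal F}/\tilde{\cal F}''$ pure of dimension $1$, or $0$); then $\tilde{\cal F}''/\tilde{\cal F}'$ is $0$-dimensional, so $\tilde\beta(\tilde{\cal F}')=\tilde\beta(\tilde{\cal F}'')$ and $\chi(\tilde{\cal F}')\le\chi(\tilde{\cal F}'')$ (a $0$-dimensional sheaf has nonnegative Euler characteristic), whence $\mu^Z(\tilde{\cal F}')\le\mu^Z(\tilde{\cal F}'')\ (\le)\ \mu^Z(\tilde{\cal F})$; if $\tilde{\cal F}''=\tilde{\cal F}$ one instead uses purity of $\tilde{\cal F}$ to see $\tilde{\cal F}'=\tilde{\cal F}$, contradicting properness, or argues directly that then $\tilde\beta(\tilde{\cal F}') = \tilde\beta(\tilde{\cal F})$ forces the strict/non-strict inequality by the $\chi$ comparison — here one must be slightly careful in the \emph{stable} case, which is exactly where I expect the only real friction. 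For $(1)\Leftrightarrow(3)$ and $(3)\Leftrightarrow(4)$: apply the see-saw to $0\to\tilde{\cal F}'\to\tilde{\cal F}\to\tilde{\cal F}''\to 0$ to convert the subsheaf condition into the quotient condition, using that an arbitrary $1$-dimensional quotient has a further quotient that is purely $1$-dimensional (kill its torsion; the torsion is $0$-dimensional and killing it only increases $\mu^Z$, so the infimum of $\mu^Z$ over $1$-dimensional quotients is attained on purely $1$-dimensional ones), which gives $(3)\Leftrightarrow(4)$ and closes the loop $(1)\Rightarrow(3)\Rightarrow(4)\Rightarrow(1)$ together with $(1)\Leftrightarrow(2)$.

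The one genuine subtlety — the ``hard part'', such as it is — is bookkeeping the strict versus non-strict inequalities uniformly in the combined ``(semi)stable'' statement when passing between a subsheaf and its saturation, or between a quotient and its maximal pure-$1$-dimensional quotient: one must check that adding or removing a $0$-dimensional piece never turns a strict inequality into equality \emph{in the direction that matters}, which follows from $\chi\ge 0$ (resp.\ $>0$ when the $0$-dimensional piece is nonzero) for $0$-dimensional sheaves on $C\times Y$ and the positivity of the denominator $(J+L)\cdot\tilde\beta$. I would isolate this as a small sublemma (``$\mu^Z$ is unchanged under saturation up to a non-strict inequality in the favorable direction'') and then the rest is the standard see-saw shuffle, exactly as in Huybrechts–Lehn, so I would not reproduce it in full detail but cite [H-L: Proposition~1.2.6] for the remaining routine steps.
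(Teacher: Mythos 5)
Your proposal is essentially what the paper itself does: the paper offers no independent proof of Proposition~2.2.3, stating only that the statements from [H-L: Sec.~1.2] ``remain to be true by the same argument,'' and your transcription of [H-L: Proposition~1.2.6] with the reduced Hilbert polynomial replaced by $\mu^Z$ — justified by additivity of numerator $\chi(\,\cdot\,)-B\cdot\tilde\beta(\,\cdot\,)$ and denominator $(J+L)\cdot\tilde\beta(\,\cdot\,)$ (Lemmas~2.1.2, 2.1.6), positivity of the denominator on nonzero $1$-dimensional sheaves, and $\chi\ge 0$ (with $>0$ if nonzero) for $0$-dimensional sheaves — is exactly the intended argument and is sound. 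Two local slips should be corrected before this is written out, though neither breaks the proof because you also supply the correct alternative in each case. First, purity of $\tilde{\cal F}$ does \emph{not} force the saturation of a proper subsheaf to be proper: for $\tilde{\cal F}'={\cal I}_p\tilde{\cal F}$ the quotient is a skyscraper and the saturation is all of $\tilde{\cal F}$, purity notwithstanding; so delete that alternative and keep your direct argument that $\tilde\beta(\tilde{\cal F}')=\tilde\beta(\tilde{\cal F})$ together with $\chi(\tilde{\cal F}')<\chi(\tilde{\cal F})$ gives the strict inequality $\mu^Z(\tilde{\cal F}')<\mu^Z(\tilde{\cal F})$, which covers both the semistable and the stable case. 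Second, the direction in ``killing the torsion only increases $\mu^Z$'' is reversed: passing from a $1$-dimensional quotient $\tilde{\cal F}''$ to its pure quotient $\tilde{\cal F}''/T$ leaves $\tilde\beta$ unchanged and lowers $\chi$ by $\chi(T)\ge 0$, so $\mu^Z(\tilde{\cal F}''/T)\le\mu^Z(\tilde{\cal F}'')$ — and it is precisely this \emph{decrease} that lets (4) imply (3), consistent with the correct mechanism you state in your final paragraph.
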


\bigskip

\begin{proposition}
 {\bf [$Z$-slope and homomorphism].}  {\rm (Cf.\ [H-L: Proposition 1.2.7].) }
  Let $\tilde{\cal F}$ and $\tilde{\cal G}$ be purely $1$-dimensional coherent sheaf
   on $C\times Y$.
  If $\mu^Z(\tilde{\cal F})> \mu^Z(\tilde{\cal G})$,
   then $\Hom(\tilde{\cal F},\tilde{\cal G})=0$.
  If $\mu^Z(\tilde{\cal F})=\mu^Z(\tilde{\cal G})$ and
   $h: \tilde{\cal F}\rightarrow \tilde{\cal G}$ is nonzero,
  then $h$ is injective if $\tilde{\cal F}$ is $Z$-stable   and
                     surjective if $\tilde{\cal G}$ is $Z$-stable.
  If $Z(\tilde{\cal F})=Z(\tilde{\cal G})$,
   then any nonzero homomorphism
    $h:\tilde{\cal F}\rightarrow \tilde{\cal G}$ is an isomorphism
	provided $\tilde{\cal F}$ or $\tilde{\cal G}$ is $Z$-stable.
\end{proposition}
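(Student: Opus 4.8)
The plan is to mirror the classical argument for semistable sheaves (as in [H-L: Proposition~1.2.7]), using only the additivity of $Z$ (Lemma~2.1.6) and the basic properties of $\mu^Z$ and purity already assembled. The three assertions of the proposition have the same structure as the usual slope arguments, so I would organize the proof as a sequence of short reductions, each exploiting the fact that a nonzero homomorphism $h:\tilde{\cal F}\rightarrow\tilde{\cal G}$ between purely $1$-dimensional sheaves on $C\times Y$ produces a purely $1$-dimensional image $\tilde{\cal I}:=\Image(h)$ which is simultaneously a quotient of $\tilde{\cal F}$ and a subsheaf of $\tilde{\cal G}$ (purity of $\tilde{\cal I}$ being inherited from that of $\tilde{\cal G}$, since subsheaves of pure sheaves are pure).

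First I would handle the vanishing statement. Suppose $h\neq 0$; then $\tilde{\cal I}$ is a nonzero purely $1$-dimensional quotient of $\tilde{\cal F}$, so by Proposition~2.2.3(4) (the quotient form of $Z$-semistability) we get $\mu^Z(\tilde{\cal F})\le\mu^Z(\tilde{\cal I})$; and $\tilde{\cal I}$ is a nonzero subsheaf of $\tilde{\cal G}$, so by the defining property of $Z$-semistability, $\mu^Z(\tilde{\cal I})\le\mu^Z(\tilde{\cal G})$. Hence $\mu^Z(\tilde{\cal F})\le\mu^Z(\tilde{\cal G})$, contradicting the hypothesis $\mu^Z(\tilde{\cal F})>\mu^Z(\tilde{\cal G})$; therefore $\Hom(\tilde{\cal F},\tilde{\cal G})=0$. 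Next, assume $\mu^Z(\tilde{\cal F})=\mu^Z(\tilde{\cal G})$ and $h\neq 0$. If $\tilde{\cal F}$ is $Z$-stable and $h$ is not injective, then $\kker h$ is a nonzero proper subsheaf of $\tilde{\cal F}$ with $\mu^Z(\kker h)<\mu^Z(\tilde{\cal F})$; since $\tilde{\cal I}\cong\tilde{\cal F}/\kker h$ and $\mu^Z$ is determined by the additive data $(\chi,\tilde{\beta})$ via Lemma~2.1.6, additivity of $Z$ forces $\mu^Z(\tilde{\cal I})>\mu^Z(\tilde{\cal F})=\mu^Z(\tilde{\cal G})$ --- here one must be a little careful because the numerator and denominator both add, so I would argue at the level of $Z$ itself in $\hat{\Bbb H}_-$, writing $Z(\tilde{\cal F})=Z(\kker h)+Z(\tilde{\cal I})$ and using that the slope is (minus) the cotangent of the argument, which is strictly monotonic in the relevant region. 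But then $\tilde{\cal I}\hookrightarrow\tilde{\cal G}$ with $\mu^Z(\tilde{\cal I})>\mu^Z(\tilde{\cal G})$ violates semistability (equivalently purity plus Proposition~2.2.3(2)) of $\tilde{\cal G}$; so $h$ is injective. The surjectivity claim when $\tilde{\cal G}$ is $Z$-stable is dual: if $h$ is not surjective then $\tilde{\cal G}/\tilde{\cal I}$ is a nonzero proper quotient, forcing $\mu^Z(\tilde{\cal G}/\tilde{\cal I})>\mu^Z(\tilde{\cal G})$ by $Z$-stability of $\tilde{\cal G}$, hence $\mu^Z(\tilde{\cal I})<\mu^Z(\tilde{\cal G})=\mu^Z(\tilde{\cal F})$ (again via additivity of $Z$), which contradicts semistability of $\tilde{\cal F}$ because $\tilde{\cal I}$ is a purely $1$-dimensional quotient of $\tilde{\cal F}$. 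Finally, if $Z(\tilde{\cal F})=Z(\tilde{\cal G})$ and $\tilde{\cal F}$ (say) is $Z$-stable, then by the previous paragraph $h$ is injective, and then $Z(\tilde{\cal G}/\tilde{\cal I})=Z(\tilde{\cal G})-Z(\tilde{\cal I})=Z(\tilde{\cal G})-Z(\tilde{\cal F})=0$ by additivity; but a nonzero coherent sheaf of dimension $\le 1$ has $Z$-value in $\hat{\Bbb H}_-$, which does not contain $0$ (Lemma~2.1.2), so $\tilde{\cal G}/\tilde{\cal I}=0$ and $h$ is an isomorphism. The case $\tilde{\cal G}$ stable is symmetric.

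The one genuinely delicate point --- and the place where I expect to spend the most care --- is the passage from statements about $Z$ (which is honestly additive) to statements about $\mu^Z$, since $\mu^Z$ is a ratio and "additivity of numerator and denominator separately" does not give additivity of the ratio. The clean way around this, which I would use throughout, is to phrase all the slope comparisons as comparisons of the \emph{argument} (equivalently, the slope) of a point in $\hat{\Bbb H}_-\subset{\Bbb C}$, exploiting the elementary fact that if $z_1,z_2\in\hat{\Bbb H}_-$ then $z_1+z_2\in\hat{\Bbb H}_-$ and its slope $-\Real/\Imaginary$ lies strictly between those of $z_1$ and $z_2$ (with the obvious conventions when $\Imaginary=0$, where $\mu^Z=+\infty$), which is exactly the kind of "mediant" inequality that makes the classical slope arguments work. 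All the remaining ingredients --- purity passing to subsheaves, the equivalent quotient characterization of semistability, and the non-vanishing of $Z$ on nonzero dimension-$\le 1$ sheaves --- are already available from Proposition~2.2.3, Lemma~2.1.6 and Lemma~2.1.2, so no new machinery is needed beyond this bookkeeping. \qedhere
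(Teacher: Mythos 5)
Your argument is correct and is essentially the paper's own: the paper gives no separate proof, asserting only that the statements of [H-L: Sec.~1.2] (here Proposition~1.2.7) ``remain to be true by the same argument,'' and your proof is exactly that argument, with the seesaw/mediant property of $\mu^Z$ coming from additivity of $Z$ in $\hat{\Bbb H}_-$ as the only new bookkeeping. Note only that, as in [H-L: Proposition~1.2.7], both $\tilde{\cal F}$ and $\tilde{\cal G}$ must be taken $Z$-semistable --- a hypothesis left implicit in the paper's wording of the proposition but which your proof (necessarily and correctly) uses.
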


\bigskip

\begin{corollary}
 {\bf [$Z$-stable $\Rightarrow$ simple].}  {\rm (Cf.\ [H-L: Corollary 1.2.8].) }\\
  If $\tilde{\cal F}$ is a $Z$-stable sheaf,
   then $\End(\tilde{\cal F})\simeq {\Bbb C}$.
\end{corollary}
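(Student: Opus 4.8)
The plan is to run the classical Schur-lemma argument, using Proposition~2.2.4 (``$Z$-slope and homomorphism'') as the only non-formal input. First I would record that, since $C$ is a (proper) nodal curve and $Y$ is projective, the product $C\times Y$ is proper over ${\Bbb C}$; hence for the coherent sheaf $\tilde{\cal F}$ the group $\End(\tilde{\cal F})=\Hom(\tilde{\cal F},\tilde{\cal F})$ is a finite-dimensional ${\Bbb C}$-vector space, and it is naturally a ${\Bbb C}$-algebra with identity $\id_{\tilde{\cal F}}$.

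Next I would apply Proposition~2.2.4 with $\tilde{\cal G}=\tilde{\cal F}$: since trivially $Z(\tilde{\cal F})=Z(\tilde{\cal F})$ and $\tilde{\cal F}$ is $Z$-stable (in particular purely $1$-dimensional), the last assertion there says that every nonzero $h\in\End(\tilde{\cal F})$ is an isomorphism. Thus $\End(\tilde{\cal F})$ is a finite-dimensional division ${\Bbb C}$-algebra.

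Finally I would conclude $\End(\tilde{\cal F})={\Bbb C}\cdot\id_{\tilde{\cal F}}$. Given $h\in\End(\tilde{\cal F})$, consider left multiplication $L_h\colon \End(\tilde{\cal F})\to\End(\tilde{\cal F})$, a ${\Bbb C}$-linear endomorphism of a nonzero finite-dimensional ${\Bbb C}$-vector space; since ${\Bbb C}$ is algebraically closed, $L_h$ has an eigenvalue $\lambda\in{\Bbb C}$, so there is a nonzero $g\in\End(\tilde{\cal F})$ with $(h-\lambda\,\id_{\tilde{\cal F}})\circ g=0$. By the previous step $g$ is an isomorphism, so $h-\lambda\,\id_{\tilde{\cal F}}=0$, i.e.\ $h=\lambda\,\id_{\tilde{\cal F}}$. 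Hence $\End(\tilde{\cal F})\simeq{\Bbb C}$. (Alternatively, one may simply cite the standard fact that a finite-dimensional division algebra over an algebraically closed field is that field.)

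I do not anticipate a genuine obstacle: once Proposition~2.2.4 is in hand, everything is formal. The only point needing a word of justification is the finite-dimensionality of $\End(\tilde{\cal F})$, which is precisely where properness of $C\times Y$ enters; without it the endomorphism ring could a priori be an infinite-dimensional division ${\Bbb C}$-algebra and the conclusion would fail.
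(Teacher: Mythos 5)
Your proof is correct and follows essentially the same route the paper intends: the corollary is stated as a direct consequence of Proposition~2.2.4 via the standard Huybrechts--Lehn Schur-lemma argument, which is exactly what you carry out (every nonzero endomorphism is an isomorphism, so $\End(\tilde{\cal F})$ is a finite-dimensional division algebra over the algebraically closed field ${\Bbb C}$, hence equals ${\Bbb C}$). Your remark on finite-dimensionality coming from properness of $C\times Y$ is the right justification for the only non-formal step.
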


\bigskip

\bigskip

\begin{flushleft}
{\bf The Harder-Narasimhan filtration with respect to $Z$. }
\end{flushleft}

\begin{definition}
 {\bf [Harder-Narasimhan filtration].} {\rm
 Let $\tilde{\cal F}$ be a purely $1$-dimensional coherent sheaf on $C\times Y$.
 A {\it Harder-Narasimhan filtration} of $\tilde{\cal F}$
  (with respect to a central charge $Z$)
  is an increasing filtration
  $$
    0\;=\; \HN^Z_0(\tilde{\cal F})\; \subset \; \HN^Z_1(\tilde{\cal F})\;
	   \subset\; \cdots\; \subset \; \HN^Z_l(\tilde{\cal F})\; =\; \tilde{\cal F}\,,
  $$
  such that the factors
    $\gr_i^{HN^Z}
	   :=  \HN^Z_i(\tilde{\cal F})/\HN^Z_{i-1}(\tilde{\cal F})$,
	  for $i=1,\,\ldots\,,\, l$, are $Z$-semistable of dimension $1$
	  with $Z$-slopes $\mu_i$ satisfying
	$$
         \mu^Z_{max}(\tilde{\cal F})\;
		  :=\; \mu_1\; > \;\mu_2\; >\; \cdots\;
		  >\; \mu_l\; =:\; \mu^Z_{min}(\tilde{\cal F})\,.
    $$
}\end{definition}

\bigskip

\begin{lemma}
 {\bf [$\mu^Z_{min}$, $\mu^Z_{max}$,  and homomorphism].}
 {\rm (Cf.\ [H-L: Lemma~1.3.3].)}
 If $\tilde{\cal F}$ and $\tilde{\cal G}$
   are purely $1$-dimensional coherent sheaves on $C\times Y$
   with $\mu^Z_{min}(\tilde{\cal F})>\mu^Z_{max}(\tilde{\cal G})$,
  then $\Hom(\tilde{\cal F},\tilde{\cal G})=0$.
\end{lemma}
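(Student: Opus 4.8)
The statement is the exact analogue of the classical fact that $\Hom$ vanishes between semistable sheaves once the minimal slope of the source strictly exceeds the maximal slope of the target, so the plan is to mimic the standard argument (e.g.\ [H-L: Lemma~1.3.3]) using only the tools already established in this excerpt: the Harder--Narasimhan filtration of Definition~2.2.7, the additivity of $Z$ (Lemma~2.1.6) and its consequence that $\mu^Z$ interpolates slopes of subobjects and quotients, and the vanishing statement for semistable pieces, Proposition~2.2.5.

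First I would reduce to the semistable case. Let $h:\tilde{\cal F}\rightarrow\tilde{\cal G}$ be a nonzero homomorphism and suppose it is nonzero; choose HN filtrations $0=\HN^Z_0(\tilde{\cal F})\subset\cdots\subset\HN^Z_k(\tilde{\cal F})=\tilde{\cal F}$ and $0=\HN^Z_0(\tilde{\cal G})\subset\cdots\subset\HN^Z_l(\tilde{\cal G})=\tilde{\cal G}$. Since $h\ne 0$, there is a smallest index $i$ with $h(\HN^Z_i(\tilde{\cal F}))\ne 0$; then the induced map $\gr_i^{HN^Z}(\tilde{\cal F})=\HN^Z_i(\tilde{\cal F})/\HN^Z_{i-1}(\tilde{\cal F})\rightarrow\tilde{\cal G}$ is nonzero, and composing with the quotients $\tilde{\cal G}\twoheadrightarrow\tilde{\cal G}/\HN^Z_{j}(\tilde{\cal G})$ and taking $j$ maximal so that the composite is still nonzero, one obtains a nonzero map $\gr_i^{HN^Z}(\tilde{\cal F})\rightarrow\gr_{j+1}^{HN^Z}(\tilde{\cal G})$ between two $Z$-semistable purely $1$-dimensional sheaves. (Here one uses that the image of a purely $1$-dimensional sheaf is purely $1$-dimensional, or passes to the maximal $1$-dimensional quotient; this is the same routine bookkeeping as in [H-L].)

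Next I would derive the slope inequalities that produce a contradiction. From $\mu^Z(\gr_i^{HN^Z}(\tilde{\cal F}))\ge\mu^Z_{min}(\tilde{\cal F})$ and $\mu^Z(\gr_{j+1}^{HN^Z}(\tilde{\cal G}))\le\mu^Z_{max}(\tilde{\cal G})$ together with the hypothesis $\mu^Z_{min}(\tilde{\cal F})>\mu^Z_{max}(\tilde{\cal G})$, one gets $\mu^Z(\gr_i^{HN^Z}(\tilde{\cal F}))>\mu^Z(\gr_{j+1}^{HN^Z}(\tilde{\cal G}))$. But Proposition~2.2.5 says that a strict inequality of $Z$-slopes between purely $1$-dimensional sheaves forces $\Hom$ to vanish, contradicting the existence of the nonzero map constructed above. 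Hence $h=0$.

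The only point that requires a little care — and the place I expect to spend the most attention — is the exactness/purity bookkeeping in the reduction step: one must check that passing to the relevant HN-graded piece on the source and to the appropriate sub/quotient on the target really yields a \emph{nonzero} map between \emph{purely $1$-dimensional} sheaves of slope $\ge\mu^Z_{min}(\tilde{\cal F})$ and $\le\mu^Z_{max}(\tilde{\cal G})$ respectively, using only additivity of $Z$ (Lemma~2.1.6) to control how $\mu^Z$ behaves under the inclusions $\HN^Z_{i-1}\subset\HN^Z_i$ and the surjections onto $\gr_{j+1}^{HN^Z}$. Everything else is a direct transcription of the classical proof, so no genuinely new ingredient is needed beyond what the excerpt already provides.
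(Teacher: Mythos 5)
Your argument is correct and is essentially the proof the paper intends: the paper simply imports [H-L: Lemma~1.3.3] ("the same argument"), and your reduction via the Harder--Narasimhan filtration to a nonzero map between semistable graded pieces $\gr_i^{HN^Z}(\tilde{\cal F})\rightarrow\gr_{j+1}^{HN^Z}(\tilde{\cal G})$, contradicted by the $Z$-slope Hom-vanishing for (semistable) purely $1$-dimensional sheaves, is exactly that classical argument transcribed to $\mu^Z$. The only blemishes are bookkeeping: the Hom-vanishing statement you invoke is the paper's Proposition~2.2.4 (cf.\ [H-L: Proposition~1.2.7], where semistability of the two sheaves is the intended hypothesis, satisfied here by the HN factors) rather than 2.2.5, and the HN filtration is Definition~2.2.6.
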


\bigskip

\begin{lemma}
 {\bf [subsheaf with maximal $Z$-slope].}  {\rm (Cf.\ [H-L: Lemma~1.3.5].)}
 Let $\tilde{\cal F}$ be a purely $1$-dimensional coherent sheaf on $C\times Y$.
 Then there exists a subsheaf $\tilde{\cal G}\subset \tilde{\cal F}$
  such that for all subsheaves $\tilde{\cal F}^{\prime}\subset \tilde{\cal F}$,
  one has $\mu^Z(\tilde{\cal F}^{\prime})\le \mu^Z(\tilde{\cal G})$,
  and in case of equality $\tilde{\cal F}^{\prime}\subset \tilde{\cal G}$.
 Furthermore, $\tilde{\cal G}$  is uniquely determined by $Z$ and is $Z$-semistable.
\end{lemma}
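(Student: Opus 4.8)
The plan is to follow the classical construction of the maximal destabilizing subsheaf, adapted to the central-charge slope $\mu^Z$. First I would establish the key finiteness/boundedness input: among all subsheaves $\tilde{\cal F}^{\prime}\subset\tilde{\cal F}$ (equivalently, among all saturated such subsheaves, by Proposition~2.2.3), the set of values $\mu^Z(\tilde{\cal F}^{\prime})$ is bounded above. For this one passes to the associated $1$-cycles $\tilde{\beta}(\tilde{\cal F}^{\prime})$, which lie in the finitely many effective subcycles of $\tilde{\beta}(\tilde{\cal F})$ modulo numerical equivalence, so that $(J+L)\cdot\tilde{\beta}(\tilde{\cal F}^{\prime})$ takes finitely many positive values; combined with Lemma~2.1.9 comparing $\mu^Z$ with the Hilbert-polynomial slope $\mu^P$ and the classical fact that $\chi$ of subsheaves of fixed $\tilde{\beta}$ is bounded above (e.g.\ via a relative Quot-scheme argument, cf.\ Proposition~2.1.10), one gets that $\sup_{\tilde{\cal F}^{\prime}}\mu^Z(\tilde{\cal F}^{\prime})=:\mu^Z_{\max}$ is finite and attained.

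Next I would produce a candidate $\tilde{\cal G}$ realizing this supremum and having maximal rank (i.e.\ maximal $(J+L)\cdot\tilde{\beta}$) among subsheaves of slope $\mu^Z_{\max}$; again this maximum over the finite set of possible cycle classes is attained. The heart of the argument is then the standard see-saw: if $\tilde{\cal F}^{\prime}\subset\tilde{\cal F}$ is any subsheaf, consider $\tilde{\cal G}+\tilde{\cal F}^{\prime}$ and the exact sequence
$$
0\;\longrightarrow\; \tilde{\cal G}\cap\tilde{\cal F}^{\prime}\;\longrightarrow\;\tilde{\cal F}^{\prime}\;\longrightarrow\;(\tilde{\cal G}+\tilde{\cal F}^{\prime})/\tilde{\cal G}\;\longrightarrow\;0\,.
$$
By additivity of $Z$ (Lemma~2.1.6), and using $\mu^Z(\tilde{\cal G}\cap\tilde{\cal F}^{\prime})\le\mu^Z_{\max}$ together with $\mu^Z((\tilde{\cal G}+\tilde{\cal F}^{\prime})/\tilde{\cal G})\le\mu^Z(\tilde{\cal G}+\tilde{\cal F}^{\prime})$-type comparisons, one forces $\mu^Z(\tilde{\cal F}^{\prime})\le\mu^Z(\tilde{\cal G})$; and in case of equality $\tilde{\cal G}+\tilde{\cal F}^{\prime}$ also has slope $\mu^Z_{\max}$, so by maximality of the rank of $\tilde{\cal G}$ one gets $\tilde{\cal G}+\tilde{\cal F}^{\prime}=\tilde{\cal G}$, i.e.\ $\tilde{\cal F}^{\prime}\subset\tilde{\cal G}$. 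This containment property immediately yields uniqueness of $\tilde{\cal G}$ and, applied to $\tilde{\cal G}$ itself against any of its subsheaves, shows $\tilde{\cal G}$ is $Z$-semistable. One also notes $\tilde{\cal G}$ is purely $1$-dimensional: its torsion subsheaf of dimension $0$ has $\tilde{\beta}=0$, hence $\Imaginary Z=0$, hence $\mu^Z=+\infty$ by convention, contradicting $\mu^Z(\tilde{\cal G})=\mu^Z_{\max}<\infty$ unless it vanishes.

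The main obstacle I anticipate is the boundedness-above of $\mu^Z$ over all subsheaves in the singular/relative setting: one must handle subsheaves of a coherent sheaf on $C\times Y$ where $C$ is a nodal curve and $Y$ is a fixed projective Calabi-Yau $3$-fold, so the naive appeal to Hilbert-scheme boundedness needs the comparison Lemma~2.1.9 and the fact (implicit in Proposition~2.1.10 and Lemma~2.1.7) that subsheaves with fixed $\tilde{\beta}$ and bounded $\chi$ form a bounded family, so that $\chi$ cannot run to $+\infty$ while $\tilde{\beta}$ stays in a finite set. Granting that, everything else is the verbatim transcription of [H-L: Lemma~1.3.5] with $\mu^Z$ in place of the reduced Hilbert polynomial, using only additivity (Lemma~2.1.6) and the ordering properties of $\mu^Z$ on the finite set of relevant cycle classes.
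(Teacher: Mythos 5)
Your proposal is correct and follows essentially the route the paper itself intends, since the paper gives no written proof here but simply asserts that the argument of [H-L: Lemma~1.3.5] carries over with the reduced Hilbert polynomial replaced by $\mu^Z$: the only point specific to the $Z$-slope setting is the boundedness and discreteness of $\mu^Z$ on subsheaves, which you derive, as the paper does implicitly (and spells out later in the proof of its Lemma~3.7), from the finiteness of the possible cycle classes $\tilde{\beta}(\tilde{\cal F}^{\prime})\le\tilde{\beta}(\tilde{\cal F})$ together with the integrality and upper-boundedness of $\chi$. Your subsequent steps (attained maximal slope, maximal $(J+L)\cdot\tilde{\beta}$ among slope-maximizing subsheaves, see-saw via additivity of $Z$ to get the containment, uniqueness and semistability) are the standard maximal-destabilizing-subsheaf construction and match the cited argument.
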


\bigskip

\begin{definition}
{\bf [maximal destabilizing subsheaf].}  {\rm
 Continuing Lemma~2.2.8.
 $\tilde{\cal G}$ is called
  the {\it maximal $Z$-destabilizing subsheaf} of $\tilde{\cal F}$.
}\end{definition}

\bigskip

\begin{theorem}
 {\bf [existence and uniqueness of Harder-Narasimhan filtration].}
 {\rm (Cf.\ [H-L: Theorem~1.3.4].)}
 Every purely $1$-dimensional coherent sheaf $\tilde{\cal F}$
  on $C\times Y$ has
  a unique Harder-Narasimhan filtration with respect to $Z$.
\end{theorem}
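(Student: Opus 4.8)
\noindent\emph{Proof proposal.}
The plan is to transcribe the classical Harder--Narasimhan argument ([H-L: Theorem~1.3.4]), with the Hilbert-polynomial slope $\mu^P$ replaced by the $Z$-slope $\mu^Z$; the two substantive inputs, Lemma~2.2.7 (Hom-vanishing when $\mu^Z_{min}>\mu^Z_{max}$) and Lemma~2.2.8 (the maximal $Z$-destabilizing subsheaf), are already available. Throughout I would invoke the elementary ``seesaw'' fact: for a short exact sequence $0\to\tilde{\cal A}\to\tilde{\cal B}\to\tilde{\cal C}\to 0$ of purely $1$-dimensional sheaves on $C\times Y$, the number $\mu^Z(\tilde{\cal B})$ is the convex combination of $\mu^Z(\tilde{\cal A})$ and $\mu^Z(\tilde{\cal C})$ with the positive weights $(J+L)\cdot\tilde{\beta}(\tilde{\cal A})$ and $(J+L)\cdot\tilde{\beta}(\tilde{\cal C})$ --- immediate from Definition~2.1.4 and the additivity of $\chi$ and $\tilde{\beta}$ (Lemma~2.1.6) --- so that $\mu^Z(\tilde{\cal B})$ lies strictly between $\mu^Z(\tilde{\cal A})$ and $\mu^Z(\tilde{\cal C})$ unless these coincide. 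For uniqueness I would show that the bottom step $\tilde{\cal G}_1$ of \emph{any} Harder--Narasimhan filtration of $\tilde{\cal F}$ is forced to be the maximal $Z$-destabilizing subsheaf of Lemma~2.2.8, and then induct on the multiplicity $r(\tilde{\cal F}):=H\cdot\tilde{\beta}(\tilde{\cal F})\in{\Bbb Z}_{>0}$ applied to $\tilde{\cal F}/\tilde{\cal G}_1$ (whose induced filtration is again of Harder--Narasimhan type). For the forcing: $\tilde{\cal G}_1$ is $Z$-semistable of slope $\mu_1$, and $\tilde{\cal F}/\tilde{\cal G}_1$ is purely $1$-dimensional with $\mu^Z_{max}(\tilde{\cal F}/\tilde{\cal G}_1)=\mu_2<\mu_1$; for any subsheaf $\tilde{\cal F}^{\prime}\subset\tilde{\cal F}$, the exact sequence $0\to\tilde{\cal F}^{\prime}\cap\tilde{\cal G}_1\to\tilde{\cal F}^{\prime}\to\tilde{\cal F}^{\prime}/(\tilde{\cal F}^{\prime}\cap\tilde{\cal G}_1)\to 0$, whose last term embeds in $\tilde{\cal F}/\tilde{\cal G}_1$ and so has slope $\le\mu_2$, together with the seesaw fact gives $\mu^Z(\tilde{\cal F}^{\prime})\le\mu_1$, with equality only if that last term vanishes, i.e.\ $\tilde{\cal F}^{\prime}\subseteq\tilde{\cal G}_1$. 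This is exactly the defining property in Lemma~2.2.8, whose $\tilde{\cal G}$ is unique, so $\tilde{\cal G}_1$ is determined.

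For existence I would induct on $r(\tilde{\cal F})$: if $\tilde{\cal F}$ is $Z$-semistable take $l=1$; otherwise let $\tilde{\cal G}_1$ be the maximal $Z$-destabilizing subsheaf of Lemma~2.2.8. I would first check $\tilde{\cal G}_1$ is saturated --- if $\tilde{\cal G}_1\subsetneq\tilde{\cal H}\subseteq\tilde{\cal F}$ with $\tilde{\cal H}/\tilde{\cal G}_1$ of dimension $0$, then $\tilde{\beta}(\tilde{\cal H})=\tilde{\beta}(\tilde{\cal G}_1)$ while $\chi(\tilde{\cal H})>\chi(\tilde{\cal G}_1)$, forcing $\mu^Z(\tilde{\cal H})>\mu^Z(\tilde{\cal G}_1)$ against maximality --- so $\tilde{\cal Q}:=\tilde{\cal F}/\tilde{\cal G}_1$ is purely $1$-dimensional, nonzero, with $r(\tilde{\cal Q})=r(\tilde{\cal F})-r(\tilde{\cal G}_1)<r(\tilde{\cal F})$. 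By induction $\tilde{\cal Q}$ has a Harder--Narasimhan filtration; I take the preimages of its terms in $\tilde{\cal F}$, with $\tilde{\cal G}_1$ as the bottom step, and the only nontrivial check is the strict inequality $\mu^Z(\tilde{\cal G}_1)>\mu^Z_{max}(\tilde{\cal Q})$. For this, given $\tilde{\cal H}\subset\tilde{\cal Q}$ with preimage $\tilde{\cal G}_1^{\prime}\subset\tilde{\cal F}$, the seesaw fact applied to $0\to\tilde{\cal G}_1\to\tilde{\cal G}_1^{\prime}\to\tilde{\cal H}\to 0$ shows that $\mu^Z(\tilde{\cal H})\ge\mu^Z(\tilde{\cal G}_1)$ would give $\mu^Z(\tilde{\cal G}_1^{\prime})\ge\mu^Z(\tilde{\cal G}_1)$, whence $\tilde{\cal G}_1^{\prime}\subseteq\tilde{\cal G}_1$ by the equality clause of Lemma~2.2.8 and so $\tilde{\cal H}=0$. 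Since $r=H\cdot\tilde{\beta}$ strictly drops at each stage, the recursion terminates, and the strictly decreasing chain $\mu_1>\mu_2>\cdots>\mu_l$ is produced.

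I do not expect a genuine obstacle inside this argument: the hard part has been packaged into Lemma~2.2.8, whose proof requires a Grothendieck-type boundedness of the family of subsheaves of $\tilde{\cal F}$ of $Z$-slope bounded below --- which, via the comparison $\mu^Z\sim\mu^P$ of Lemma~2.1.8, reduces to the familiar boundedness lemma in the projective family $(C_{\cal M}\times Y)/{\cal M}$. Relative to that, the only points demanding care are the saturatedness of the maximal $Z$-destabilizing subsheaf, the strict decrease $\mu_1>\mu_2>\cdots>\mu_l$ of the slopes of the graded pieces, and the observation that the positive-integer invariant $r=H\cdot\tilde{\beta}$ drops at each stage so the construction stops after finitely many steps; all of these are elementary consequences of the explicit form of $\mu^Z$ (Definition~2.1.4) and the additivity recorded in Lemma~2.1.6.
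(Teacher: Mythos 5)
Your proposal is correct and is essentially the paper's own argument: the paper gives no independent proof of this theorem, but simply cites [H-L: Theorem~1.3.4] with the understanding that the classical Harder--Narasimhan construction goes through verbatim once $\mu^P$ is replaced by $\mu^Z$, which is exactly what you carry out (existence by induction via the maximal $Z$-destabilizing subsheaf of Lemma~2.2.8, uniqueness by identifying the bottom step with that subsheaf, both resting on the seesaw property coming from the additivity of $\chi$ and $\tilde{\beta}$ in Lemma~2.1.6 and the positivity of $(J+L)\cdot\tilde{\beta}$). Your minor variant of the uniqueness step (forcing the first term rather than comparing two filtrations via the Hom-vanishing Lemma~2.2.7) is an equally standard route and changes nothing of substance.
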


\bigskip

\begin{theorem}
 {\bf [Harder-Narasimhan filtration stable under base field extension].}
 {\rm (Cf.\ [H-L: Theorem~1.3.7].)}
  Let $\tilde{\cal F}$ be a purely $1$-dimensional coherent sheaf on $C\times Y$
   and $K$ be a field extension of $k \simeq  {\Bbb C}$.
  Then
   $$
     \HN_{\bullet}(\tilde{\cal F}\otimes_k K)\;
	 =\; \HN_{\bullet}(\tilde{\cal F})\otimes_k K\,.
   $$
\end{theorem}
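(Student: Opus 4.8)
The plan is to exhibit $\HN_{\bullet}(\tilde{\cal F})\otimes_k K$ as a Harder-Narasimhan filtration of $\tilde{\cal F}\otimes_k K$; since the Harder-Narasimhan filtration is unique (Theorem~2.2.10, whose proof is valid over the base field $K$ as well), this is equivalent to the asserted equality. As $\otimes_k K$ is flat, $0 = \HN^Z_0(\tilde{\cal F})\otimes_k K\subset\cdots\subset\HN^Z_l(\tilde{\cal F})\otimes_k K = \tilde{\cal F}\otimes_k K$ is again a filtration by coherent subsheaves of $\tilde{\cal F}\otimes_k K$, with successive quotients $\gr_i^{HN^Z}\otimes_k K$. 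By Lemma~2.1.2 the $Z$-slope $\mu^Z$ depends on a sheaf only through $\chi$ and the intersection numbers $B\cdot\tilde\beta$ and $(J+L)\cdot\tilde\beta$, which are unchanged by flat base change (cf.\ Lemma~2.1.5), and $\dim\Supp(\,\bullet\,\otimes_k K) = \dim\Supp(\,\bullet\,)$; hence each $\gr_i^{HN^Z}\otimes_k K$ has dimension $1$ with the same $Z$-slope $\mu_i$, so the $Z$-slopes are still strictly decreasing. Thus it remains only to check that purity and $Z$-semistability of the factors, and purity of $\tilde{\cal F}$ itself, are preserved by $\otimes_k K$ — the last being needed so that Theorem~2.2.10 applies to $\tilde{\cal F}\otimes_k K$.

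The crux — and the step I expect to be the main obstacle — is the claim: if $\tilde{\cal G}$ is a purely $1$-dimensional coherent sheaf on $C\times Y$ then $\tilde{\cal G}\otimes_k K$ is again purely $1$-dimensional, and moreover if $\tilde{\cal G}$ is $Z$-semistable then so is $\tilde{\cal G}\otimes_k K$. First I would reduce to $K$ finitely generated over $k$: a nonzero subsheaf of $\tilde{\cal G}\otimes_k K$ that is $0$-dimensional, or that violates $\mu^Z(\,\bullet\,)\,(\le)\,\mu^Z(\tilde{\cal G})$, is of finite presentation and so, together with its inclusion, is already defined over a finitely generated subextension $k\subset K_0\subset K$ by a standard limit argument; since $\chi$ and $\tilde\beta$, hence $\mu^Z$, do not change under $\otimes_{K_0}K$, the same subsheaf already witnesses a failure of purity or of semistability over $K_0$. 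So assume $K = \kappa(\eta)$ for the generic point $\eta$ of an integral affine scheme $T$ of finite type over $k = {\Bbb C}$, and form the constant flat family ${\cal G}_T := p^{\ast}\tilde{\cal G}$ on $(C\times Y)\times_k T$, with $p$ the projection. Then $({\cal G}_T)_\eta = \tilde{\cal G}\otimes_k K$, while $({\cal G}_T)_t\cong\tilde{\cal G}$ for every closed point $t\in T$, since $k = {\Bbb C}$ is algebraically closed and so $\kappa(t) = {\Bbb C}$.

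Now suppose ${\cal H}\subset\tilde{\cal G}\otimes_k K = ({\cal G}_T)_\eta$ is a nonzero subsheaf that is $0$-dimensional (if we are testing purity) or has $\mu^Z({\cal H}) > \mu^Z(\tilde{\cal G})$ (if we are testing semistability of a $Z$-semistable $\tilde{\cal G}$). Spreading ${\cal H}$ out and applying generic flatness, I would pass to a dense open $U\subseteq T$ over which ${\cal H}$ extends to a coherent subsheaf ${\cal H}_U\subset{\cal G}_T|_{(C\times Y)\times_k U}$ with both ${\cal H}_U$ and the quotient ${\cal G}_T|_{(C\times Y)\times_k U}/{\cal H}_U$ flat over $U$. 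Flatness of the quotient makes $0\to({\cal H}_U)_t\to({\cal G}_T)_t\to({\cal G}_T/{\cal H}_U)_t\to 0$ exact for every $t\in U$, and flatness of ${\cal H}_U$ keeps $({\cal H}_U)_t$ nonzero and keeps $\chi$ and the $1$-cycle class $\tilde\beta$ — hence $\mu^Z$ in the semistability case — constant along $U$. Evaluating at a closed point $t\in U$ then gives a nonzero subsheaf of $({\cal G}_T)_t\cong\tilde{\cal G}$ that is $0$-dimensional, resp.\ has $\mu^Z > \mu^Z(\tilde{\cal G})$, contradicting the purity, resp.\ $Z$-semistability, of $\tilde{\cal G}$. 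This proves the claim, and with it the theorem. (Equivalently, the last move may be packaged as the openness of $Z$-semistability in flat families over a finite-type $k$-scheme, the needed boundedness being furnished by Lemmas~2.1.7 and 2.1.8 and Proposition~2.1.9; but the spreading-out argument above is self-contained.)
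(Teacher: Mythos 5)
Your proposal is correct, but it takes a genuinely different route from the paper. The paper gives no argument of its own here: Theorem~2.2.11 is one of the statements asserted to ``remain true by the same argument'' as in [H-L] (Theorem~1.3.7), whose proof is a descent argument --- by uniqueness, the Harder-Narasimhan filtration of $\tilde{\cal F}\otimes_k K$ is invariant under relative automorphisms, one reduces to finitely generated extensions and descends the filtration from $K$ to $k$, and only afterwards does preservation of semistability under field extension drop out as Corollary~1.3.8 (the paper's Corollary~2.2.12). You reverse this logical order: you prove the ascent of purity and $Z$-semistability directly --- reduce to $K$ finitely generated by a limit argument, realize $K$ as the function field of an integral finite-type parameter scheme $T$, spread a putative destabilizing (or torsion) subsheaf out over a dense open $U\subseteq T$ with flat subsheaf and quotient, and specialize at a closed point, where the fiber is literally $\tilde{\cal G}$ again because $k={\Bbb C}$ is algebraically closed; constancy of $\chi$ and $\tilde\beta$ (hence $\mu^Z$) in the flat proper family then yields the contradiction. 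With that in hand, $\HN_{\bullet}(\tilde{\cal F})\otimes_k K$ is visibly a Harder-Narasimhan filtration of the (pure) sheaf $\tilde{\cal F}\otimes_k K$, and uniqueness (Theorem~2.2.10, valid over $K$) finishes the proof; there is no circularity, since Theorem~2.2.10 and the Section~2.1 lemmas you invoke do not depend on Theorem~2.2.11. What each approach buys: the descent route of [H-L] works over an arbitrary base field and needs no specialization machinery, while your route exploits $k={\Bbb C}$ to stay self-contained (only spreading out, generic flatness, and constancy of Hilbert polynomials in flat proper families --- tools already used in the paper, e.g.\ Lemma~2.1.5 and the proof of Lemma~3.6) and it establishes Corollary~2.2.12 en route rather than as a consequence.
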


\bigskip

\begin{corollary}
 {\bf [semistability under field extension].}
 {\rm (Cf.\ [H-L: Corollary~1.3.8].)}
  If $\tilde{\cal F}$ is a $Z$-semistable $1$-dimensional coherent sheaf
      on $C\times Y$   and
	$K$ is a field extension of $k\simeq {\Bbb C}$,
 then $\tilde{\cal F}\otimes_kK$ is $Z$-semistable as well.	
\end{corollary}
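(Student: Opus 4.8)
The plan is to deduce Corollary~2.2.12 directly from Theorem~2.2.11 together with the characterization of semistability via the Harder-Narasimhan filtration. First I would note that a purely $1$-dimensional coherent sheaf $\tilde{\cal F}$ is $Z$-semistable if and only if its Harder-Narasimhan filtration (with respect to $Z$) is trivial, i.e.\ $l=1$ and $\HN^Z_1(\tilde{\cal F}) = \tilde{\cal F}$; this is immediate from Definition~2.2.6 and the existence-uniqueness statement Theorem~2.2.10, since $Z$-semistability forces $\mu^Z_{max}(\tilde{\cal F}) = \mu^Z_{min}(\tilde{\cal F})$ and conversely a one-step filtration with $Z$-semistable factor exhibits $\tilde{\cal F}$ itself as $Z$-semistable. (I would also observe that purity of $\tilde{\cal F}$ is preserved under the field extension $k \hookrightarrow K$, since $C\times Y$ remains of the same dimension and flatness/associated-primes behavior is stable under the flat base change $\Spec K \to \Spec k$; this is needed so that ``$Z$-semistable'' even makes sense for $\tilde{\cal F}\otimes_k K$.)

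The main step is then purely formal: by Theorem~2.2.11 we have
$$
  \HN_{\bullet}(\tilde{\cal F}\otimes_k K)\;
  =\; \HN_{\bullet}(\tilde{\cal F})\otimes_k K\,,
$$
so if the Harder-Narasimhan filtration of $\tilde{\cal F}$ is trivial (length $1$), then so is that of $\tilde{\cal F}\otimes_k K$, because tensoring the one-step filtration $0 = \HN^Z_0(\tilde{\cal F}) \subset \HN^Z_1(\tilde{\cal F}) = \tilde{\cal F}$ with $K$ over $k$ yields the one-step filtration $0 \subset \tilde{\cal F}\otimes_k K$. Hence $\tilde{\cal F}\otimes_k K$ has trivial Harder-Narasimhan filtration, and by the criterion above it is $Z$-semistable. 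This concludes the proof.

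Alternatively, if one prefers not to route through the HN filtration, one can argue contrapositively: if $\tilde{\cal F}\otimes_k K$ were $Z$-unstable, its maximal $Z$-destabilizing subsheaf $\tilde{\cal G}$ (Definition~2.2.9) would have $\mu^Z(\tilde{\cal G}) > \mu^Z(\tilde{\cal F}\otimes_k K) = \mu^Z(\tilde{\cal F})$, and by Theorem~2.2.11 we would have $\tilde{\cal G} = \tilde{\cal G}_0 \otimes_k K$ for a subsheaf $\tilde{\cal G}_0 \subset \tilde{\cal F}$ (namely $\tilde{\cal G}_0 = \HN^Z_1(\tilde{\cal F})$); but $\mu^Z$ is computed from $\chi$ and $\tilde{\beta}$ (Definition~2.1.4 together with Lemma~2.1.2), both of which are unchanged under the flat field extension, so $\mu^Z(\tilde{\cal G}_0) = \mu^Z(\tilde{\cal G}) > \mu^Z(\tilde{\cal F})$, contradicting $Z$-semistability of $\tilde{\cal F}$.

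The hard part is not in this corollary at all — it is entirely carried by Theorem~2.2.11 (stability of the HN filtration under base field extension), whose proof in turn relies on the uniqueness in Theorem~2.2.10 plus a Galois-descent / faithful-flatness argument showing that the base-changed filtration is Galois-invariant and hence descends. Given that theorem, the only genuine point to check here is the compatibility of $\mu^Z$ (equivalently, of $\chi(\tilde{\cal F})$ and $\tilde{\beta}(\tilde{\cal F})$) with the field extension, which follows because Euler characteristics and cycle classes are invariant under flat base change; everything else is bookkeeping with one-step filtrations.
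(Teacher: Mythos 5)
Your proof is correct and follows essentially the same route the paper takes (implicitly, via its citation of [H-L: Corollary~1.3.8]): the corollary is an immediate consequence of Theorem~2.2.11 on base-change invariance of the Harder-Narasimhan filtration, combined with the observation that $Z$-semistability is equivalent to the HN filtration being of length one. Your added remarks on purity and on the invariance of $\chi$ and $\tilde{\beta}$ under the flat extension $k\hookrightarrow K$ are exactly the bookkeeping the paper leaves to the reader, so nothing further is needed.
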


\clearpage

\bigskip

\begin{flushleft}
{\bf Jordan-H\"{o}lder filtrations and {\boldmath $S$}-equivalence with respect to $Z$. }
\end{flushleft}
\begin{definition}
{\bf [Jordan-H\"{o}lder filtration of semistable object].}  {\rm
 Let $\tilde{\cal F}$ be $Z$-semistable coherent sheaf of dimension $1$ on $C\times Y$.
 A {\it Jordan-H\"{o}lder filtration} of $\tilde{\cal F}$ with respect to $Z$
  is a filtration
  $$
    0\; =\; \tilde{\cal F}_0\; \subset \; \tilde{\cal F}_1\;
	     \subset\;  \cdots\; \subset\; \tilde{\cal F}_l\; =\; \tilde{\cal F}
  $$
  such that the factors
   $\gr_i(\tilde{\cal F}):= \tilde{\cal F}_i/\tilde{\cal F}_{i-1}$
   are $Z$-stable with $Z$-slope $\mu^Z(\tilde{\cal F})$.
}\end{definition}

\bigskip

\begin{proposition}
{\bf [existence of JH filtration/uniqueness of graded object].}
{\rm (Cf.\ [H-L: Proposition~1.5.2].)}
 Continuing Definition~2.2.13.
  Jordan-H\"{o}lder filtrations always exist.
  The graded object $\gr(\tilde{\cal F}):= \oplus_i\gr_i(\tilde{\cal F})$
    does not depend on the choice of the Jordan-H\"{o}lder filtration.
\end{proposition}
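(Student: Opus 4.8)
The plan is to follow the classical proof of existence and uniqueness of Jordan--H\"older filtrations for Gieseker-semistable sheaves (Huybrechts--Lehn, Proposition~1.5.2), with the $Z$-slope $\mu^Z$ playing the role of the reduced Hilbert polynomial. The ingredients I would use are all in hand: additivity of $Z^{B+\sqrt{-1}J,L}$ along short exact sequences (Lemma~2.1.6), its explicit form $Z^{B+\sqrt{-1}J,L}(\tilde{\cal F})=(\chi(\tilde{\cal F})-B\cdot\tilde{\beta}(\tilde{\cal F}))-\sqrt{-1}\,((J+L)\cdot\tilde{\beta}(\tilde{\cal F}))$ (Lemma~2.1.2), and the rigidity statement that a nonzero homomorphism between $Z$-stable sheaves of equal $Z$-slope is injective when the source is $Z$-stable and surjective when the target is $Z$-stable, hence an isomorphism when both are (Proposition~2.2.5). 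Fix once and for all an integral ample class $H$ on $C\times Y$; the positive integer $H\cdot\tilde{\beta}(\tilde{\cal F})$ will serve as the induction parameter, and it drops strictly whenever one passes to a proper quotient of $\tilde{\cal F}$ by a nonzero $1$-dimensional subsheaf, because $\tilde{\beta}$ then decreases by a nonzero effective class on which the ample $H$ is strictly positive.

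\emph{Existence.} Set $\mu:=\mu^Z(\tilde{\cal F})$. If $\tilde{\cal F}$ is $Z$-stable the trivial filtration $0\subset\tilde{\cal F}$ works. Otherwise $Z$-semistability forces a proper nonzero subsheaf of $Z$-slope exactly $\mu$, and any such is purely $1$-dimensional since $\tilde{\cal F}$ is; among these choose $\tilde{\cal F}_1$ with $H\cdot\tilde{\beta}(\tilde{\cal F}_1)$ minimal. I would then verify, in order: (i) $\tilde{\cal F}_1$ is $Z$-stable --- a proper nonzero $\tilde{\cal G}\subset\tilde{\cal F}_1$ has $\mu^Z(\tilde{\cal G})\le\mu$ by semistability of $\tilde{\cal F}$, and equality would force either $H\cdot\tilde{\beta}(\tilde{\cal G})<H\cdot\tilde{\beta}(\tilde{\cal F}_1)$, against minimality, or $\tilde{\beta}(\tilde{\cal G})=\tilde{\beta}(\tilde{\cal F}_1)$, in which case $\tilde{\cal F}_1/\tilde{\cal G}$ is a nonzero $0$-dimensional sheaf and Lemma~2.1.2 gives $\mu^Z(\tilde{\cal G})<\mu$; (ii) $\tilde{\cal F}_1$ is saturated in $\tilde{\cal F}$ --- its saturation has the same $\tilde{\beta}$ but no smaller $\chi$, and a strictly larger $\chi$ would by Lemma~2.1.2 make $\mu^Z$ exceed $\mu$, impossible for a subsheaf of the $Z$-semistable $\tilde{\cal F}$; (iii) hence $\tilde{\cal F}/\tilde{\cal F}_1$ is purely $1$-dimensional, has $Z$-slope $\mu$ by additivity of $Z$, and is $Z$-semistable, since a subsheaf of $\tilde{\cal F}/\tilde{\cal F}_1$ of $Z$-slope $>\mu$ pulls back to a subsheaf of $\tilde{\cal F}$ of $Z$-slope $>\mu$ (the $Z$-slope of an extension lies strictly between those of sub and quotient). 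As $H\cdot\tilde{\beta}(\tilde{\cal F}/\tilde{\cal F}_1)<H\cdot\tilde{\beta}(\tilde{\cal F})$, induction produces a Jordan--H\"older filtration of $\tilde{\cal F}/\tilde{\cal F}_1$; its preimage in $\tilde{\cal F}$, with $\tilde{\cal F}_1$ prepended, is the filtration sought.

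\emph{Uniqueness of $\gr(\tilde{\cal F})$.} Given two Jordan--H\"older filtrations $\tilde{\cal F}_\bullet$ and $\tilde{\cal F}^{\prime}_\bullet$, with first step $\tilde{\cal F}_1$ of the first, let $j$ be minimal with $\tilde{\cal F}_1\subseteq\tilde{\cal F}^{\prime}_j$. The composite $\tilde{\cal F}_1\hookrightarrow\tilde{\cal F}^{\prime}_j\twoheadrightarrow\gr_j(\tilde{\cal F}^{\prime})$ is nonzero by minimality, hence an isomorphism by Proposition~2.2.5 (both sides are $Z$-stable of $Z$-slope $\mu$), from which one reads off $\tilde{\cal F}_1\cap\tilde{\cal F}^{\prime}_{j-1}=0$ and $\tilde{\cal F}^{\prime}_{j-1}+\tilde{\cal F}_1=\tilde{\cal F}^{\prime}_j$. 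Passing to $\tilde{\cal F}/\tilde{\cal F}_1$: the induced filtration $\tilde{\cal F}_\bullet/\tilde{\cal F}_1$ is a Jordan--H\"older filtration with factors $\gr_2(\tilde{\cal F}),\dots,\gr_l(\tilde{\cal F})$, while the images of $\tilde{\cal F}^{\prime}_\bullet$ give a Jordan--H\"older filtration with factors $\gr_1(\tilde{\cal F}^{\prime}),\dots,\gr_m(\tilde{\cal F}^{\prime})$ with the copy of $\gr_j(\tilde{\cal F}^{\prime})$ deleted. By the induction hypothesis these two multisets of isomorphism classes agree; restoring $\gr_1(\tilde{\cal F})\cong\gr_j(\tilde{\cal F}^{\prime})$ shows the factor multisets of $\tilde{\cal F}_\bullet$ and $\tilde{\cal F}^{\prime}_\bullet$ coincide, and in particular $\gr(\tilde{\cal F})\cong\gr(\tilde{\cal F}^{\prime})$.

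The genuinely delicate point --- the one place where the argument is not a verbatim transcription of the Gieseker case --- is step~(ii): a subsheaf realizing the maximal $Z$-slope is automatically saturated, which is what keeps $\tilde{\cal F}/\tilde{\cal F}_1$ pure so that the induction closes. Everything else (the betweenness of $Z$-slopes, the strict drop of $H\cdot\tilde{\beta}$, the short-exact-sequence bookkeeping) reduces mechanically to Lemmas~2.1.2 and~2.1.6, and I expect only the careful handling of $0$-dimensional phenomena to require attention.
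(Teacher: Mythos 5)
Your proof is correct and is essentially the argument the paper itself relies on: Proposition~2.2.14 is stated with the citation of [H-L: Proposition~1.5.2] precisely because the classical Jordan--H\"older argument carries over verbatim once the reduced Hilbert polynomial is replaced by $\mu^Z$, which is what you have written out, with the $0$-dimensional phenomena (minimal $H\cdot\tilde{\beta}$ forces stability, maximal-slope subsheaves are saturated) correctly disposed of via Lemma~2.1.2 and the additivity Lemma~2.1.6, and the rigidity input supplied by Proposition~2.2.5.
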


\bigskip

\begin{definition}
{\bf [$S$-equivalence of semistable objects].}  {\rm
 Two $Z$-semistable coherent sheaves
    $\tilde{\cal F}_1$ and $\tilde{\cal F}_2$ of dimension $1$ on $C\times Y$
    are called {\it S-equivalent}
  if $\gr(\tilde{\cal F}_1)\simeq \gr(\tilde{\cal F}_2)$.
 In notation,   $\tilde{\cal F}_1\stackrel{s}{\sim}\tilde{\cal F}_2$.
}\end{definition}

\bigskip

\bigskip
		
\subsection{A chamber structure on the space of  stability conditions}

We discuss in this subsection a chamber structure
 on the space of stability conditions used in this note.
As we remain in the case before Tom Bridgeland [Br],
 readers are referred to the classical work [Qin] of Zhenbo Qin
 for related early discussions and references.

 \bigskip

 \begin{flushleft}
 {\bf The space of stability conditions $\Stab^{1,[0]}(C\times Y)$.}
 \end{flushleft}
 Let
  $\DCG(C)$ be the degree class group of the nodal curve $C$,
  $\DCG^+(C)\subset \DCG(C)$ be the semigroup of effective classes,
  $\DCG(C)_{\Bbb R}:= \DCG(C)\otimes_{\Bbb Z}{\Bbb R}$, and
  $\DCG^+(C)_{\Bbb R}\subset \DCG(C)_{\Bbb R}$
    be the cone of effective classes spanned by ${\Bbb R}_{>0}$-rays
	through elements in $\DCG^+(C)\subset \DCG(C)_{\Bbb R}$,  and
  $\KCone(Y)$ be the K\"{a}hler cone of $Y$.	
Then, the data $(B+\sqrt{-1}J,L)$ that defines a central charge functional
  $Z^{B+\sqrt{-1}J,L}$  is parameterized by the set
  $$
    \Stab^{1,[0]}(C\times Y)\;
  	:=\;  H_2(Y;{\Bbb R})\times  \sqrt{-1}\cdot\KCone(Y)
	        \times \DCG^+(C)\,.
  $$
The natural topology on $\Stab^{1,[0]}(C\times Y)$
   as a subset in a vector space by definition
 coincides with the topology on $\Stab^{1,[0]}(C\times Y)$
  defined by treating its elements $(B+\sqrt{-1}J,L)$
  as $\hat{\Bbb H}_-$-valued functionals $Z^{B+\sqrt{-1}J,L}$
  on the set of $1$-dimensional coherent sheaves on $C\times Y$.
Denote also
 $$
  \Stab^{1,[0]}(C\times Y)_{\Bbb R}\;
   :=\;   H_2(Y;{\Bbb R})\times  \sqrt{-1}\cdot\KCone(Y)
	        \times \DCG^+(C)_{\Bbb R}\,,
 $$
 with multi-variable coordinates $(x_B+\sqrt{-1}\,x_J,x_L)$.

\bigskip

\begin{definition}
{\bf [space of stability conditions].} {\rm
 The set $\Stab^{1,[0]}(C\times Y)$ equipped with the above topology
  is called the {\it space of stability conditions}
  on the category of Fourier-Mukai transforms of dimension $1$ and width $[0]$
  from $C$ to $Y$.
}\end{definition}

\bigskip

\begin{notation}
{\bf [$\Stab^{1,[0]}(C\times Y)$ as abstract space]}  {\rm
 As there is no chance of confusion,
 for $Z=Z^{(B+\sqrt{-1}J,L)}$
   with $(B+\sqrt{-1}J,L) \in \Stab^{1,[0]}(C\times Y)$,
  we'll write also $Z\in \Stab^{1,[0]}(C\times Y)$
  with the latter treated as an abstract space of stability conditions.
}\end{notation}

\bigskip

\begin{flushleft}
{\bf Walls and chambers on $\Stab^{1,[0]}(C\times Y)$.}
\end{flushleft}
Since
  $\chi(\,\bullet\,)\in {\Bbb Z}$ and
  $\tilde{\beta}(\,\bullet\,)\in \NE(C\times Y)$
 are constant under flat deformations,
for the rest of the discussion in this subsection,
we shall fix a class
  $(\chi_0,\tilde{\beta}_0)\in {\Bbb Z}\times \NE(C\times Y)$   and
consider only $1$-dimensional coherent sheaves on $C\times Y$
  with Euler characteristic $\chi_0$ and curve class $\tilde{\beta}_0$.

Given
 $(B_1+\sqrt{-1}J_1,L_1)$, $(B_2+\sqrt{-1}J_2,L_2)
    \in\Stab^{1,[0]}(C\times Y)$,
let $Z_1 := Z^{B_1+\sqrt{-1}J_1,L_1}$ and
     $Z_2 := Z^{B_2+\sqrt{-1}J_2,L_2}$.	
Suppose that
  there exists a $1$-dimensional coherent sheaf $\tilde{\cal F}$
    with $\chi(\tilde{\cal F})=\chi_0$ and
           $\tilde{\beta}(\tilde{\cal F})=\tilde{\beta}_0$
  such that
   $\tilde{\cal F}$ is $Z_1$-stable but $Z_2$-unstable.
Then, there exists a saturated proper subsheaf
 $\tilde{\cal F}^{\prime}\subset \tilde{\cal F}$
 such that
 $$
   \mu^{Z_1}(\tilde{\cal F}^{\prime})\;
     <\; \mu^{Z_1}(\tilde{\cal F})
      \hspace{2em}\mbox{while}\hspace{2em}
   \mu^{Z_2}(\tilde{\cal F}^{\prime})\;
     >\; \mu^{Z_1}(\tilde{\cal F})\,.
 $$
In the explicit form, this says that
 $$
    \frac{
      \chi(\tilde{\cal F}^{\prime})
        - B_1\cdot\tilde{\beta}(\tilde{\cal F}^{\prime})}
	{(J_1+L_1)\cdot \tilde{\beta}(\tilde{\cal F}^{\prime})}\;
    < \;
   \frac{
      \chi_0
        - B_1\cdot\tilde{\beta}_0}
	{(J_1+L_1)\cdot \tilde{\beta}_0 }\; 	
   \hspace{1.2em}\mbox{while}\hspace{1.2em}
    \frac{
     \chi(\tilde{\cal F}^{\prime})
        - B_2\cdot\tilde{\beta}(\tilde{\cal F}^{\prime})}
	{(J_2+L_2)\cdot \tilde{\beta}(\tilde{\cal F}^{\prime})}\;
    > \;
   \frac{
      \chi_0
        - B_2\cdot\tilde{\beta}_0}
	{(J_2+L_2)\cdot \tilde{\beta}_0 }\; 	
 $$
Note that
  $0<\tilde{\beta}(\tilde{\cal  F}^{\prime})
      <\tilde{\beta}(\tilde{\cal F})=\tilde{\beta}_0$.
This motivates the following definition:

\bigskip

\begin{definition}
 {\bf [numerical-class-defined walls and chambers in $\Stab^{1,[0]}(C\times Y)$].}
{\rm
 For a fixed $(\chi_0,\tilde{\beta}_0)\in {\Bbb Z}\times \NE(C\times Y)$,
 let $(e,\tilde{\xi})\in {\Bbb Z}\times \NE(C\times Y)$
   with $0< \tilde{\xi}< \tilde{\beta}_0$.
 Let
   $$
      Q^{(\chi_0,\tilde{\beta}_0)}_{(e,\tilde{\xi})}
	  (x_B, x_J, x_L)\;
	  :=\;  \left( e - x_B\cdot\tilde{\xi}\right)
        	    \left((x_J, x_L)\cdot \tilde{\beta}_0 \right)\,
			  -\, \left(\chi_0 -  x_B\cdot\tilde{\beta}_0	\right)
			         \left((x_J, x_L)\cdot \tilde{\xi}\right)\,,
   $$
   a quadratic polynomial that is affine in the multivariable $x_B$
     and linear in the combined multivariable $(x_J,x_L)$.
 Define the {\it  numerical-class-defined wall}
  $W^{(\chi_0,\tilde{\beta}_0)}_{(e,\tilde{\xi})}$
   in $\Stab^{1,[0]}(C\times Y)_{\Bbb R}$ to be
  $$
	 W^{(\chi_0,\tilde{\beta}_0)}_{(e,\tilde{\xi})}\;
	   :=\;  \left\{   \left.
        	    \begin{array}{l}
			    (x_B+\sqrt{-1}\,x_J,x_L)\\[1.2ex]
		    	 \hspace{1em}
	    	 	 \in   \Stab^{1,[0]}(C\times Y)_{\Bbb R}
		        \end{array}	 \;
			                  \right|\;
			     Q^{(\chi_0,\tilde{\beta}_0)}_{(e,\tilde{\xi})}
	                  (x_B, x_J, x_L)\;
				 =\; 0
			   \right\}\,.			
	$$
  A connected component of
   $$
	  \Stab^{1,[0]}(C\times Y)_{\Bbb R} \;
	   -\,  \bigcup_{\mbox{\scriptsize
	           $\begin{array}{c}
	           (e,\tilde{\xi})\in {\Bbb Z}\times N\!E(C\times Y)\\[.6ex]
			    0\, <\, \tilde{\xi}\, <\, \tilde{\beta}_0
	            \end{array}$}}
	        W^{(\chi_0,\tilde{\beta}_0)}_{(e,\tilde{\xi})}
   $$
   is called a {\it numerical-class-defined chamber}
   of $\Stab^{1,[0]}(C\times Y)_{\Bbb R}$.
 For a given numerical-class-defined wall
   $W^{(\chi_0,\tilde{\beta}_0)}_{(e_0,\tilde{\xi}_0)}
      \subset \Stab^{1,[0]}(C\times Y)_{\Bbb R}$,
  the intersection	
   $$
	 W^{(\chi_0,\tilde{\beta}_0)}_{(e_0,\tilde{\xi}_0)}  \;
	   \bigcap\,  \bigcup_{\mbox{\scriptsize
	           $\begin{array}{c}
	            (e,\tilde{\xi})\in {\Bbb Z}\times N\!E(C\times Y)\\[.6ex]
			     0\, <\, \tilde{\xi}\, <\, \tilde{\beta}_0  \\[.6ex]
				 (e,\tilde{\xi})\; \ne\; (e_0,\tilde{\xi}_0)
	            \end{array}$}}
	        W^{(\chi_0,\tilde{\beta}_0)}_{(e,\tilde{\xi})}
   $$	
   induces a stratification of
   $W^{(\chi_0,\tilde{\beta}_0)}_{(e_0,\tilde{\xi}_0)}$
   by manifolds, which is called the {\it numerical-class-defined stratification}
   of the wall
   $W^{(\chi_0,\tilde{\beta}_0)}_{(e_0,\tilde{\xi}_0)}$.
}\end{definition}

\bigskip

For a fixed $(\chi_0,\tilde{\beta}_0)$,
 the above wall-and-chamber structure on
  $\Stab^{1,[0]}(C\times Y)_{\Bbb R}$ depends only on
    the structure of ${\Bbb Z}\times \NE(C\times Y)$  and
    the pairing
     $H^2(Y;{\Bbb C})\times NE(Y)\rightarrow {\Bbb C}$;
and, hence, the name.
However, it should be noted that
 as long as distinguishing stable conditions is concerned,
 a numerical-class-defined wall
  $W^{(\chi_0,\tilde{\beta}_0)}_{(e,\tilde{\xi})}$
  can truly separate two stability conditions whose associated subcategory
  of semistable objects are different
if, in addition, there exists a $1$-dimensional coherent sheaf $\tilde{\cal F}$
 on $C\times Y$ that contains a proper subsheaf $\tilde{\cal F}^{\prime}$
 such that $\chi(\tilde{\cal F}^{\prime})=e$ and
  $\tilde{\beta}(\tilde{\cal F}^{\prime})=\tilde{\xi}$.
I.e.\ the numerical equivalence class $(e,\tilde{\xi})$ is actually realized
 by some coherent subsheaf in our category.

\bigskip

\begin{definition}
 {\bf [actual walls and chambers in $\Stab^{1,[0]}(C\times Y)$].}
{\rm	
 A numerical-class-defined wall
  $W^{(\chi_0,\tilde{\beta}_0)}_{(e,\tilde{\xi})}$
  is called an {\it actual wall}  if $(e,\tilde{\xi})$	
  is realized by some coherent subsheaf of a coherent sheaf in our category.
 A connected component of
  $$
	\Stab^{1,[0]}(C\times Y)_{\Bbb R} \;
	   -\,  \bigcup_{\mbox{\scriptsize
	           $\begin{array}{c}
	           (e,\tilde{\xi})\in {\Bbb Z}\times N\!E(C\times Y)\\[.6ex]
			    0\, <\, \tilde{\xi}\, <\, \tilde{\beta}_0 \\[.6ex]
				\mbox{$(e,\tilde{\xi})$ is realized by a subobject}
	            \end{array}$}}
	        W^{(\chi_0,\tilde{\beta}_0)}_{(e,\tilde{\xi})}
  $$
  is called an {\it actual chamber}
  of $\Stab^{1,[0]}(C\times Y)_{\Bbb R}$.
  For a given actual wall
   $W^{(\chi_0,\tilde{\beta}_0)}_{(e_0,\tilde{\xi}_0)}
      \subset \Stab^{1,[0]}(C\times Y)_{\Bbb R}$,
  the intersection	
   $$
	 W^{(\chi_0,\tilde{\beta}_0)}_{(e_0,\tilde{\xi}_0)}  \;
	   \bigcap\,   \bigcup_{\mbox{\scriptsize
	           $\begin{array}{c}
	              (e,\tilde{\xi})\in {\Bbb Z}\times N\!E(C\times Y)\\[.6ex]
			       0\, <\, \tilde{\xi}\, <\, \tilde{\beta}_0                            \\[.6ex]
			 	   \mbox{$(e,\tilde{\xi})$ is realized by a subobject}    \\[.6ex]
				  (e,\tilde{\xi})\; \ne\; (e_0,\tilde{\xi}_0)
	            \end{array}$}}
	        W^{(\chi_0,\tilde{\beta}_0)}_{(e,\tilde{\xi})}
   $$	
   induces a stratification of
   $W^{(\chi_0,\tilde{\beta}_0)}_{(e_0,\tilde{\xi}_0)}$
   by manifolds, which is called the {\it actual stratification} of the wall
   $W^{(\chi_0,\tilde{\beta}_0)}_{(e_0,\tilde{\xi}_0)}$.
 }\end{definition}	
	
\bigskip

\noindent
By construction,
the numerical-class-defined wall-and-chamber structure
 on $\Stab^{1,[0]}(C\times Y)_{\Bbb R}$
 is a refinement of the actual wall-and-chamber structure
 on $\Stab^{1,[0]}(C\times Y)_{\Bbb R}$.
The former depends only on the intersection theory
   on $C\times Y$
 while the latter is much harder to decide.

Two immediate consequences follow:

\bigskip

\begin{lemma}
 {\bf [coincidence of semistability and stability].}
 For $(B+\sqrt{-1}J,L)$ in an actual chamber of
   $\Stab^{1,[0]}(C\times Y)_{\Bbb R}$,
  $Z$-semistable implies $Z$-stable and, hence,
  the notion of $Z$-semistability and of $Z$-stability coincide
   for objects in our category.
\end{lemma}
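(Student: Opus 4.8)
The plan is to argue by contradiction: if some object of the category were $Z$-semistable but not $Z$-stable, I will show that the parameter $(B+\sqrt{-1}J,L)$ must lie on an actual wall, contradicting the hypothesis that it lies in an actual chamber. So fix the invariants $(\chi_0,\tilde{\beta}_0):=(\chi(\tilde{\cal F}),\tilde{\beta}(\tilde{\cal F}))$ of a putative strictly $Z$-semistable $1$-dimensional coherent sheaf $\tilde{\cal F}$ on $C\times Y$; by definition of $Z$-semistability $\tilde{\cal F}$ is pure, and strict semistability supplies a nonzero proper subsheaf $\tilde{\cal F}^{\prime}\subset\tilde{\cal F}$ with $\mu^Z(\tilde{\cal F}^{\prime})=\mu^Z(\tilde{\cal F})$.

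The first step is to check that the pair $(e,\tilde{\xi}):=(\chi(\tilde{\cal F}^{\prime}),\tilde{\beta}(\tilde{\cal F}^{\prime}))$ satisfies $0<\tilde{\xi}<\tilde{\beta}_0$, so that it indexes a genuine wall. Since $\tilde{\cal F}$ is purely $1$-dimensional, $\tilde{\cal F}^{\prime}$ is itself $1$-dimensional and $\tilde{\xi}$ is a nonzero effective $1$-cycle, giving $\tilde{\xi}>0$. For the upper bound the only case to rule out is that the quotient $\tilde{\cal F}/\tilde{\cal F}^{\prime}$ be $0$-dimensional: then additivity of $\tilde{\beta}$ and $\chi$ along $0\to\tilde{\cal F}^{\prime}\to\tilde{\cal F}\to\tilde{\cal F}/\tilde{\cal F}^{\prime}\to 0$ (Lemma~2.1.6) would give $\tilde{\beta}(\tilde{\cal F}^{\prime})=\tilde{\beta}_0$ but $\chi(\tilde{\cal F}^{\prime})<\chi_0$, which via the explicit form of $\mu^Z$ (Definition~2.1.4) forces $\mu^Z(\tilde{\cal F}^{\prime})<\mu^Z(\tilde{\cal F})$, contradicting the slope equality; hence $\tilde{\cal F}/\tilde{\cal F}^{\prime}$ is $1$-dimensional, $\tilde{\beta}_0-\tilde{\xi}=\tilde{\beta}(\tilde{\cal F}/\tilde{\cal F}^{\prime})$ is a nonzero effective class, and $\tilde{\xi}<\tilde{\beta}_0$.

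The second step is to match the slope equality with the wall equation. Writing $\mu^Z(\tilde{\cal G})=(\chi(\tilde{\cal G})-B\cdot\tilde{\beta}(\tilde{\cal G}))/((J+L)\cdot\tilde{\beta}(\tilde{\cal G}))$ and clearing the denominators $(J+L)\cdot\tilde{\xi}$ and $(J+L)\cdot\tilde{\beta}_0$ --- both nonzero since $J$ is a K\"{a}hler class and $\tilde{\xi},\tilde{\beta}_0$ are nonzero effective $1$-cycles --- the equality $\mu^Z(\tilde{\cal F}^{\prime})=\mu^Z(\tilde{\cal F})$ becomes exactly $Q^{(\chi_0,\tilde{\beta}_0)}_{(e,\tilde{\xi})}(B,J,L)=0$, where $Q$ is the polynomial of Definition~2.3.3. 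Thus $(B+\sqrt{-1}J,L)$ lies on the numerical-class-defined wall $W^{(\chi_0,\tilde{\beta}_0)}_{(e,\tilde{\xi})}$; since $0<\tilde{\xi}<\tilde{\beta}_0$ and $(e,\tilde{\xi})$ is realized by the honest subsheaf $\tilde{\cal F}^{\prime}\subset\tilde{\cal F}$ of an object of our category, this is an actual wall in the sense of Definition~2.3.4, so $(B+\sqrt{-1}J,L)$ cannot lie in an actual chamber. This contradiction shows every $Z$-semistable object of our category is $Z$-stable; the reverse inclusion is immediate from the definitions, so the two notions coincide.

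I expect the only subtle point --- and it is minor --- to be the bookkeeping of the first step: ensuring that the destabilizing subsheaf yields $\tilde{\xi}$ strictly between $0$ and $\tilde{\beta}_0$ (in particular excluding the $0$-dimensional-quotient degeneracy) and that the denominators cleared in the second step are genuinely nonzero. No boundedness or deformation-theoretic input enters; the statement is a formal consequence of the definition of $\mu^Z$, of actual walls and chambers, and of additivity of $\tilde{\beta}$ and $\chi$.
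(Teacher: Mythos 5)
Your argument is correct and is exactly the one the paper intends: the lemma is stated there without proof as an immediate consequence of the wall/chamber definitions, and your contrapositive — a strictly $Z$-semistable object with equal-slope proper subsheaf forces $Q^{(\chi_0,\tilde{\beta}_0)}_{(e,\tilde{\xi})}=0$ for a realized class $(e,\tilde{\xi})$ with $0<\tilde{\xi}<\tilde{\beta}_0$, i.e.\ the parameter lies on an actual wall — fills in precisely that reasoning, including the needed exclusion of the $0$-dimensional quotient case and the positivity of the cleared denominators.
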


\bigskip

\begin{lemma}
 {\bf [equivalent central charges].}
 If $Z_1$ and $Z_2$ lie in the same actual chamber
   of $\Stab^{1,[0]}(C\times Y)_{\Bbb R}$,
 then
  an object $\tilde{\cal F}$ in our category
      is $Z_1$-(semi)stable
  if and only if it is $Z_2$-(semi)stable.
\end{lemma}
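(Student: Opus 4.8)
The plan is to derive this from the wall-and-chamber structure by tracking, across a path in an actual chamber, the only way (semi)stability can change. First I would recall from Definition~2.2.1 and Proposition~2.2.3 that a $1$-dimensional coherent sheaf $\tilde{\cal F}$ in our category (fixed $(\chi_0,\tilde{\beta}_0)$) fails to be $Z$-(semi)stable precisely when there is a saturated proper subsheaf $\tilde{\cal F}^{\prime}\subset\tilde{\cal F}$ with $\mu^Z(\tilde{\cal F}^{\prime})>\mu^Z(\tilde{\cal F})$ (resp.\ $\geq$). The numerical data of such a destabilizing subsheaf is a pair $(e,\tilde{\xi})=(\chi(\tilde{\cal F}^{\prime}),\tilde{\beta}(\tilde{\cal F}^{\prime}))\in{\Bbb Z}\times \NE(C\times Y)$ with $0<\tilde{\xi}<\tilde{\beta}_0$; and, crucially, this pair is \emph{realized by a subobject}, so it defines an \emph{actual} wall $W^{(\chi_0,\tilde{\beta}_0)}_{(e,\tilde{\xi})}$.

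Next I would make precise the sign computation already displayed in the excerpt just before Definition~2.3.3: for a given pair $(e,\tilde{\xi})$ realized by $\tilde{\cal F}^{\prime}\subset\tilde{\cal F}$, the sign of $\mu^Z(\tilde{\cal F}^{\prime})-\mu^Z(\tilde{\cal F})$ at a stability condition $(x_B+\sqrt{-1}x_J,x_L)$ equals (up to the positive factor $\left((x_J,x_L)\cdot\tilde{\xi}\right)\left((x_J,x_L)\cdot\tilde{\beta}_0\right)$, both positive since $J+L$ is ample and $\tilde{\xi},\tilde{\beta}_0$ are nonzero effective) the sign of $Q^{(\chi_0,\tilde{\beta}_0)}_{(e,\tilde{\xi})}(x_B,x_J,x_L)$. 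Hence, for $Z$ in the complement of $\bigcup W^{(\chi_0,\tilde{\beta}_0)}_{(e,\tilde{\xi})}$ over all subobject-realized $(e,\tilde{\xi})$, the sign of $\mu^Z(\tilde{\cal F}^{\prime})-\mu^Z(\tilde{\cal F})$ is never zero for any subobject $\tilde{\cal F}^{\prime}$ of any $\tilde{\cal F}$ in the category; and within a single actual chamber $\mathcal{C}$, this sign is constant in $Z$ (the chamber is connected and each $Q_{(e,\tilde{\xi})}$ does not vanish on it, so $Q_{(e,\tilde{\xi})}$ has fixed sign on $\mathcal{C}$).

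Putting these together: fix $\tilde{\cal F}$ in the category and $Z_1,Z_2$ in the same actual chamber $\mathcal{C}$. Suppose $\tilde{\cal F}$ is $Z_1$-(semi)stable but not $Z_2$-(semi)stable. Then by Proposition~2.2.3 there is a saturated proper $\tilde{\cal F}^{\prime}\subset\tilde{\cal F}$ with $\mu^{Z_2}(\tilde{\cal F}^{\prime})>\mu^{Z_2}(\tilde{\cal F})$ (resp.\ $\geq$, with strict violation of stability). Its class $(e,\tilde{\xi})$ is subobject-realized, so $Q_{(e,\tilde{\xi})}$ has a fixed nonzero sign on $\mathcal{C}$, and by the sign computation $\mu^{Z_1}(\tilde{\cal F}^{\prime})-\mu^{Z_1}(\tilde{\cal F})$ has the same (positive, resp.\ nonnegative-and-somewhere-positive) sign, contradicting $Z_1$-(semi)stability of $\tilde{\cal F}$. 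Exchanging $Z_1\leftrightarrow Z_2$ gives the reverse implication, proving the lemma. For the semistable-vs-stable bookkeeping I would note that within an actual chamber no $Q_{(e,\tilde{\xi})}$ vanishes, so $\mu^Z(\tilde{\cal F}^{\prime})=\mu^Z(\tilde{\cal F})$ is impossible for a subobject-realized class; this is exactly Lemma~2.3.5, and it lets one treat the ``$\le$'' and ``$<$'' cases uniformly.

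The main obstacle is a subtle one of realizability: I must be sure that the destabilizing subsheaf produced at $Z_2$ has numerical class $(e,\tilde{\xi})$ lying strictly between $0$ and $\tilde{\beta}_0$ \emph{and} genuinely realized by a subobject of an object in the category, so that it corresponds to an \emph{actual} wall rather than merely a numerical-class-defined one — otherwise the chambers $\mathcal{C}$ (defined by removing only actual walls) would not be fine enough to force the sign to be locally constant. This is handled by observing that $\tilde{\cal F}^{\prime}\subset\tilde{\cal F}$ with $\tilde{\cal F}$ in the category \emph{is} by definition a subobject realizing $(e,\tilde{\xi})$; the point is just to not conflate numerical subclasses with realized ones. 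A secondary, purely technical point is verifying that the positive prefactor $\left((x_J,x_L)\cdot\tilde{\xi}\right)\left((x_J,x_L)\cdot\tilde{\beta}_0\right)$ is indeed strictly positive throughout $\Stab^{1,[0]}(C\times Y)_{\Bbb R}$, which follows since $J\in\KCone(Y)$, $L\in\DCG^+(C)_{\Bbb R}$, so $J+L$ pairs strictly positively with every nonzero class in $\overline{\NE}_1(C\times Y)$ by Kleiman's criterion.
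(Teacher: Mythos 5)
Your proposal is correct and is essentially the argument the paper intends: the lemma is stated there as an immediate consequence of the wall construction, and your write-up just makes explicit the sign computation preceding Definition~2.3.3 (that $\mu^Z(\tilde{\cal F}^{\prime})-\mu^Z(\tilde{\cal F})$ has the sign of $Q^{(\chi_0,\tilde{\beta}_0)}_{(e,\tilde{\xi})}$ after clearing the positive denominators) together with the fact that a destabilizing subsheaf realizes its numerical class, so the corresponding wall is actual and $Q$ keeps a constant nonzero sign on the connected actual chamber. No gaps.
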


\bigskip

\begin{example}
{\bf [chamber structure on $\Stab^{1,[0]}({\Bbb P}^1\times Y)_{\Bbb R}$,
           $Y$ quintic Calabi-Yau $3$-fold].}  {\rm
 Let $Y$ be a quintic Calabi-Yau $3$-fold in ${\Bbb P}^4$.
 Then,
  $\dim H_2(Y,{\Bbb R})=  h^{1,1}=1$;
  $\DCG^+({\Bbb P}^1)={\Bbb Z}_{>0}$ and, hence;
  $\Stab^{1,[0]}({\Bbb P}^1\times Y)_{\Bbb R}
      \simeq {\Bbb R}\times {\Bbb R}_{>0}\times {\Bbb R}_{>0}$,
	whose coordinates will be denoted by $(x_B, x_J, x_L)$;  and
  $\NE({\Bbb P}^1\times Y)= {\Bbb Z}_{>0}\oplus  {\Bbb Z}_{>0}$.	
 Let $(e,\tilde{\xi})=(e; m,n)
           \in {\Bbb Z}\times ({\Bbb Z}_{>0}\oplus {\Bbb Z}_{>0})$.
 For a fixed $(\chi_0,\tilde{\beta}_0)= (\chi_0; m_0,n_0)$,
the wall $W^{(\chi_0; m_0,n_0)}_{(e; m,n)}$,
  with $0<m<m_0$ and $0<n<n_0$, is given by zero-locus
  of the function in $(x_B,x_J,x_L)$
 $$
   \begin{array}{l}
    (e-nx_B)(m_0x_L+n_0x_J)
	   - (\chi_0-n_0x_B)(mx_L+nx_J)\\[1.6ex]
	  \hspace{1em}
      =\;  \begin{vmatrix}e & m \\ \chi_0 & m_0 \end{vmatrix} x_L\,
			+\, \begin{vmatrix}e & n \\ \chi_0 & n_0 \end{vmatrix} x_J\,
			+\,\begin{vmatrix}m & n \\ m_0 & n_0 \end{vmatrix} x_Bx_L\,,
   \end{array}
 $$
  which is
    a truncated hyperbolic paraboloid (i.e.\ saddle-shaped quadric surface)
      in ${\Bbb R}\times {\Bbb R}_{>0}\times {\Bbb R}_{>0}$
     if $\begin{vmatrix}e & n \\ \chi_0 & n_0 \end{vmatrix}\ne 0$,
	 and
   the union of two truncated hyperplanes, described by the equation
    $
	   x_L
	    \Big(\begin{vmatrix}e & m \\ \chi_0 & m_0 \end{vmatrix}\,
	     +\,\begin{vmatrix}m & n \\ m_0 & n_0 \end{vmatrix} x_B
		\Big) =0$
     if  $\begin{vmatrix}e & n \\ \chi_0 & n_0 \end{vmatrix}=0$.
  %
  %
 The latter can occur only for finitely many $(e,n)$'s.
 In general, when $e\rightarrow\pm$, the hyperbolic paraboloid
  tends to the hyperplane $n_0x_J+m_0x_L=0$, which has no intersection
  with $\Stab^{1,[0]}(C\times Y)_{\Bbb R}$.
 Thus we have a locally finite system of walls that defines
  the numerical-class-defined chamber structure of
  $\Stab^{1,[0]}(C\times Y)_{\Bbb R}$.
 This turns out to be a general feature.
}\end{example}

\bigskip

\bigskip

\begin{flushleft}
{\bf Local finiteness of walls.}
\end{flushleft}
\begin{proposition}
 {\bf [local finiteness of walls].}
  For any $p\in Stab^{1,[0]}(C\times Y)_{\Bbb R}$,
  there exists an open neighborhood $U$ of $p$
     in $\Stab^{1,[0]}(C\times Y)_{\Bbb R}$
   such that there are only finitely many $(e,\tilde{\xi})$
    with $W^{(\chi_0,\tilde{\beta}_0)}_{(e,\tilde{\xi})}
	          \cap U \ne \emptyset$.
\end{proposition}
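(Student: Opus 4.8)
The plan is to exploit that each wall $W^{(\chi_0,\tilde{\beta}_0)}_{(e,\tilde{\xi})}$ is the zero set of the \emph{single} equation $Q^{(\chi_0,\tilde{\beta}_0)}_{(e,\tilde{\xi})}=0$ of Definition~2.3.3, and to control how this equation depends on the index $(e,\tilde{\xi})$: the curve class $\tilde{\xi}$ will be confined to a finite set, and, once $\tilde{\xi}$ is fixed, the equation will be solvable for $e$ as a continuous — hence locally bounded — function of the stability parameter. Concretely, I would first pick a bounded open neighborhood $U$ of $p$ with $\overline{U}\subset \Stab^{1,[0]}(C\times Y)_{\Bbb R}$; this is possible since $\Stab^{1,[0]}(C\times Y)_{\Bbb R}=H_2(Y;{\Bbb R})\times\sqrt{-1}\cdot\KCone(Y)\times\DCG^+(C)_{\Bbb R}$ is a locally compact subspace of a finite-dimensional real vector space (the first factor is a vector space, the second an open cone, the third a cone not containing the origin). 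On $\Stab^{1,[0]}(C\times Y)_{\Bbb R}$ the continuous function $(x_B+\sqrt{-1}\,x_J,x_L)\mapsto (x_J,x_L)\cdot\tilde{\beta}_0$ is everywhere strictly positive — it is $-\Imaginary$ of the twisted central charge of a $1$-dimensional sheaf of class $\tilde{\beta}_0$ evaluated at that point, cf.\ Lemma~2.1.2 — so on $\overline{U}$ it is bounded below by some $\delta>0$, while $|x_B\cdot\tilde{\beta}_0|$ and $(x_J,x_L)\cdot\tilde{\beta}_0$ are bounded above there.

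Next I would show that only finitely many classes $\tilde{\xi}$ occur. Fix the relative ample class $H$. Every wall index satisfies $0<\tilde{\xi}<\tilde{\beta}_0$ in $\NE(C\times Y)$, hence $0<H\cdot\tilde{\xi}<H\cdot\tilde{\beta}_0$. By Kleiman's ampleness criterion — the same ingredient used in the proof of Lemma~2.1.7 — the set of effective $1$-cycle classes of bounded $H$-degree is finite, so $\tilde{\xi}$ lies in a finite set $\Xi_0$.

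For each fixed $\tilde{\xi}\in\Xi_0$, the equation $Q^{(\chi_0,\tilde{\beta}_0)}_{(e,\tilde{\xi})}(x_B,x_J,x_L)=0$ is equivalent to
$$
 e\;=\; x_B\cdot\tilde{\xi}\,+\,\bigl(\chi_0-x_B\cdot\tilde{\beta}_0\bigr)\,
   \frac{(x_J,x_L)\cdot\tilde{\xi}}{(x_J,x_L)\cdot\tilde{\beta}_0}\;=:\;g_{\tilde{\xi}}(x_B,x_J,x_L)\,,
$$
and $g_{\tilde{\xi}}$ is continuous on $\overline{U}$ because the denominator is $\ge\delta>0$ there; hence $g_{\tilde{\xi}}(\overline{U})$ is a bounded subset of ${\Bbb R}$. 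Therefore $W^{(\chi_0,\tilde{\beta}_0)}_{(e,\tilde{\xi})}\cap U\neq\emptyset$ forces $e\in g_{\tilde{\xi}}(\overline{U})\cap{\Bbb Z}$, a finite set. Running over the finitely many $\tilde{\xi}\in\Xi_0$ shows that only finitely many $(e,\tilde{\xi})$ have a wall meeting $U$, which is the assertion.

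The continuity-and-compactness bookkeeping above is routine; the two substantive inputs — and the only places where something could go wrong — are the finiteness of $\Xi_0$ (for which Kleiman's boundedness, exactly as already invoked in Lemma~2.1.7, is the right tool) and the uniform positivity of $(x_J,x_L)\cdot\tilde{\beta}_0$ on $\overline{U}$, which is precisely what the factor $\sqrt{-1}\cdot\KCone(Y)\times\DCG^+(C)$ in the definition of $\Stab^{1,[0]}(C\times Y)_{\Bbb R}$ supplies and which keeps the coefficient of $e$ in $Q^{(\chi_0,\tilde{\beta}_0)}_{(e,\tilde{\xi})}$ bounded away from $0$. Without the effectivity/ampleness built into the space this coefficient could degenerate and walls could accumulate at $p$, so the (mild) main obstacle is simply to record these two facts correctly and to ensure $\overline U$ is chosen inside the open part of the space.
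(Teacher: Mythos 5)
Your proposal is correct and takes essentially the same route as the paper's own proof: both arguments rest on the finiteness of the possible classes $\tilde{\xi}$ with $0<\tilde{\xi}<\tilde{\beta}_0$ together with the strict positivity of $(x_J,x_L)\cdot\tilde{\beta}_0$ (the coefficient of $e$) near $p$, which forces $e$ into a bounded set of integers for walls meeting a small neighborhood. The only difference is presentational — the paper phrases the $e$-control as an asymptotic statement (as $e\to\pm\infty$ the wall equations approximate $(x_J,x_L)\cdot\tilde{\beta}_0=0$, which has no solutions in $\Stab^{1,[0]}(C\times Y)_{\Bbb R}$) and takes the finiteness of $\tilde{\xi}$ for granted, whereas you solve the wall equation explicitly for $e$ and justify the finiteness of $\tilde{\xi}$ by the Kleiman-type degree bound already used in Lemma~2.1.7.
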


\begin{proof}
 Recall the defining equation for a wall:
  $$
    Q^{(\chi_0,\tilde{\beta}_0)}_{(e,\tilde{\xi})}
	  (x_B, x_J, x_L)\;
	  :=\;  \left( e - x_B\cdot\tilde{\xi}\right)
        	    \left((x_J, x_L)\cdot \tilde{\beta}_0 \right)\,
			  -\, \left(\chi_0 -  x_B\cdot\tilde{\beta}_0	\right)
			         \left((x_J, x_L)\cdot \tilde{\xi}\right)\;
	  =\; 0\,.
  $$
 Since
   $\chi_0$ and $\tilde{\beta}_0$ are fixed and
   there are only finitely many choices of $\tilde{\xi}$,
 we only need to focus on the behavior of equation
  when $e\rightarrow \pm\infty$.
 In which case, the equations are approximating
  the fixed equation $(x_J,x_L)\cdot\tilde{\beta}_0=0$.
 But $(x_J,x_L)\cdot\tilde{\beta}_0=0$
  has no solution in
  $\Stab^{1,[0]}(C\times Y)_{\Bbb R}$.
 This proves the proposition.

\end{proof}

\bigskip

\bigskip

\begin{flushleft}
{\bf Behavior of the moduli stack of stable objects when crossing an actual wall:\\
         a conjecture. }
\end{flushleft}
Let $\Delta^-$ and $\Delta^+$ be two actual chambers in
 $\Stab^{1,[0]}(C\times Y))_{\Bbb R}$
 such that
  both $\Delta^-\cap \Stab^{1,[0]}(C\times Y)$ and
     $\Delta^+\cap \Stab^{1,[0]}(C\times Y)$ are non-empty
  and that the intersection
    $\overline{\Delta^-}\cap \overline{\Delta^+}$
	of their closure contains a maximal stratum of a wall
    $W^{(\chi_0,\tilde{\beta}_0)}_{(e_0,\tilde{\xi}_0)}$.
Then,
 $W^{(\chi_0,\tilde{\beta}_0)}_{(e_0,\tilde{\xi}_0)}$
 is the only wall that intersects the interior
  $\Int (\overline{\Delta^-}\cup \overline{\Delta^+})$	
  of $\overline{\Delta^-}\cup \overline{\Delta^+}$.
Recall the defining equation
 $Q^{(\chi_0,\tilde{\beta}_0)}_{(e_0,\tilde{\xi}_0)}$
 of $W^{(\chi_0,\tilde{\beta}_0)}_{(e_0,\tilde{\xi}_0)}$.
Let $p^-\in \Delta^-$, $p^+\in\Delta^+$,  $p^0 \in$
 a maximal stratum in
 $W^{(\chi_0,\tilde{\beta}_0)}_{(e_0,\tilde{\xi}_0)}$
 that lies in $\overline{\Delta^-}\cap \overline{\Delta^+}$.
Let $Z^-$, $Z^+$, $Z^0$ be their associate central charge functional
 respectively.
Up to a relabelling, we may assume that
 $Q^{(\chi_0,\tilde{\beta}_0)}_{(e_0,\tilde{\xi}_0)}
     (p^-)<0$   and
 $Q^{(\chi_0,\tilde{\beta}_0)}_{(e_0,\tilde{\xi}_0)}
       (p^+)>0$.
Let ${\cal S}^-$ (resp.\ ${\cal S}^0$, ${\cal S}^+$)
 be the moduli stack of $Z^-$-stable
 (resp.\  $Z^-$-semistable, $Z^+$-stable)
 $1$-dimensional coherent sheaves on $C\times Y$
 with Euler characteristic $\chi_0$ and curve class $\tilde{\beta}_0$.
	
Given any nonzero proper inclusion pair
 $\tilde{\cal F}^{\prime}\subset \tilde{\cal F }$
 with $\chi(\tilde{\cal F})=\chi_0$ and
   $\tilde{\beta}(\tilde{\cal F})=\tilde{\beta}_0$,
 there are four situations:
 \begin{itemize}
  \item[(1)]
   $\mu^{Z^-}(\tilde{\cal F}^{\prime})- \mu^{Z^-}(\tilde{\cal F})<0\;$ and
   $\;\mu^{Z^+}(\tilde{\cal F}^{\prime})- \mu^{Z^+}(\tilde{\cal F})<0$.

  \item[(2)]
   $\mu^{Z^-}(\tilde{\cal F}^{\prime})- \mu^{Z^-}(\tilde{\cal F})>0\;$ and
   $\;\mu^{Z^+}(\tilde{\cal F}^{\prime})- \mu^{Z^+}(\tilde{\cal F})<0$.\

  \item[(3)]
   $\mu^{Z^-}(\tilde{\cal F}^{\prime})- \mu^{Z^-}(\tilde{\cal F})<0\;$ and
   $\;\mu^{Z^+}(\tilde{\cal F}^{\prime})- \mu^{Z^+}(\tilde{\cal F})>0$.

  \item[(4)]
   $\mu^{Z^-}(\tilde{\cal F}^{\prime})- \mu^{Z^-}(\tilde{\cal F})>0\;$ and
   $\;\mu^{Z^+}(\tilde{\cal F}^{\prime})- \mu^{Z^+}(\tilde{\cal F})>0$.
 \end{itemize}
When one deforms both the stability conditions $Z^-$, $Z^+$ to $Z^0$,
 each strict inequality becomes at worse inequality of the same direction
 (i.e.\ $<$ becomes $<$ or $\le$
        and $>$ becomes $>$ or $\ge$ ).
It follows that one has tautological morphisms of stacks:
 $$
   \xymatrix{
     {\cal S}^-\ar[rrd]  &&&& {\cal S}^+\ar[lld] \\
       && {\cal S}^0          &&
   }
 $$
 by sending a family of $Z^-$-stabe (resp.\ $Z^+$-stable) objects
  to the same family of objects, which now become a family
  of $Z^0$-semistable objects.
In view that in good cases our non-Geometric-Invariant-Theory situation
  is a deformation from a Geometric-Invariant-Theory situation
 and together with the compactness of
 ${\cal S}^-$, ${\cal S}^0$, and ${\cal S}^+$,
 to be proved in the next section (Theorem~3.1),
 one has the following conjecture:

\bigskip

\begin{conjecture}
 {\bf [wall-crossing of moduli stacks].}
 There exists projective schemes $S^-$, $S^0$, and $S^+$,
     with built-in morphisms
      ${\cal S}^-\rightarrow S^-$,
      ${\cal S}^0\rightarrow S^0$, ${\cal S}^+\rightarrow S^+$
     that parameterize $S$-equivalence classes of semistable objects
     in the related moduli stack,
  and birational surjections $S^-\longrightaarrow S^0 \longleftaarrow S^+$
  such that the following diagram commute
  $$
   \xymatrix{
    & {\cal S}^- \ar[rrd] \ar[dd]   &&&& {\cal S}^+ \ar[lld] \ar[dd]    \\
    &   && {\cal S}^0 \ar[dd]                                                                     \\
    &   S^-\ar@{->>}[rrd]            &&&&   S^+\ar@{->>}[lld]             \\
    &   &&   S^0     &&&.
    }
 $$
\end{conjecture}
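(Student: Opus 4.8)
\begin{flushleft}
{\bf Remarks toward a proof of the conjecture.}
\end{flushleft}

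The plan is to reduce the whole situation to a single variation-of-Geometric-Invariant-Theory (VGIT) problem and then invoke the wall-crossing machinery of Thaddeus and of Dolgachev--Hu, in the sheaf-theoretic form developed for sheaves by Qin~[Qin] (and by Ellingsrud--G\"{o}ttsche and by Matsuki--Wentworth). The first step is an exact identification: from the explicit form of $\mu^Z$ (Lemma~2.1.2, Definition~2.1.4), for a one-dimensional sheaf $\tilde{\cal F}$ on $C\times Y$ with a subsheaf $\tilde{\cal F}^{\prime}$ the comparison $\mu^Z(\tilde{\cal F}^{\prime})$ versus $\mu^Z(\tilde{\cal F})$ is precisely the comparison, for $m\gg 0$, of the $B$-twisted reduced Hilbert polynomials of $\tilde{\cal F}^{\prime}$ and $\tilde{\cal F}$ with respect to the ${\Bbb R}$-polarization $J+L$ on the projective scheme $C\times Y$; hence $Z$-(semi)stability is exactly $B$-twisted Gieseker (semi)stability for the polarization $J+L$. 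Replacing $(B,J+L)$ by a rational pair in the same numerical chamber for the fixed type $(\chi_0,\tilde{\beta}_0)$ (legitimate by the local finiteness of walls, Proposition~2.3.8), one reduces to twisted Gieseker stability with integral data, and the passage from $Z^-$ through $Z^0$ to $Z^+$ becomes a polarization/twist path crossing the single numerical wall $W^{(\chi_0,\tilde{\beta}_0)}_{(e_0,\tilde{\xi}_0)}$ of the classical theory -- indeed the only wall meeting $\Int(\overline{\Delta^-}\cup\overline{\Delta^+})$.

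Second, set up the GIT. Using the boundedness of Section~3 (Theorem~3.1, specialized to the one-point family $\{[C]\}$) together with the projectivity of the relevant $\Quot$-schemes (Proposition~2.1.9), realize all one-dimensional sheaves of type $(\chi_0,\tilde{\beta}_0)$ that occur as an $\SL(V)$-invariant open subscheme $R$ of a projective $\Quot$-scheme, with (the Simpson/Le~Potier construction, adapted to the twist) $\SL(V)$-linearizations $\ell_{Z^-}$, $\ell_{Z^0}$, $\ell_{Z^+}$ on $R$ whose (semi)stable loci are exactly the points with $Z^\pm$- or $Z^0$-(semi)stable underlying sheaf. Then $S^0:=R^{ss}(\ell_{Z^0})/\!\!/\SL(V)$ is a projective scheme corepresenting $Z^0$-semistable objects up to $S$-equivalence (Proposition~2.2.14), with the tautological ${\cal S}^0\to S^0$; by Lemma~2.3.5 the chamber linearizations satisfy $R^{ss}(\ell_{Z^\pm})=R^{s}(\ell_{Z^\pm})$, so $S^\pm:=R^{ss}(\ell_{Z^\pm})/\!\!/\SL(V)$ is the coarse space of ${\cal S}^\pm$, and all three of $S^-,S^0,S^+$ are projective (again using the compactness of Theorem~3.1).

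Third, produce the morphisms. The semicontinuity already recorded in the paragraphs above ($Z^\pm$-stable $\Rightarrow Z^0$-semistable) gives $R^{ss}(\ell_{Z^\pm})\subseteq R^{ss}(\ell_{Z^0})$ as $\SL(V)$-invariant opens, so the functoriality of GIT quotients yields projective morphisms $S^\pm\to S^0$; these fit into the asserted commuting square because every map in sight is ``send a family of objects to itself, remembering only the $S$-equivalence class of its polystable representative'', and these assignments are mutually compatible. Over the open locus $S^0_\circ\subseteq S^0$ of $S$-equivalence classes admitting a representative with no subsheaf of numerical class $(e_0,\tilde{\xi}_0)$, the three stability notions agree (once more because $W^{(\chi_0,\tilde{\beta}_0)}_{(e_0,\tilde{\xi}_0)}$ is the only wall crossed), so $S^\pm\to S^0$ restrict to isomorphisms over $S^0_\circ$, which gives birationality as soon as $S^0_\circ$ is known to be dense in $S^0$.

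The main obstacle lies in the last two points in the generality at hand: density of $S^0_\circ$ (equivalently, that no component of $S^0$ sits entirely on the wall) and surjectivity of $S^-\longrightaarrow S^0\longleftaarrow S^+$ (equivalently, that $S^0$ is genuinely the \emph{common} contraction rather than something strictly larger). The clean route to the latter is Thaddeus's master-space construction: the GIT quotient of $R\times{\Bbb P}^1$ by $\SL(V)\times{\Bbb G}_m$ under a linearization interpolating between $\ell_{Z^-}\boxtimes{\cal O}$ and $\ell_{Z^+}\boxtimes{\cal O}$ has $S^-$ and $S^+$ as its two extremal quotients and maps onto $S^0$ at the interior wall, forcing $S^\pm\to S^0$ to be surjective. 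Pushing the master-space argument through for a \emph{nodal} $C$ and a genuinely ${\Bbb R}$-valued twisted polarization, and securing density of $S^0_\circ$ without assuming irreducibility a priori -- a Langton-type degeneration, over a discrete valuation ring, of a $Z^0$-semistable sheaf into one that is generically $Z^\pm$-stable being the natural substitute -- is exactly what keeps the statement a conjecture for now.
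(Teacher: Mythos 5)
You are addressing a statement that the paper itself leaves as a conjecture: the authors offer only the tautological morphisms of stacks ${\cal S}^\pm\rightarrow{\cal S}^0$, the compactness of Theorem~3.1, and the heuristic that in good cases the situation is a deformation of a Geometric-Invariant-Theory problem. There is therefore no proof in the paper to compare yours against, and your sketch is a genuine strategy going beyond what the authors record. As a strategy it is the natural one, and its first step is sound: in dimension one the comparison of $Z$-slopes (Lemma~2.1.2, Definition~2.1.4) is exactly a $B$-twisted slope/Gieseker comparison for the real polarization $J+L$ on $C\times Y$, and the plan of realizing all three stability conditions as linearizations on one $\Quot$-scheme and invoking VGIT wall-crossing (Thaddeus, Dolgachev--Hu, with [Qin] as the sheaf-theoretic precedent) is consistent with the authors' stated expectation.

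But it is not a proof, as you yourself say, and the gaps you name are the genuine ones; let me sharpen where they bite. (i) The existence of a projective $S^0$ corepresenting $S$-equivalence classes of $Z^0$-semistable objects is itself part of the conjecture's content, and your second step assumes it: matching a Simpson/Le~Potier linearization to a stability condition with \emph{real} twist $B$ and \emph{real} polarization $J+L$ on the possibly reducible, singular $C\times Y$ requires replacing $(B,J+L)$ by rational data without changing the semistable subcategory. For the chamber points $Z^\pm$ the local finiteness of walls does this, but $Z^0$ lies \emph{on} the wall, so the perturbation must stay inside the given maximal stratum of $W^{(\chi_0,\tilde{\beta}_0)}_{(e_0,\tilde{\xi}_0)}$ and away from all other walls, and one must then verify that the GIT semistable locus equals the $Z^0$-semistable locus including the strictly semistable points; this is precisely where ``deformation from a GIT situation'' is a hope rather than a fact. (ii) The inclusions $R^{ss}(\ell_{Z^\pm})\subseteq R^{ss}(\ell_{Z^0})$ give proper morphisms $S^\pm\rightarrow S^0$ with closed image, but surjectivity and birationality are not formal: without your Langton-type degeneration showing that every $Z^0$-polystable sheaf specializes from (or is $S$-equivalent to a limit of) $Z^\pm$-semistable ones, the image could miss whole components, and density of the locus where all three stability notions agree can fail if a component of the moduli sits entirely on the wall; the master space yields the flip diagram only after the matching in (i) is secured. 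So your proposal is a reasonable and more detailed road map than the paper provides, but it does not close the conjecture, and your own final paragraph correctly identifies why.
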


\bigskip

\begin{remark}
{\it $[$stringy correction/deformation to walls/chambers$]$.} {\rm
 It should be noted that
 when the stringy correction $O(\alpha^{\prime})$
   to our central charge functional $Z$ is taken into account,
 the walls and, hence, the chamber structure on $\Stab^{1,[0]}(C\times Y)_{\Bbb R}$
   described in this subsection are subject to a corresponding
   stringy correction/deformation as well.
}\end{remark}

\bigskip

\bigskip

\section{Compactness of the moduli stack
                  $\FM^{1,[0];\smallZss}_{C_{\cal M}/{\cal M}}(Y;c)$
                  of $Z$-semistable Fourier-Mukai transforms}

We now prove the main theorem of this note:

\bigskip

\begin{stheorem}
 {\bf [$\FM^{1,[0];\scriptsizeZss}_{C_{\cal M}/{\cal M}}(Y;c)$ compact].}
   Let
    $(Y, B+\sqrt{-1}J)$ be a projective Calabi-Yau $3$-fold
       with a fixed complexified K\"{a}hler class,
    ${\cal M}$  be a compact stack of nodal curves,
    $C_{\cal M}/{\cal M}$ be the associated universal curve over ${\cal M}$
	   with a fixed relative polarization class $L$.
   Then
     the moduli stack   $\FM^{1,[0];\scriptsizeZss}_{C_{\cal M}/{\cal M}}(Y;c)$
      of $Z^{B+\sqrt{-1}J,L}$-semi-stable Fourier-Mukai transforms
	     of dimension $1$,  width $[0]$, and central charge $c\in \hat{\Bbb H}_-$
     from fibers of $C_{\cal M}/{\cal M}$  to $Y$
	 is compact.
\end{stheorem}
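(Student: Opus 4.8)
The plan is to prove compactness in the two classical steps for moduli of semistable sheaves: first \emph{boundedness}, so that the stack is of finite type over ${\cal M}$ and in particular quasi-compact, and then \emph{completeness}, i.e.\ the existence part of the valuative criterion, so that the stack is universally closed. (One should note that separatedness genuinely fails once strictly $Z$-semistable objects are present, cf.\ the $S$-equivalence of Definition~2.2.15; ``compact'' is therefore to be read as finite type together with universal closedness, the separated object being the good moduli space of $S$-equivalence classes.) Since ${\cal M}$ is a compact Artin stack of nodal curves, fix a smooth atlas $S\to{\cal M}$ of finite type, write $C_S/S$ for the pulled-back family, and let $H$ be a relative hyperplane class on $(C_S\times Y)/S$, giving an $S$-embedding $C_S\times Y\hookrightarrow{\Bbb P}^N_S$ as in the proof of Lemma~2.1.7. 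After this base change the stack is covered by the stack whose objects are $1$-dimensional coherent sheaves $\tilde{\cal F}$ on the fibres of $C_S\times Y\to S$ that are $Z^{B+\sqrt{-1}J,L}$-semistable with $Z^{B+\sqrt{-1}J,L}(\tilde{\cal F})=c$, and it suffices to work in this relative situation. (Since we only ever handle honest sheaves of dimension $1$, the width $[0]$ condition is automatic throughout.)

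\emph{Boundedness.} By Lemma~2.1.7 the Hilbert polynomial $P^H(\tilde{\cal F})$ lies in the finite set $\Poly^H(Z^{B+\sqrt{-1}J,L}=c)$, so $\mu^P(\tilde{\cal F})$ assumes only finitely many values. For a $Z$-semistable $\tilde{\cal F}$ and any nonzero subsheaf $\tilde{\cal F}'\subset\tilde{\cal F}$ one has $\mu^Z(\tilde{\cal F}')\le\mu^Z(\tilde{\cal F})$, so the two-sided comparison $c_1\mu^P(\tilde{\cal F}')+c_2\le\mu^Z(\tilde{\cal F}')$ and $\mu^Z(\tilde{\cal F})\le c_3\mu^P(\tilde{\cal F})+c_4$ of Lemma~2.1.8, with $c_1,c_3>0$, gives a uniform upper bound on $\mu^P(\tilde{\cal F}')$, hence on $\mu^P_{\max}(\tilde{\cal F})$, over all objects of the stack. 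A family of purely $1$-dimensional coherent sheaves with fixed Hilbert polynomial and uniformly bounded $\mu^P_{\max}$ on the fibres of a bounded family of projective schemes is bounded, by the Kleiman--Grothendieck boundedness criterion together with the Le~Potier--Simpson-type estimates bounding $h^0$ in terms of slopes (cf.\ the references [Kl], [LP], [Ma2], [Si] recalled in Remark~0.1); running this over each of the finitely many polynomials in $\Poly^H(Z^{B+\sqrt{-1}J,L}=c)$ shows the stack is of finite type over $S$. After a fixed twist by a sufficiently large multiple of $H$, all objects become globally generated with constant $h^0$, so that, locally on $S$, they appear as points of a single $\Quot$-scheme, projective over $S$ by Proposition~2.1.9; this is the setting in which the valuative criterion will be verified.

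\emph{Completeness.} Let $R$ be a discrete valuation ${\Bbb C}$-algebra with fraction field $K$ and residue field $k$, and suppose given a $K$-point of the stack: a morphism $\Spec K\to{\cal M}$ with associated nodal curve $C_K$ together with a $Z$-semistable $1$-dimensional sheaf $\tilde{\cal F}_K$ on $C_K\times Y$ with $Z^{B+\sqrt{-1}J,L}(\tilde{\cal F}_K)=c$. Since ${\cal M}$ is proper, after a finite extension of $R$ the map $\Spec K\to{\cal M}$ extends to $\Spec R\to{\cal M}$, giving a flat family $C_R/R$ of nodal curves with generic fibre $C_K$; here $C_R\times Y$ is flat and projective over $\Spec R$. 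Extend $\tilde{\cal F}_K$ to a coherent sheaf on $C_R\times Y$ and divide out its $R$-torsion to obtain an $R$-flat coherent sheaf $\tilde{\cal F}_R$ with generic fibre $\tilde{\cal F}_K$; by Lemmas~2.1.5 and~2.1.6 its special fibre $\tilde{\cal F}_k$ is again $1$-dimensional of width $[0]$ and has central charge $c$. If $\tilde{\cal F}_k$ fails to be $Z$-semistable, perform a Langton-type descent: use the maximal $Z$-destabilizing subsheaf of $\tilde{\cal F}_k$ (Definition~2.2.9, Theorem~2.2.10), or dually its $Z$-minimal semistable quotient, to carry out an elementary modification of $\tilde{\cal F}_R$ along the Cartier divisor $C_k\times Y\subset C_R\times Y$, producing a new $R$-flat extension with the same generic fibre. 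As in the treatment of Langton's theorem in [H-L] and in Langton's original argument, the minimal $Z$-slopes of the successive special fibres form a non-decreasing sequence, bounded above by $\mu^Z(\tilde{\cal F}_K)$ and --- by boundedness applied to the (canonical, hence bounded) Harder--Narasimhan quotients --- taking values in a discrete set; hence the sequence stabilizes, and once it does the special fibre is $Z$-semistable with the prescribed invariants. This furnishes the required $R$-point, so the stack is universally closed; together with finite type this is the asserted compactness.

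I expect the genuine work to be the Langton-type argument, and in three places in particular: (i) setting it up with the slope $\mu^Z$ of our central charge in place of a polarization, matching the elementary modifications to the Harder--Narasimhan data of Section~2.2; (ii) proving termination, which rests on the monovariant $\mu^Z_{\min}$ of the special fibres being bounded and discretely valued --- exactly where the finiteness of $\Poly^H(Z^{B+\sqrt{-1}J,L}=c)$ (Lemma~2.1.7) and the slope comparison (Lemma~2.1.8) are indispensable, since the destabilizing sub/quotients that occur have their curve class only constrained to lie strictly between $0$ and $\tilde{\beta}_0$; and (iii) checking at each step that the modification preserves $R$-flatness, pure $1$-dimensionality of width $[0]$ in every fibre, and the central charge $c$, without altering $C_R/R$ --- routine, but to be done with care because the total space $C_R\times Y$ is singular along the nodes of $C_R$. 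The boundedness step itself is comparatively standard given Lemmas~2.1.7--2.1.8 and the classical $h^0$-versus-slope estimates; one further minor point is that, the object being an Artin stack, the valuative criterion must be phrased $2$-categorically, which adds only bookkeeping.
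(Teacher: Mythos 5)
Your boundedness half is essentially the paper's own argument: decompose by Lemma~2.1.7 into finitely many pieces with fixed Hilbert polynomial, bound $\mu^P_{\max}$ of a $Z$-semistable sheaf through the slope comparison of Lemma~2.1.8, feed this into the Le~Potier--Simpson-type estimate and Kleiman's criterion. That part is fine. The genuine gap is in the completeness half, precisely at the point you yourself flag as ``the genuine work'', namely termination of the Langton process. Your proposed termination argument --- the minimal $Z$-slopes of the successive special fibres are non-decreasing, bounded above by $\mu^Z(\tilde{\cal F}_K)$ and discretely valued, hence stabilize, ``and once [they do] the special fibre is $Z$-semistable'' --- does not work. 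Semistability of $\tilde{\cal F}^i_k$ is equivalent to the destabilizing slope actually reaching $\mu^Z(\tilde{\cal F}_K)$; a monotone, bounded, discretely valued sequence merely becomes constant, and it can become constant at a value strictly on the unstable side. That is exactly the scenario of an infinite chain of elementary modifications with the same destabilizing slope, and ruling it out is the real content of Langton's theorem; the monotone-plus-discrete observation alone proves nothing beyond eventual constancy of the slope.

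What the paper does (following Langton and [H-L: Theorem~2.B.1]) is a proof by contradiction with a second stage that your sketch omits: assuming the process never terminates, one establishes the second exact sequence $0\rightarrow\tilde{\cal G}^{i-1}\rightarrow\tilde{\cal F}^i_k\rightarrow\tilde{\cal B}^{i-1}\rightarrow 0$ (Lemma~3.6), shows that after the slopes stabilize the destabilizing sheaves themselves stabilize --- $\tilde{\cal B}^{i+1}\cap\tilde{\cal G}^i=0$, the maps $\tilde{\cal B}^{i+1}\rightarrow\tilde{\cal B}^i$ and $\tilde{\cal G}^i\rightarrow\tilde{\cal G}^{i+1}$ become isomorphisms via finiteness of the occurring curve classes and a reflexive-hull argument, and $\tilde{\cal F}^i_k\simeq\tilde{\cal B}\oplus\tilde{\cal G}$ (Lemma~3.7) --- and then observes that the quotients $\tilde{\cal Q}^i=\tilde{\cal F}_R/\tilde{\cal F}^i_R$ are flat over $R/(t^i)$ with fibre $\tilde{\cal G}$, so that the properness over $\Spec R$ of the fixed-central-charge $\Quot$-scheme (Proposition~2.1.9, itself resting on Lemma~2.1.7) forces the generic fibre, after a finite extension of $K$, to admit a quotient of central charge $Z(\tilde{\cal G})$, i.e.\ a destabilizing quotient, contradicting $Z$-semistability of $\tilde{\cal F}_K$ (Corollary~2.2.12, semistability under field extension). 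Without this stage --- or some substitute for it --- your valuative-criterion step is incomplete. Two minor points: the paper obtains discreteness of the relevant slope values from finiteness of the curve classes $\tilde{\beta}$ of subsheaves of the $\tilde{\cal F}^i_k$, not from boundedness of Harder--Narasimhan quotients as you suggest; and the flat extension of $\tilde{\cal F}_K$ is taken inside $j_{\ast}\tilde{\cal F}_K$, which is equivalent to your quotient-by-$R$-torsion device.
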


\bigskip

\bigskip

\begin{flushleft}
{\bf Boundedness of
         $\FM^{1,[0];\scriptsizeZss}_{C_{\cal M}/{\cal M}}(Y;c)$.}
\end{flushleft}
Recall first the Kleiman's Boundedness Criterion:

\bigskip

\begin{stheorem}
{\bf [Kleiman's boundedness criterion].}{\rm  (Cf.\ [H-L: Theorem~1.7.8], [Kl].)}
 Let ${F_s}$ be a family of coherent sheaves on $X$ with the same Hilbert polynomial $P$.
 Then this family is bounded if and only if there are constants $c_i$, $i=0,\,\ldots\,,\, d=\degree(P)$
  such that for every $F_s$ there exists an $F_s$-regular sequence of hyperplane sections
   $H_1,\,\cdots\,,\, H_d$ such that $h^0(F_s|_{\cap_{j\le i} H_j})\le c_i$,
   for all $i=0,\,\ldots\,,\, d$.
\end{stheorem}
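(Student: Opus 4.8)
The plan is to reduce the criterion to the single statement that a family with fixed Hilbert polynomial is bounded if and only if its members have uniformly bounded Castelnuovo--Mumford regularity, and then to extract that uniform regularity from the hypotheses by induction on $d=\deg P$. First I would embed $X\hookrightarrow{\Bbb P}^N$ by a power of ${\cal O}_X(1)$ and push the sheaves forward, so that without loss of generality $X={\Bbb P}^N$; this preserves both boundedness and the hyperplane-restriction data, since an $F_s$-regular hyperplane section of $X$ is cut out by a hyperplane of ${\Bbb P}^N$ meeting no associated point of $F_s$. I would then recall the two standard facts that anchor the argument: (i) if all $F_s$ are $m_0$-regular for a single $m_0$, then each $F_s(m_0)$ is globally generated with $h^0(F_s(m_0))=P(m_0)$, so $F_s$ is a quotient of the fixed sheaf ${\cal O}(-m_0)^{\oplus P(m_0)}$ with Hilbert polynomial $P$, whence the family lies in a single $\Quot$-scheme and is bounded; and (ii) conversely, a bounded family can be stratified into finitely many flat families, each of uniformly bounded regularity by semicontinuity, so a bounded family is uniformly $m_0$-regular.

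For the easy direction (bounded $\Rightarrow$ the $h^0$-bounds) I would use (ii) to get a uniform $m_0$, note that regularity does not increase under restriction to a generic, hence $F_s$-regular, hyperplane, so every iterated restriction $F_s|_{\cap_{j\le i}H_j}$ is $m_0$-regular, and conclude $h^0(F_s|_{\cap_{j\le i}H_j})=\chi(F_s|_{\cap_{j\le i}H_j})$, a value determined by $P$ alone. Taking $c_i$ to be this value gives the constants.

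The substance is the reverse direction, and here I would induct on $d$. For $d=0$ the sheaves have length $h^0(F_s)=P$, are automatically $0$-regular, and are bounded by fact (i). For the inductive step, writing $G_s:=F_s|_{H_1}$, the sheaf $G_s$ has dimension $d-1$, the fixed Hilbert polynomial $P':=P-P(\,\cdot\,-1)$, the inherited regular sequence $H_2|_{H_1},\dots,H_d|_{H_1}$, and the inherited bounds $h^0(G_s|_{\cap_{2\le j\le i}H_j})\le c_i$; by induction the family $\{G_s\}$ is bounded, hence uniformly $m_1$-regular. I would then bootstrap a regularity bound for $F_s$ from the short exact sequences $0\to F_s(m-1)\to F_s(m)\to G_s(m)\to 0$. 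The long exact sequences, together with the vanishing of $H^{\ge 1}(G_s(m))$ for $m\ge m_1$, first force $H^i(F_s(m))\cong H^i(F_s(m+1))$ for $i\ge 2$ and $m\ge m_1-1$, hence $H^{\ge 2}(F_s(m))=0$ there by Serre vanishing; and they force the maps $H^1(F_s(m-1))\twoheadrightarrow H^1(F_s(m))$ to be surjective for $m\ge m_1$, so that $h^1(F_s(m))$ is non-increasing. It then remains to pin down, uniformly, the place where $h^1$ reaches $0$.

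The hard part is exactly this last control of $H^1$, and I expect it to be the main obstacle. The strategy would be twofold. First, using that $G_s(m)$ is globally generated and $H^0(G_s(m))\otimes H^0({\cal O}(1))\to H^0(G_s(m+1))$ is surjective for $m\ge m_1$, I would show that surjectivity of $H^0(F_s(m))\to H^0(G_s(m))$ propagates upward in $m$; since $h^1(F_s(m))$ strictly decreases precisely until this map becomes surjective and is constant thereafter, and since it tends to $0$, it must in fact vanish from the first such $m$ on. Second, to make that first place uniform I would bound the starting value $h^1(F_s(m_1-1))=h^0(F_s(m_1-1))-P(m_1-1)$ uniformly, combining the hypothesis $h^0(F_s)\le c_0$ with the estimates $h^0(F_s(m))\le h^0(F_s(m-1))+h^0(G_s(m))$ and the uniform bounds on $h^0(G_s(k))$ coming from the now-established boundedness of $\{G_s\}$. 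This yields a uniform $m_2=m_2(P,c_0,\dots,c_d)$ with $H^{\ge 1}(F_s(m))=0$ for $m\ge m_2-1$ and $G_s$ $m_2$-regular, hence $F_s$ uniformly $m_2$-regular, and the family is bounded by fact (i). The delicate bookkeeping---coupling the purely cohomological monotonicity produced by the restriction with the global numerical constraint supplied by the fixed $P$ and the bound $c_0$---is the crux of the argument.
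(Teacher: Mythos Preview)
The paper does not prove this theorem; it is quoted from [H-L: Theorem~1.7.8] and [Kl] and used as a black box in the boundedness step for $\FM^{1,[0];\scriptsizeZss}_{C_{\cal M}/{\cal M}}(Y;c)$. Your argument is exactly the standard proof found in Huybrechts--Lehn (reduction to uniform Castelnuovo--Mumford regularity, induction on $d$ via hyperplane restriction, and Mumford's bootstrapping of $H^1$), and the hard direction is correctly laid out, including the crucial point that $h^1(F_s(m))$ strictly decreases until it vanishes and that the starting value $h^1(F_s(m_1-1))$ is uniformly bounded via $c_0$ and the inductive control on $h^0(G_s(k))$.

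One minor slip in the easy direction: $m_0$-regularity of the iterated restrictions does not give $h^0=\chi$ untwisted, only $h^0(\,\cdot\,(m_0))=\chi(\,\cdot\,(m_0))$. The fix is immediate: either bound $h^0(G)\le h^0(G(m_0))$ by multiplication by a $G$-regular linear form (taking $m_0\ge 0$), or, more directly, use upper semicontinuity of $h^0$ over the finite-type parameter scheme that boundedness hands you.
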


\bigskip

Let $H$ be a relative ample class of $(C_{\cal M}\times Y)/{\cal M}$.
Then, it follows from Lemma~2.1.7
   that
   $$
      \FM^{1,[0];\scriptsizeZss}_{C_{\cal M}/{\cal M}}(Y;c)\;
	  =\; \coprod_{P^H_i\in \scriptsizePoly^H(Z^{B+\sqrt{-1}J,L}=c)}\,
	         \FM^{1,[0];\scriptsizeZss}_{C_{\cal M}/{\cal M}}(Y;c)^{P^H_i}
  $$
    is a finite disjoint union of substacks
	$\FM^{1,[0];\scriptsizeZss}_{C_{\cal M}/{\cal M}}(Y;c)^{P^H_{i}}$,
     with the latter the union of connected components of
	 $\FM^{1,[0];\scriptsizeZss}_{C_{\cal M}/{\cal M}}(Y;c)$
	  whose elements have the same Hilbert polynomial $P^H_{i}$.
Recall the following estimate:

\bigskip

\begin{sproposition}
 {\bf [bound on $h^0(\tilde{\cal F })$ via $P$-slope].}
 {\rm ([H-L: Theorem~3.3.1], [LP], [Ma2], [Si].) }
 Let $\tilde{\cal F}$ be a purely $1$-dimensional coherent sheaf
  on $C\times Y$ for $[C]\in {\cal M}$ .
 Then,
  $$
    h^0(\tilde{\cal F})\;
	 \le\;
	          \left(H\cdot \tilde{\beta}(\tilde{\cal F})\right)\,
	          \left[
    		    \mu^P_{max}(\tilde{\cal F})\,
			    +\,  \frac{1}{2}
				         \left( H\cdot\tilde{\beta}(\tilde{\cal F})\right)
						 \left( H\cdot\tilde{\beta}(\tilde{\cal F})\,+\,1\right)\,
				-1		
	          \right]_+\,,
  $$
  where $[\,\bullet\,]:= \max\{0, \,\bullet\,\}$.
\end{sproposition}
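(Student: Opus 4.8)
The plan is to reproduce the standard hyperplane‑section estimate behind [H-L: Theorem~3.3.1] (and the bounds of [LP], [Ma2], [Si]) in the present pure‑dimension‑one setting. Fix $[C]\in{\cal M}$, abbreviate $r:=H\cdot\tilde\beta(\tilde{\cal F})$ and $\mu_{max}:=\mu^P_{max}(\tilde{\cal F})$, and take $H$ very ample, so that it embeds $C\times Y$ into a projective space and a general hyperplane $H'\in|H|$ avoids the finitely many generic points of $\Supp(\tilde{\cal F})$. First I would use, for such an $H'$, the short exact sequence $0\to\tilde{\cal F}(-1)\to\tilde{\cal F}\to\tilde{\cal F}|_{H'}\to 0$, whose third term is $0$-dimensional of length $\chi(\tilde{\cal F})-\chi(\tilde{\cal F}(-1))=r$. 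Since a $0$-dimensional sheaf is unchanged by twisting, iterating this sequence with the one fixed $H'$ gives $h^0(\tilde{\cal F})\le h^0(\tilde{\cal F}(-k))+kr$ for every $k\ge 1$, which reduces everything to bounding from above the least $k$ with $h^0(\tilde{\cal F}(-k))=0$.

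To bound that $k$, I would observe that a nonzero section of $\tilde{\cal F}(-k)$ produces a subsheaf ${\cal O}_Z\hookrightarrow\tilde{\cal F}(-k)$; being a subsheaf of a pure sheaf it is itself purely $1$-dimensional, so $\rho:=H\cdot\tilde\beta({\cal O}_Z)\in\{1,\dots,r\}$. Twisting back, ${\cal O}_Z(k)\hookrightarrow\tilde{\cal F}$, so by the definition of $\mu^P_{max}$ one gets $\mu_{max}\ge\mu^P({\cal O}_Z(k))=k+\chi({\cal O}_Z)/\rho$, i.e.\ $k\le\mu_{max}-\chi({\cal O}_Z)/\rho$. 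Here I would invoke a Castelnuovo‑type lower bound on $\chi({\cal O}_Z)$ for a purely $1$-dimensional (possibly reducible and non‑reduced) subscheme $Z$ of a projective space of $H$-degree $\rho$, of the shape $\chi({\cal O}_Z)\ge 1-\binom{\rho-1}{2}$, i.e.\ $p_a(Z)\le\binom{\rho-1}{2}$; together with $\rho\le r$ this gives $k\le\mu_{max}+\tfrac{r-3}{2}$. Applied with $k=0$ the same estimate shows $h^0(\tilde{\cal F})=0$ once $\mu_{max}<\tfrac{3-r}{2}$, which is exactly what the truncation $[\,\cdot\,]_+$ in the statement is there to record.

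Finally, with $k_\ast$ the least admissible integer I would combine the two steps into $h^0(\tilde{\cal F})\le k_\ast r$ and check the elementary inequality $k_\ast r\le r\bigl[\mu_{max}+\tfrac12 r(r+1)-1\bigr]_+$, which holds for all $r\ge 1$ (it is an equality when $r=1$, where $\tilde{\cal F}$ is a line bundle on a line). The only genuinely non‑formal ingredient --- and the step I expect to be the real content --- is the Castelnuovo/arithmetic‑genus bound on $\chi({\cal O}_Z)$: any upper bound of order $\tfrac12\rho^2$ on $p_a(Z)$ closes the argument, and extracting the clean constant $\tfrac12 r(r+1)-1$ is precisely what the careful accounts [LP], [Ma2], [Si] supply. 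Everything else is bookkeeping with short exact sequences, Riemann-Roch for sheaves of dimension $\le 1$, and the definition of $\mu^P_{max}$.
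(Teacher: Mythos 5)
The paper offers no proof of this proposition at all --- it is recalled verbatim from [H-L: Theorem~3.3.1], [LP], [Ma2], [Si] --- so the relevant comparison is with the argument in those sources rather than with anything in the text. Your skeleton is sound and your bookkeeping closes, in fact with room to spare: purity guarantees that the associated points of $\tilde{\cal F}$ are the finitely many generic points of its support, so a general member of $|H|$ is $\tilde{\cal F}$-regular and gives $h^0(\tilde{\cal F})\le h^0(\tilde{\cal F}(-k))+k\,r$ with $r=H\cdot\tilde{\beta}(\tilde{\cal F})$; a nonzero section of $\tilde{\cal F}(-k)$ yields ${\cal O}_Z(k)\hookrightarrow\tilde{\cal F}$ with ${\cal O}_Z$ pure of dimension one, $\rho=H\cdot\tilde{\beta}({\cal O}_Z)\le r$ and $k\le\mu^P_{max}(\tilde{\cal F})-\chi({\cal O}_Z)/\rho$; and the final numerical comparison you assert does hold (your route even produces the stronger constant $(r-1)/2$ in place of $r(r+1)/2-1$, with the $[\,\cdot\,]_+$ cases checking out).

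The genuine gap is exactly the step you flag as the ``real content'' and then outsource: the Castelnuovo-type inequality $\chi({\cal O}_Z)\ge 1-\binom{\rho-1}{2}$ must be available for an \emph{arbitrary} purely one-dimensional subscheme $Z$ of degree $\rho$ in the ambient projective space --- reducible, disconnected and, above all, non-reduced (i.e.\ an arbitrary locally Cohen--Macaulay curve). This statement is true, but it is not what [LP], [Ma2], [Si] or [H-L] ``supply'': those references prove the $h^0$-bound itself by a different mechanism (reduction to the semistable case via the Harder--Narasimhan filtration, then restriction to general hyperplane sections and/or a generic finite projection of the support onto a low-dimensional projective space; cf.\ [Si], [LP], [H-L: Sec.~3.3]), and the classical Castelnuovo argument via points in general position on a hyperplane does not apply to non-reduced $Z$. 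As written, your proof therefore rests on an unproved and mis-attributed lemma. It can be repaired inside your own framework: for such $Z$ a general hyperplane $H'$ misses the finitely many associated points, the sequence $0\to{\cal O}_Z(m-1)\to{\cal O}_Z(m)\to{\cal O}_{Z\cap H'}(m)\to 0$ is exact, and since the Hilbert function of any zero-dimensional scheme of length $\rho$ is at least $\min(m+1,\rho)$, summing over $m\ge 1$ gives $h^1({\cal O}_Z)\le\sum_{m\ge 1}\max\{0,\rho-m-1\}=\binom{\rho-1}{2}$, whence $\chi({\cal O}_Z)\ge 1-\binom{\rho-1}{2}$ because $h^0({\cal O}_Z)\ge 1$. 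With that lemma supplied, your argument is complete, and it is a genuinely different --- and somewhat more geometric --- route than the Harder--Narasimhan/projection argument behind the references the paper cites.
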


\bigskip

\noindent
Combined with Lemma~2.1.8,
one has then the estimate:

\bigskip

\begin{sproposition}
{\bf [bound on $h^0(\tilde{\cal F })$ via $Z$-slope].}
 There exist constants $a_1>0$ and $a_0\in {\Bbb R}$ that depend only on
  $(Y,B+\sqrt{-1}J)$, $(C_{\cal M}/{\cal M}, L)$, and $H$
  such that
  for all
   $[\tilde{\cal F}] \in
      \FM^{1,[0];\scriptsizeZss}_{C_{\cal M}/{\cal M}}(Y;c)$,
  $$
    h^0(\tilde{\cal F})\;
	 \le\;
	          \left(H\cdot \tilde{\beta}(\tilde{\cal F})\right)\,
	          \left[
    		    \left(a_1\,\frac{\Real c}{-\Imaginary c}\,+\,a_0\right)\,
			    +\,  \frac{1}{2}
				         \left( H\cdot\tilde{\beta}(\tilde{\cal F})\right)
						 \left( H\cdot\tilde{\beta}(\tilde{\cal F})\,+\,1\right)\,
				-1		
	          \right]_+\,,
  $$
  where  $[\,\bullet\,]:= \max\{0, \,\bullet\,\}$.
\end{sproposition}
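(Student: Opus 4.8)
The plan is to deduce the estimate from Proposition~3.3 by bounding the maximal $P$-slope $\mu^P_{max}(\tilde{\cal F})$ in terms of the $Z$-slope $\mu^Z(\tilde{\cal F})$. First I would record that a $Z$-semistable sheaf is by definition purely $1$-dimensional (Definition~2.2.1), so $\tilde{\beta}(\tilde{\cal F})\neq 0$, $(J+L)\cdot\tilde{\beta}(\tilde{\cal F})>0$, and hence $\Imaginary c=\Imaginary Z^{B+\sqrt{-1}J,L}(\tilde{\cal F})<0$; combining $Z^{B+\sqrt{-1}J,L}(\tilde{\cal F})=c$ with the explicit form in Lemma~2.1.2 and the definition of $\mu^Z$ (Definition~2.1.4) gives
$$
  \mu^Z(\tilde{\cal F})\;=\;-\,\frac{\Real c}{\Imaginary c}\;=\;\frac{\Real c}{-\Imaginary c}\,.
$$
Thus it is enough to find constants $a_1>0$ and $a_0\in{\Bbb R}$, depending only on $(Y,B+\sqrt{-1}J)$, $(C_{\cal M}/{\cal M},L)$ and $H$, such that $\mu^P_{max}(\tilde{\cal F})\le a_1\,\mu^Z(\tilde{\cal F})+a_0$ for every $Z$-semistable $\tilde{\cal F}$ of central charge $c$, and then substitute into Proposition~3.3, using that $t\mapsto[t]_+$ is non-decreasing and that $H\cdot\tilde{\beta}(\tilde{\cal F})\ge 0$.

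The key step is this slope inequality. I would let $\tilde{\cal G}\subseteq\tilde{\cal F}$ be the first term of the Harder--Narasimhan filtration of $\tilde{\cal F}$ with respect to $\mu^P$, i.e.\ its maximal $\mu^P$-destabilizing subsheaf (which exists by the usual argument, cf.\ [H-L]); then $\tilde{\cal G}$ is purely $1$-dimensional with $\mu^P(\tilde{\cal G})=\mu^P_{max}(\tilde{\cal F})$. Since $\tilde{\cal F}$ is $Z$-semistable and $\tilde{\cal G}$ is a nonzero proper (or equal) subsheaf, $\mu^Z(\tilde{\cal G})\le\mu^Z(\tilde{\cal F})$. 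Applying the left-hand inequality of Lemma~2.1.8 to $\tilde{\cal G}$ — which holds with constants $c_1>0$, $c_2$ depending only on $B+\sqrt{-1}J$, $L$, $H$ and, crucially, uniformly over all nodal curves $C$ in the family — gives $c_1\,\mu^P(\tilde{\cal G})+c_2\le\mu^Z(\tilde{\cal G})$. Chaining the three relations,
$$
  c_1\,\mu^P_{max}(\tilde{\cal F})+c_2\;\le\;\mu^Z(\tilde{\cal F})\;=\;\frac{\Real c}{-\Imaginary c}\,,
$$
so $\mu^P_{max}(\tilde{\cal F})\le a_1\,\dfrac{\Real c}{-\Imaginary c}+a_0$ with $a_1:=1/c_1>0$ and $a_0:=-c_2/c_1$. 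Plugging this into Proposition~3.3 and using monotonicity of $[\,\cdot\,]_+$ then produces exactly the stated bound, with $a_0,a_1$ depending only on the claimed data.

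I do not expect a genuine obstacle here: the argument is a short formal combination of Proposition~3.3, Lemma~2.1.8 and Lemma~2.1.2. The only subtlety to flag is that the constants $c_1,c_2$ must be uniform across the whole compact family $C_{\cal M}/{\cal M}$ and all $1$-dimensional sheaves on the fibres, rather than depending on the individual curve $C$ or sheaf $\tilde{\cal F}$ — but that uniformity is precisely the content of Lemma~2.1.8 (which in turn rests on the compactness of ${\cal M}$ and boundedness of the relevant cone of curve classes). Granting that, the present proposition is an immediate corollary.
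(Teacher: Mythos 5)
Your proposal is correct and follows the same route the paper intends: it combines Proposition~3.3 with the left-hand inequality of Lemma~2.1.8, applied to the maximal $\mu^P$-destabilizing subsheaf and chained with the $Z$-semistability of $\tilde{\cal F}$ and the identity $\mu^Z(\tilde{\cal F})=\Real c/(-\Imaginary c)$. The paper states this only as ``combined with Lemma~2.1.8,'' so your write-up simply supplies the details (including the uniformity of the constants over the family) that the paper leaves implicit.
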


On the other hand,
 $h^0(\tilde{\cal F}|_H)=H\cdot\tilde{\beta}(\tilde{\cal F})$
  is the leading coefficient of the Hilbert polynomial $P^H(\tilde{\cal F})$
   of $\tilde{\cal F}$   and, hence,
  is constant for all
    $[\tilde{\cal F}]  \in
	    \FM^{1,[0];\scriptsizeZss}_{C_{\cal M}/{\cal M}}(Y;c) ^{P^H_i}$.

It follows now from the Kleiman's Boundedness Criterion
that each $\FM^{1,[0];\scriptsizeZss}_{C_{\cal M}/{\cal M}}(Y;c)^{P^H_i}$
            is bounded and, hence,
that $\FM^{1,[0];\scriptsizeZss}_{C_{\cal M}/{\cal M}}(Y;c)$ is bounded.

\bigskip

\bigskip

\begin{flushleft}
{\bf Completeness of $\FM^{1,[0];\scriptsizeZss}_{C_{\cal M}/{\cal M}}(Y;c)$.}
\end{flushleft}
{From} the basic properties
  of twisted central charges and
  of the associated stability conditions and Harder-Narasimhan filtrations
 given in Sec.~2,
the completeness of the stack
$\FM^{1,[0];\scriptsizeZss}_{C_{\cal M}/{\cal M}}(Y;c)$
follows from the Langton's argument [La]
 through elementary modifications,
 with (e.g., for the modified and generalized presentation in [H-L: Sec.~2.B])
  Hilbert polynomials and reduced Hilbert polynomials
  replaced by twisted central charges and associated slope respectively
 and using the properness of the related relative $\Quot$-schemes.
We convert the proof of
   Stacy Langton [La] and  Daniel Huybrechts and Manfred Lehn [H-L: Theorem~2.B.1]
  to our case below for the thoroughness of the discussion.
Readers are referred to [La] and [H-L] for the original ideas and proofs.
(Note that the inductive proof of Huybrechts and Lehn via quotient category
    of coherent sheaves becomes superficial
   when one concerns only with $1$-dimensional coherent sheaves.)

\bigskip

Let $R$ be a discrete valuation ring
  with maximal ideal $m=(t)$, residue field $R/(t)\simeq k\simeq {\Bbb C}$
         and quotient field $K$.
When needed, a scheme over $\Spec R$
   (resp.\ the generic point  $\Spec K$ and the closed point $\Spec k\, \in \, \Spec R $)
  will be denoted by $X_R$ (resp.\ $X_K$ and $X_k$ );  and
similarly for a coherent sheaf.

\bigskip

\begin{sproposition}
 {\bf [valuative criterion].}
  Let
   $(C_K, L_K)$ be a nodal curve with a polarization class over $\Spec K$
     from a morphism $\Spec K\rightarrow {\cal M}$   and
  $\tilde{\cal F}_K$ be $Z$-semistable coherent sheaf on $(C_K\times Y)/K$.
 Then, up to a field extension $K\subset K^{\prime}$ of finite degree,
   $\tilde{\cal F}_K$ extend to a coherent sheaf
   $\tilde{\cal F }_{R^{\prime}}$  on $(C_{R^{\prime}\times Y})/R^{\prime}$
   that is flat over $R^{\prime}$ and has $\tilde{\cal F}_k$ $Z$-semistable.
 Here, $R^{\prime}\leftarrow R$
   is the discrete valuation ring with residue firld $k$ and quotient field $K^{\prime}$.
\end{sproposition}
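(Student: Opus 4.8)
The plan is to adapt Stacy Langton's method of elementary modifications, in the streamlined form of [La] and [H-L: Theorem~2.B.1], replacing the reduced Hilbert polynomial throughout by the twisted slope $\mu^Z$. \emph{Step 1 (reduce the varying curve to a fixed family over a DVR).} Since ${\cal M}$ is a compact, hence proper, stack of nodal curves, the valuative criterion of properness applied to $\Spec K\to{\cal M}$ produces, after a finite field extension $K\subset K^\prime$ with $R^\prime$ the integral closure of $R$ in $K^\prime$ (again a discrete valuation ring with residue field $k$), a morphism $\Spec R^\prime\to{\cal M}$ restricting to the given one over $\Spec K^\prime$. Pulling back the universal curve and the relative polarization gives a projective family $(C_{R^\prime}\times Y)/R^\prime$ with a relative ample class $H$. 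By Corollary~2.2.12 the base change $\tilde{\cal F}_K\otimes_KK^\prime$ is still $Z$-semistable, so — relabelling $K^\prime\rightsquigarrow K$, $R^\prime\rightsquigarrow R$, the finite extension being absorbed into the final $R^\prime$ of the statement — it suffices to treat the purely sheaf-theoretic problem over a fixed projective family $(C_R\times Y)/R$.

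\emph{Step 2 (a first flat extension with pure special fibre).} Since $C_R\times Y\to\Spec R$ is projective, $\tilde{\cal F}_K$ extends to some coherent sheaf on $C_R\times Y$; dividing out its $R$-torsion (equivalently its $t$-torsion) yields $\tilde{\cal F}_R$ flat over $R$ of relative dimension $1$ with $\tilde{\cal F}_R\otimes_RK=\tilde{\cal F}_K$. By finitely many preliminary elementary modifications against the $0$-dimensional torsion of the special fibre — a standard colength argument shows this terminates — we may assume, as in [H-L: Sec.~2.B], that $\tilde{\cal F}_k$ is purely $1$-dimensional, so that the Harder--Narasimhan theory of Sec.~2.2 applies to it.

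\emph{Step 3 (the modification algorithm).} If $\tilde{\cal F}_k$ is $Z$-semistable we are done. Otherwise let $B^{(0)}\subset\tilde{\cal F}_k$ be the maximal $Z$-destabilizing subsheaf, which exists and is unique by Theorem~2.2.10, put $G^{(0)}=\tilde{\cal F}_k/B^{(0)}$, and set
$$\tilde{\cal F}_R^{(1)}\;:=\;\ker\bigl(\tilde{\cal F}_R\twoheadrightarrow\tilde{\cal F}_k\twoheadrightarrow G^{(0)}\bigr)\,.$$
Then $\tilde{\cal F}_R^{(1)}$ is again flat over $R$ with $\tilde{\cal F}_R^{(1)}\otimes_RK=\tilde{\cal F}_K$, and there are the ``see-saw'' sequences $0\to B^{(0)}\to\tilde{\cal F}_k\to G^{(0)}\to 0$ and $0\to G^{(0)}\to\tilde{\cal F}_k^{(1)}\to B^{(0)}\to 0$. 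Iterating produces a descending chain of lattices $\tilde{\cal F}_R=\tilde{\cal F}_R^{(0)}\supset\tilde{\cal F}_R^{(1)}\supset\cdots$ in $\tilde{\cal F}_K$, with the analogous see-saw sequences at each stage and $B^{(n)}$ the maximal $Z$-destabilizing subsheaf of $\tilde{\cal F}_k^{(n)}$.

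\emph{Step 4 (termination, and the main obstacle).} It remains to show $\tilde{\cal F}_k^{(n)}$ is $Z$-semistable for some $n$. As in Langton's argument, additivity of $Z$ along the see-saw sequences (Lemma~2.1.6) together with the homomorphism properties of $Z$-semistable sheaves (Propositions~2.2.3 and~2.2.4) shows $\mu^Z_{max}(\tilde{\cal F}_k^{(n)})$ is non-increasing in $n$; since all $\tilde{\cal F}_k^{(n)}$ have the same Hilbert polynomial and the destabilizing subsheaves that occur range over a bounded family (projectivity of the relevant relative $\Quot$-schemes, Proposition~2.1.9), while for the fixed central charge the Hilbert polynomials — equivalently, via the comparison Lemma~2.1.8, the $Z$-slopes — of the $Z$-semistable subquotients form a finite set (Lemma~2.1.7), the quantity $\mu^Z_{max}(\tilde{\cal F}_k^{(n)})$, and then the full Harder--Narasimhan type of $\tilde{\cal F}_k^{(n)}$, eventually stabilizes. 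Langton's argument then forces a contradiction: a non-terminating, stabilized process yields (as a limit of the compatible maximal destabilizers $B^{(n)}$) a $Z$-destabilizing subsheaf of the generic fibre $\tilde{\cal F}_K$, against its $Z$-semistability (Definition~2.2.1). Hence the process stops, and the terminal $\tilde{\cal F}_R$ — viewed over the $R^\prime$ of the statement — is the desired flat extension with $Z$-semistable special fibre. The hard part is exactly this Step~4: one must verify that $\mu^Z$, rather than the reduced Hilbert polynomial, retains every finiteness and monotonicity feature Langton's termination proof uses — discreteness of the attained destabilizing slopes (Lemmas~2.1.7, 2.1.8), boundedness of the destabilizing subsheaves (Proposition~2.1.9), and compatibility of the stabilized destabilizers with passage to the generic fibre — whereas Steps~1--3 are routine, the first a formal consequence of properness of ${\cal M}$ and the rest standard homological bookkeeping over a DVR.
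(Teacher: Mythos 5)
Your Steps 1--3 coincide with the paper's argument (extend the curve over $R$ by properness of ${\cal M}$, extend the sheaf to an $R$-flat $\tilde{\cal F}_R$, and run Langton-type elementary modifications at the maximal $Z$-destabilizing quotients), but your proof stops exactly where the real work begins, and you say so yourself: Step 4 asserts that the destabilizing data "eventually stabilize" and that "Langton's argument then forces a contradiction", while flagging the needed finiteness, monotonicity and compatibility-with-the-generic-fibre statements as things "one must verify" --- and none of them is verified. That unverified content is precisely the paper's Lemma~3.6, Lemma~3.7 and the closing Quot-scheme argument. Concretely, the paper first establishes the see-saw sequence $0\to\tilde{\cal G}^{i-1}\to\tilde{\cal F}^i_k\to\tilde{\cal B}^{i-1}\to 0$ by a $\Tor$ computation, and then proves stationarity as follows: all subsheaves of the $\tilde{\cal F}^i_k$ have $\tilde{\beta}$-classes in one finite subset of $N_1(C_k\times Y)_{\Bbb Z}$ (because $t^{i-1}\tilde{\cal F}^1_R\subset\tilde{\cal F}^i_R\subset\tilde{\cal F}^1_R$), so $\mu^Z$ takes values in a discrete rank-one affine lattice, the non-increasing sequence $\mu^Z(\tilde{\cal B}^i)>\mu^Z(\tilde{\cal F}_K)$ stabilizes, this forces $\tilde{\cal B}^{i+1}\cap\tilde{\cal G}^i=0$, then the effective cycles $\tilde{\beta}(\tilde{\cal B}^i)$ and $\tilde{\beta}(\tilde{\cal G}^i)$ stabilize, and finally a reflexive-hull (double dual with respect to $\omega_{C_k\times Y}$) argument gives $\tilde{\cal G}^i\stackrel{\sim}{\rightarrow}\tilde{\cal G}^{i+1}$, the splitting $\tilde{\cal F}^i_k\simeq\tilde{\cal B}\oplus\tilde{\cal G}$ and $\tilde{\cal B}^{i+1}\simeq\tilde{\cal B}^i$. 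None of this is routine slope bookkeeping, and citing Lemma~2.1.7 and Proposition~2.1.9 at this point does not substitute for it.

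Moreover, the mechanism you sketch for the final contradiction is not the right one: the maximal destabilizers $B^{(n)}$ are subsheaves of the special fibres of the varying lattices $\tilde{\cal F}^{(n)}_R$, and no naive "limit" of them produces a $Z$-destabilizing subsheaf of the generic fibre $\tilde{\cal F}_K$. What the paper actually does is pass to the quotients $\tilde{\cal Q}^i=\tilde{\cal F}_R/\tilde{\cal F}^i_R$, show $\tilde{\cal Q}^i_k\simeq\tilde{\cal G}$ and, by the Local Flatness Criterion, that $\tilde{\cal Q}^i$ is an $R/(t^i)$-flat quotient of $\tilde{\cal F}_R/(t^i)\tilde{\cal F}_R$ of constant charge $Z^{B+\sqrt{-1}J,L}(\tilde{\cal G})$; hence the image of the proper morphism $\pi_R\colon\Quot^{Z^{B+\sqrt{-1}J,L}}_{(C_R\times Y)/R}(\tilde{\cal F},Z^{B+\sqrt{-1}J,L}(\tilde{\cal G}))\rightarrow\Spec R$ contains $\Spec(R/(t^i))$ for every $i$, so $\pi_R$ is surjective --- this is where the projectivity of the relative Quot-scheme (Proposition~2.1.9) genuinely enters --- and a point over the generic fibre, after a finite extension $K\subset K'$, yields a destabilizing quotient of $\tilde{\cal F}_{K'}$, contradicting the $Z$-semistability of $\tilde{\cal F}_K$ via its invariance under field extension (Corollary~2.2.12). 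So your outline follows the same route as the paper, but the decisive steps --- the stationarity lemma and the formal-to-generic transfer through the Quot scheme --- are missing, and as written your Step~4 would not compile into a proof.
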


\bigskip

We proceed to prove the proposition.
Since ${\cal M}$ is compact, up to a field extension (still denoted by $K$
 and the associated discrete valuation ring by $R$ for simplicity of notations),
 $(C_K,L_K)$ extends to $(C_R,L_R)$
  from a morphism $\Spec R\rightarrow {\cal M}$
  that extends the given (or induced under a field extension)
  $\Spec K\rightarrow {\cal M}$.
(Note that
 there is an embedding $C_R\hookrightarrow {\Bbb P}^N_R$  over $R$,
    for some $N>0$.
 Thus, one can treat our problem exchangeably as a problem of coherent sheaves
  on the fixed smooth ${\Bbb P}^N_R\times Y$
  with their support contained in the closed subscheme $C_R\times Y$
  if one prefers.)
One can extend $\tilde{\cal F}_K$ to a coherent sheaf
   $\tilde{\cal F}_R =: \tilde{\cal F}_R^1$
   of coherent sheaves on $(C_R\times Y)/R$ that is flat over $R$.
(Cf. \ [Hart: II, Exercise 5.15]
         for an extension inside $j_{\ast}\tilde{\cal  F}_K$,
		  which must be flat over $R$.
		 Here, $j:C_K\times Y \rightarrow C_R\times Y$
   		  is the built-in open immersion.)	
			
If $\tilde{\cal F}_k^1$	is $Z$-semistable, then we are done. 		
Otherwise,
  let $\tilde{\cal B}^1$ be the maximal destabilizing subsheaf of $\tilde{\cal F}_k^1$,
   $\tilde{\cal G}^1:= \tilde{\cal F}_k^1 / \tilde{\cal B}^1$,   and
   $$
    \tilde{\cal F}_R^2\;  := \;
	 \Ker(\tilde{\cal F}_R^1\rightarrow \tilde{\cal G}^1)
   $$
   be the {\it elementary modification} of $\tilde{\cal F}_R^1$ at $\tilde{\cal G}^1$.
Here,  $\tilde{\cal F}_R^1\rightarrow \tilde{\cal G}^1$ is the composition
  $\tilde{\cal F}_R^1\rightarrow \tilde{\cal F}_k^1\rightarrow \tilde{\cal G}^1$
  of quotient-sheaf homomorphisms.
If $\tilde{\cal  F}_k^2$ is semistable, then we are done.
Otherwise, one can iterate the above elementary modification
  now on $\tilde{\cal F}_R^2$
   at $\tilde{\cal G}^2:=  \tilde{\cal F}^2_k / \tilde{\cal B}^2$,
    where $\tilde{B}^2$ is the maximal destabilizing subsheaf of $\tilde{\cal F}_k^2$.
If $\tilde{\cal B}^i=\tilde{\cal  F}_k^i$ for some $i\ge 1$, then we are done.

{\it Assume that this is not true},
 we obtain then a sequence of proper subsheaves that are flat over $R$:
  $$
    \cdots\;  \subsetneqq \; \tilde{\cal F}_R^{\, i+1} \;
	  \subsetneqq \; \tilde{\cal F}_R^{\,i} \;
	  \subsetneqq \; \cdots  \;
	   \subsetneqq \; \tilde{\cal F}_R^1 \,.
  $$
  Note that
   $t\tilde{\cal F}_R^{\,i-1}
      \subset \tilde{\cal F}_R^{\,i}\subset \tilde{\cal F}_R^{\, i-1}$ and, hence,
    $t^{i-1}\tilde{\cal F}_R^1\subset \tilde{\cal F}_R^{\, i}\subset \tilde{\cal F}_R^1$.
By construction, one has the built-in exact sequence
    $$
      0\;  \longrightarrow\;
	   \tilde{\cal B}^i\;  \longrightarrow\;
	   \tilde{\cal F}^i_k\; \longrightarrow\; \tilde{\cal  G}^i\;
	   \longrightarrow \; 0\,,
   $$
  for all $i$.

\bigskip

\begin{slemma}
 {\bf [short exact sequence].}
  There is also an exact sequence
   $$
      0\;  \longrightarrow\;
	   \tilde{\cal G}^{i-1}\;  \longrightarrow\;
	   \tilde{\cal F}^i_k\; \longrightarrow\; \tilde{\cal B}^{i-1}\;
	   \longrightarrow \; 0\,,
   $$
   for all $i$.
\end{slemma}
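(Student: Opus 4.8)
The plan is to obtain the asserted sequence by restricting the defining exact sequence of the $i$-th elementary modification to the closed fibre $C_k\times Y$ and keeping track of the single $\Tor$-term that appears. This is the exact analogue of the corresponding step in Langton's argument as presented in [La] and [H-L: Sec.~2.B], adapted to our relative situation over ${\cal M}$.

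By construction $\tilde{\cal F}_R^i$ is the elementary modification $\Ker(\tilde{\cal F}_R^{i-1}\rightarrow\tilde{\cal G}^{i-1})$, so on $(C_R\times Y)/R$ one has the short exact sequence
$$
 0\;\longrightarrow\;\tilde{\cal F}_R^i\;\longrightarrow\;\tilde{\cal F}_R^{i-1}\;\longrightarrow\;\tilde{\cal G}^{i-1}\;\longrightarrow\;0\,,
$$
in which $\tilde{\cal G}^{i-1}$, being a quotient of $\tilde{\cal F}^{i-1}_k$, is a coherent sheaf on the closed fibre $C_k\times Y$ and hence is annihilated by $t$. I would then apply $(-)\otimes_R k$ (i.e.\ restrict to $C_k\times Y$) and write out the long exact $\Tor^R(-,k)$-sequence: the flatness of $\tilde{\cal F}_R^{i-1}$ over $R$ gives $\Tor_1^R(\tilde{\cal F}_R^{i-1},k)=0$, so the sequence reduces to
$$
 0\;\longrightarrow\;\Tor_1^R(\tilde{\cal G}^{i-1},k)\;\longrightarrow\;\tilde{\cal F}_k^i\;\longrightarrow\;\tilde{\cal F}_k^{i-1}\;\longrightarrow\;\tilde{\cal G}^{i-1}\;\longrightarrow\;0\,.
$$
Resolving $k$ over the discrete valuation ring $R$ by $0\rightarrow R\xrightarrow{\,t\,}R\rightarrow k\rightarrow 0$ identifies $\Tor_1^R(M,k)$ with the $t$-torsion submodule of $M$; applied to $M=\tilde{\cal G}^{i-1}$, which is already a $k$-module, this yields $\Tor_1^R(\tilde{\cal G}^{i-1},k)\cong\tilde{\cal G}^{i-1}$. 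Finally, the image of $\tilde{\cal F}_k^i\rightarrow\tilde{\cal F}_k^{i-1}$ is the kernel of $\tilde{\cal F}_k^{i-1}\rightarrow\tilde{\cal G}^{i-1}$, which is $\tilde{\cal B}^{i-1}$ by the very definition $\tilde{\cal G}^{i-1}=\tilde{\cal F}_k^{i-1}/\tilde{\cal B}^{i-1}$. Splicing the four-term sequence at this image produces
$$
 0\;\longrightarrow\;\tilde{\cal G}^{i-1}\;\longrightarrow\;\tilde{\cal F}_k^i\;\longrightarrow\;\tilde{\cal B}^{i-1}\;\longrightarrow\;0\,,
$$
as claimed. (The statement is meant for $i\ge 2$; for $i=1$ there is no $\tilde{\cal G}^0$ or $\tilde{\cal B}^0$.)

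I do not anticipate a genuine obstacle: the two points requiring care are the bookkeeping of which sheaves live on $C_R\times Y$ versus on the closed fibre $C_k\times Y$, and the observation that each $\tilde{\cal F}_R^j$ is flat (equivalently $t$-torsion-free) over $R$ because it is a subsheaf of the torsion-free $\tilde{\cal F}_R^1$ — both already recorded in the construction above. The one step worth stating explicitly is the identification $\Tor_1^R(\tilde{\cal G}^{i-1},k)\cong\tilde{\cal G}^{i-1}$, since it is exactly what makes the left-hand term of the final sequence come out as $\tilde{\cal G}^{i-1}$ rather than a proper subsheaf of it.
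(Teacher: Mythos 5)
Your argument is correct and is essentially the paper's own proof: both start from the short exact sequence $0\rightarrow\tilde{\cal F}_R^i\rightarrow\tilde{\cal F}_R^{i-1}\rightarrow\tilde{\cal G}^{i-1}\rightarrow 0$, restrict to the closed fibre via the long exact $\Tor$-sequence, kill $\Tor_1$ of $\tilde{\cal F}_R^{i-1}$ by $R$-flatness, identify $\Tor_1$ of $\tilde{\cal G}^{i-1}$ with $\tilde{\cal G}^{i-1}$ itself (the paper writes this as $(t)\otimes\tilde{\cal G}^{i-1}\simeq\tilde{\cal G}^{i-1}$, you via the resolution $0\rightarrow R\xrightarrow{\,t\,}R\rightarrow k\rightarrow 0$ and $t$-torsion), and note that the image of $\tilde{\cal F}_k^i\rightarrow\tilde{\cal F}_k^{i-1}$ is $\tilde{\cal B}^{i-1}$. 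Your parenthetical remark that the statement is meaningful only for $i\ge 2$ is a fair reading of the paper's ``for all $i$.''
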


\begin{proof}
 The short exact sequence
   $$
     0\;\longrightarrow\;
	   \tilde{\cal F}_R^i\;\longrightarrow\;
	   \tilde{\cal F}_R^{i-1}\; \longrightarrow\;
	   \tilde{\cal G}^{i-1}\; \longrightarrow\; 0
   $$
  induces a long exact sequence of ${\cal O}_{C_R\times Y}$-modules
   $$
   \begin{array}{l}
      \cdots\;\longrightarrow\;
	   \Tor^1_{C_R\times Y}({\cal O}_{C_k\times Y}, \tilde{\cal  F}^{i-1}_R)\;
	    \longrightarrow\;
	   \Tor^1_{C_R\times Y}({\cal O}_{(C_k\times Y)}, \tilde{\cal G}^{i-1})
	                      \\[1.6ex]
	 \hspace{20em}\longrightarrow\;
	  \tilde{\cal F}^i_k\;\longrightarrow\;
      \tilde{\cal F}^{i-1}_k\; \longrightarrow\; 	
	  \tilde{\cal G}^{i-1}\;\longrightarrow\; 0\,.
   \end{array}	
  $$
 The lemma follows from the observations that
  $$
   \begin{array}{c}
     \Tor^1_{C_R\times Y}({\cal O}_{C_k\times Y},  \tilde{\cal  F}^{i-1}_R  )\;
       =0\; , \\[1.6ex]
     \Tor^1_{C_R\times Y}({\cal O}_{(C_k\times Y)}, \tilde{\cal G}^{i-1})\;
       \simeq\; (t)\otimes_{{ C_R\times Y}}\tilde{\cal G}^{i-1}\;
	   \simeq\; \tilde{\cal G}^{i-1}\;,  \\[1.6ex]
	 \Image(\tilde{\cal F}^i_k\rightarrow \tilde{\cal F}^{i-1}_k)\;
   	   =\; \tilde{\cal B}^{i-1}\;.
   \end{array}
  $$

\end{proof}

\bigskip

These two sets of exact sequences give rise to two sequences of homomorphisms of
 coherent sheaves on $C_k\times Y$:
    $$
	  \cdots\; \longrightarrow\;  \tilde{\cal  B}^{i+1}\; \longrightarrow\;
	   \tilde{\cal B}^i\; \longrightarrow\; \cdots\; \longrightarrow\; \tilde{\cal B}^1
	$$
    and 	
    $$
	  \tilde{\cal G}^1\; \longrightarrow\; \cdots\; \longrightarrow\;
	   \tilde{\cal  G}^i\; \longrightarrow\;  \tilde{\cal G}^{i+1}\; \longrightarrow\; \cdots\,.
	$$   	

\bigskip

\begin{slemma}
 {\bf [stationary behavior of
           $\{\tilde{\cal B}^i\}_i$,
		   $\{\tilde{\cal G}^i\}_i$, and $\{\tilde{\cal F}^i_k\}_i$].}
 There exists an $i_0\ge 1$ such that for all $i\ge i_0$,
   the homomorphisms
      $\tilde{\cal B}^{i+1}\rightarrow  \tilde{\cal B}^i$  and
  	  $\tilde{\cal G}^i \rightarrow \tilde{\cal G}^{i+1}$
    are isomorphisms   and
   $\tilde{\cal F}_k^i\simeq \tilde{\cal B}\oplus \tilde{\cal G}$,
 where
  $\tilde{\cal B}\simeq  \tilde{\cal B}^i$  and
  $\tilde{\cal G}\simeq \tilde{\cal G}^i$ for any $i\ge i_0$.
\end{slemma}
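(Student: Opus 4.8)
The plan is to transcribe Stacy Langton's termination argument ([La]; see [H-L: proof of Theorem~2.B.1] for a streamlined version), with Hilbert polynomials and reduced Hilbert polynomials replaced throughout by the twisted central charge $Z^{B+\sqrt{-1}J,L}$ and the $Z$-slope $\mu^Z$. As a preliminary normalization --- following [H-L: Sec.~2.B] --- I would work modulo $0$-dimensional coherent sheaves, so that each $\tilde{\cal F}^i_k$ and each quotient $\tilde{\cal G}^i$ above is purely $1$-dimensional and the Harder--Narasimhan machinery of Lemma~2.2.8, Definition~2.2.9 and Theorem~2.2.10 applies directly; for $1$-dimensional sheaves this is what the Huybrechts--Lehn quotient-category device does, and it is essentially cosmetic. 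Since $\tilde{\cal F}^i_R$ and $\tilde{\cal F}^1_R$ agree over $C_K\times Y$ and are all $R$-flat, every $\tilde{\cal F}^i_k$ has the same twisted central charge $c$, hence the same $\chi_0$ and curve class $\tilde{\beta}_0$, so $\mu^Z(\tilde{\cal F}^i_k)=\mu_0:=-\Real c/\Imaginary c$ is fixed.

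Step~1: $\mu^Z_{max}(\tilde{\cal F}^i_k)=\mu^Z(\tilde{\cal B}^i)$ is non-increasing and eventually constant. Composing the inclusion $\tilde{\cal B}^{i+1}\hookrightarrow\tilde{\cal F}^{i+1}_k$ with the surjection $\tilde{\cal F}^{i+1}_k\twoheadrightarrow\tilde{\cal B}^i$ of the preceding lemma gives $\phi_i:\tilde{\cal B}^{i+1}\to\tilde{\cal B}^i$. If $\phi_i\ne 0$, then since $\tilde{\cal B}^{i+1}$ and $\tilde{\cal B}^i$ are $Z$-semistable (Lemma~2.2.8), Proposition~2.2.4 gives $\mu^Z(\tilde{\cal B}^{i+1})\le\mu^Z(\tilde{\cal B}^i)$; if $\phi_i=0$, then $\tilde{\cal B}^{i+1}\subset\Ker(\tilde{\cal F}^{i+1}_k\to\tilde{\cal B}^i)=\tilde{\cal G}^i$, so $\mu^Z(\tilde{\cal B}^{i+1})\le\mu^Z_{max}(\tilde{\cal G}^i)<\mu^Z(\tilde{\cal B}^i)$, the last strict inequality because $\tilde{\cal B}^i$ is the first Harder--Narasimhan term of $\tilde{\cal F}^i_k$, which by the running hypothesis is not $Z$-semistable (so $\tilde{\cal G}^i\ne 0$). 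Thus the sequence never increases, and strictly decreases whenever some $\phi_i=0$. Since $\mu_0\le\mu^Z_{max}(\tilde{\cal F}^i_k)\le\mu^Z_{max}(\tilde{\cal F}^1_k)$ and, by Kleiman's ampleness criterion (as in the proof of Lemma~2.1.7), the effective class $\tilde{\beta}(\tilde{\cal B}^i)$ ranges over a finite set, the value $\mu^Z(\tilde{\cal B}^i)=(\chi(\tilde{\cal B}^i)-B\cdot\tilde{\beta}(\tilde{\cal B}^i))/((J+L)\cdot\tilde{\beta}(\tilde{\cal B}^i))$ assumes only finitely many values; the non-increasing sequence therefore stabilizes: there is $i_1$ with $\mu^Z_{max}(\tilde{\cal F}^i_k)=\mu_{max}$ for $i\ge i_1$, and in particular $\phi_i\ne 0$ for $i\ge i_1$.

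Step~2: the $\phi_i$ become injective, hence isomorphisms, and the decomposition follows. For $i\ge i_1$ I would show $\tilde{\cal C}^{i+1}:=\tilde{\cal B}^{i+1}\cap\tilde{\cal G}^i=\Ker\phi_i=0$ inside $\tilde{\cal F}^{i+1}_k$. The quotient $\tilde{\cal B}^{i+1}/\tilde{\cal C}^{i+1}$ embeds into $\tilde{\cal F}^{i+1}_k/\tilde{\cal G}^i\cong\tilde{\cal B}^i$, so it is purely $1$-dimensional with $\mu^Z\le\mu_{max}$; it is nonzero, since $\tilde{\cal B}^{i+1}\subset\tilde{\cal G}^i$ would give $\mu_{max}\le\mu^Z_{max}(\tilde{\cal G}^i)<\mu_{max}$; and as a nonzero $1$-dimensional quotient of the $Z$-semistable $\tilde{\cal B}^{i+1}$ of slope $\mu_{max}$ it has $\mu^Z\ge\mu_{max}$. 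Hence $\mu^Z(\tilde{\cal B}^{i+1}/\tilde{\cal C}^{i+1})=\mu_{max}$, and additivity of $Z$ (Lemma~2.1.6) then forces the subsheaf $\tilde{\cal C}^{i+1}$ of the pure $1$-dimensional $\tilde{\cal B}^{i+1}$ to be $0$ or to have $\mu^Z=\mu_{max}$; the latter contradicts $\tilde{\cal C}^{i+1}\subset\tilde{\cal G}^i$ with $\mu^Z_{max}(\tilde{\cal G}^i)<\mu_{max}$, so $\tilde{\cal C}^{i+1}=0$. The resulting descending chain $\cdots\subset\tilde{\cal B}^{i+1}\subset\tilde{\cal B}^i\subset\cdots$ ($i\ge i_1$) of $Z$-semistable subsheaves of $\tilde{\cal B}^{i_1}$ of slope $\mu_{max}$ stabilizes: the positive integers $H\cdot\tilde{\beta}(\tilde{\cal B}^i)$ are non-increasing, hence constant for $i\ge i_2$, whereupon $\tilde{\cal B}^i/\tilde{\cal B}^{i+1}$ is $0$-dimensional with $\tilde{\beta}(\tilde{\cal B}^{i+1})=\tilde{\beta}(\tilde{\cal B}^i)$, and $\mu^Z(\tilde{\cal B}^{i+1})=\mu^Z(\tilde{\cal B}^i)=\mu_{max}$ forces $\chi(\tilde{\cal B}^{i+1})=\chi(\tilde{\cal B}^i)$, i.e.\ $\tilde{\cal B}^i/\tilde{\cal B}^{i+1}=0$ and $\phi_i$ is an isomorphism. (Equivalently, the $Z$-semistable purely $1$-dimensional sheaves on $C_k\times Y$ of slope $\mu_{max}$ form an abelian category --- a standard consequence of Proposition~2.2.4 --- in which every object has finite length, so a descending chain of subobjects terminates.) Writing $\tilde{\cal B}:=\tilde{\cal B}^{i_2}$: for $i\ge i_2$ the invariants $Z(\tilde{\cal G}^i)=c-Z(\tilde{\cal B})$, $\tilde{\beta}(\tilde{\cal G}^i)$ and $\chi(\tilde{\cal G}^i)$ are independent of $i$; the map $\tilde{\cal G}^{i-1}\to\tilde{\cal G}^i$ obtained from the two short exact sequences above has kernel $\tilde{\cal G}^{i-1}\cap\tilde{\cal B}^i=0$, so it is an injection of sheaves with equal Hilbert polynomials, hence an isomorphism for $i$ large; call the common sheaf $\tilde{\cal G}$. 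Finally, for $i\ge i_2+1$ the composite $\tilde{\cal B}^i\hookrightarrow\tilde{\cal F}^i_k\twoheadrightarrow\tilde{\cal B}^{i-1}$ is exactly $\phi_{i-1}$, an isomorphism, which splits the exact sequence $0\to\tilde{\cal G}^{i-1}\to\tilde{\cal F}^i_k\to\tilde{\cal B}^{i-1}\to 0$; therefore $\tilde{\cal F}^i_k\cong\tilde{\cal G}^{i-1}\oplus\tilde{\cal B}^i\cong\tilde{\cal B}\oplus\tilde{\cal G}$, and $i_0:=i_2+1$ works.

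The step I expect to need the most care is not the slope bookkeeping but the dimension bookkeeping supporting it: a priori neither $\tilde{\cal F}^i_k$ nor, especially, the quotients $\tilde{\cal G}^i$ need be purely $1$-dimensional, so one must either pass consistently to $\Coh(C_k\times Y)$ modulo $0$-dimensional sheaves, as in [H-L: Sec.~2.B], or work with pure parts throughout, and then verify that the Harder--Narasimhan terms, the slope comparisons, the purity argument giving $\tilde{\cal C}^{i+1}=0$, and the final splitting all survive, and that one still obtains an honest direct-sum decomposition $\tilde{\cal F}^i_k\simeq\tilde{\cal B}\oplus\tilde{\cal G}$ as claimed. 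The only other genuinely non-formal ingredient is Langton's monotonicity of $\mu^Z_{max}$, carried over via Proposition~2.2.4, together with the Kleiman-type finiteness needed to upgrade ``non-increasing and bounded below'' to ``eventually constant''.
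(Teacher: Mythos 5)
Your proposal is correct and follows essentially the same route as the paper's own proof: both transcribe Langton's elementary-modification termination argument, first stabilizing $\mu^Z(\tilde{\cal B}^i)$ via discreteness of the possible slope values together with the lower bound $\mu^Z(\tilde{\cal F}_K)$, then deducing $\tilde{\cal B}^{i+1}\cap\tilde{\cal G}^i=0$, stabilizing the two chains $\{\tilde{\cal B}^i\}$ and $\{\tilde{\cal G}^i\}$, and finally splitting the sequence $0\to\tilde{\cal G}^{i-1}\to\tilde{\cal F}^i_k\to\tilde{\cal B}^{i-1}\to 0$. The only divergences are cosmetic: where you stabilize the increasing chain of $\tilde{\cal G}^i$'s by an injection-with-equal-Hilbert-polynomial count, the paper uses stabilization of the effective $1$-cycles followed by a reflexive-hull argument via $\Extsheaf^3_{C_k\times Y}(\Extsheaf^3_{C_k\times Y}(-,\omega_{C_k\times Y}),\omega_{C_k\times Y})$, and your explicit handling of the purity/$0$-dimensional bookkeeping (working modulo $0$-dimensional sheaves, à la [H-L: Sec.~2.B]) makes precise a point the paper's proof leaves implicit.
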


\begin{proof}
 Observe first that
  $$
     \{\tilde{\beta}(\tilde{\cal F}^{\prime})\,|\,
	       \mbox{$\tilde{\cal F}^{\prime}$ is a subsheaf of $\tilde{\cal F}^i_k$
                     		   for some $i$, $i \ge 1$} \}
  $$
  is a finite subset of $N_1(C_k\times Y)_{\Bbb Z}$.
 Thus,  for any subsheaf of $\tilde{\cal F}^i_k$, $i\ge 1$,
    $\mu^Z(\tilde{\cal F}^{\prime})$ takes values
     in a discrete affine lattice of rank $1$
	(i.e.\ a shift -- due to the effect of $B$ in $B+\sqrt{-1}J$ --
	    of an additive subgroup isomorphic to ${\Bbb Z}$)
	in ${\Bbb R}$ that is independent of $i$.
 Back to our problem,
 since $\tilde{\cal B}^i$ are $Z$-semistable,
    $$
	  \cdots\; \le\; \mu^Z( \tilde{\cal  B}^{i+1})\; \le\;
	   \mu^Z(\tilde{\cal B}^i)\; \le\; \cdots\; \le\; \mu^Z(\tilde{\cal B}^1)\,.
	$$
 Since
    $\mu^Z(\tilde{\cal B}^i)
	     > \mu^Z(\tilde{\cal F }^i_k)
		 =    \mu^Z(\tilde{\cal F}_K)$ for all $i$,
  the above observation implies that
   there exists an $i_0\ge 1$
    such that
      $$
 	   \cdots\; =\; \mu^Z( \tilde{\cal  B}^{i+1})\; =\;
	    \mu^Z(\tilde{\cal B}^i)\; =\; \cdots\; =\; \mu^Z(\tilde{\cal B}^{i_0})
	  $$
      for all $i\ge i_0$.
 On the other hand, note that
  $\tilde{\cal B}^{i+1}/(\tilde{\cal  B}^{i+1}\cap \tilde{\cal G}^i)$
  is isomorphic to a non-zero subsheaf of $\tilde{\cal B}^i$ and,
  since $\tilde{\cal B}^i$'s are $Z$-semistable,

  $$
   \mu^Z(\tilde{\cal B}^{i+1})\;
    \le\; \mu^Z(\tilde{\cal B}^{i+1}/(\tilde{\cal B}^{i+1}\cap \tilde{\cal G}^i))\;
    \le\; \mu^Z(\tilde{\cal B}^i)	
  $$
  with the equalities hold if and only if $\tilde{\cal B}^{i+1}\cap \tilde{\cal G}^i=0$.
 It follows that $\tilde{\cal B}^{i+1}\cap \tilde {\cal G}^i=0$  for $i\ge i_0$,
  which implies that
   the homomorphisms,
    $\tilde{\cal B}^{i+1}\rightarrow \tilde{\cal B}^i$ and
    $\tilde{\cal G}^i\rightarrow \tilde{\cal  G}^{i+1}$,
   are injective for $i\ge i_0$.
 Being non-zero subsheaves,
   $\tilde{\beta}(\tilde{\cal B}^{i+1})$
    is an effective sub-$1$-cycle of $\tilde{\beta}(\tilde{\cal B}^i)$   and
  $\tilde{\beta}(\tilde{\cal G}^i)$
    is an effective sub-$1$-cycle of $\tilde{\beta}(\tilde{\cal G}^{i+1})$.
 Since all are effective sub-$1$-cycles of $\tilde{\beta}(\tilde{\cal F}_k)$,
  there exists an $i_1\ge i_0$  such that
   $$
    \tilde{\beta}(\tilde{\cal B}^{i+1})\;
 	  =\; \tilde{\beta}(\tilde{\cal B}^i)
     \hspace{2em}\mbox{and}\hspace{2em}
    \tilde{\beta}(\tilde{\cal G}^i)\;
       =\; \tilde{\beta}(\tilde{\cal G}^{i+1})\,,
   $$
   for all $i\ge i_1$.
 It follows that
  $$
    Z^{B+\sqrt{-1}J,L}(\tilde{\cal B}^{i+1})\;
      =\;  Z^{B+\sqrt{-1}J,L}(\tilde{\cal B}^i)
    	 \hspace{1em}\mbox{and, hence,}\hspace{1em}
    Z^{B+\sqrt{-1}J,L}(\tilde{\cal G}^i)\;
      =\; Z^{B+\sqrt{-1}J,L}(\tilde{\cal G}^{i+1})\,,
  $$	
  for all $i\ge i_1$, and that
  the inclusions of the purely $1$-dimensional coherent sheaves,
    $\tilde{\cal B}^{i+1}\rightarrow \tilde{\cal B}^i$ and
    $\tilde{\cal G}^i\rightarrow \tilde{\cal  G}^{i+1}$,
   are isomorphisms in dimension-$1$, for $i\ge i_1$.
 Now
  $$
    \tilde{\cal G}^{i_1}\;\subset \; \cdots\;  \subset\;
    \tilde{\cal G}^i\;  \subset\; \tilde{\cal G}^{i+1}\;  \subset\; \cdots
  $$
   is an inclusion sequence of purely $1$-dimensional coherent sheaves
     which are isomorphic in dimension $1$.
 In particular, their reflexive hulls
  $(\tilde{\cal G }^i)^{DD}
      := \Extsheaf^3_{C_k\times Y}
            ( \Extsheaf^3_{C_k\times Y}
			                  (\tilde{\cal G}^i , \omega_{C_k\times Y}        )\,,\,
				 \omega_{C_k\times Y})$	
  are canonically isomorphic, induced by the inclusions.
 Here, $\omega_{C_k\times Y}$ is the dualizing sheaf of $C_k\times Y$
  and the superscript $3$ is the codimension of $\tilde{\cal G}^i$ in $C_k\times Y$.
 It follows that
  there exists an $i_2\ge i_1$ such that
   $\tilde{\cal G}^i\stackrel{\sim}{\rightarrow}{\cal G}^{i+1}$
   for all $i\ge i_2$.
 Recall the exact sequence
  $0\rightarrow \tilde{\cal G}^{i-1}\rightarrow \tilde{\cal F}^i_k
      \rightarrow \tilde{\cal B}^{i-1}\rightarrow 0$;
  one has thus in addition that
   the exact sequence
     $0\rightarrow  \tilde{\cal B}^i  \rightarrow   \tilde{\cal F}^i_k
	     \rightarrow  \tilde{\cal G}^i  \rightarrow 0$
     splits and
   $\tilde{\cal  B}^{i+1}\stackrel{\sim}{\rightarrow}  \tilde{\cal B}^i$,
   for all $i\ge i_2+1$.	

 With $i_2+1$ re-denoted by $i_0$, this proves the lemma.

\end{proof}

\bigskip

\noindent
After removing finitely many terms and relabelling,
  we may assume without loss of generality that $i_0=1$ in the following discussion.

Define now $\tilde{\cal Q}^i=\tilde{\cal F}_R/\tilde{\cal F}_R^i$.
Then $\tilde{\cal Q }^i_k\simeq \tilde{\cal G}$    and
  there are exact sequences
    $$
      0\;\longrightarrow\; \tilde{\cal G}\rightarrow \tilde{\cal Q}^{i+1}\;
	      \longrightarrow\; \tilde{\cal Q}^i\;  \longrightarrow\; 0
    $$	
  for all $i$.
It follows from the Local Flatness Criterion [H-L:Lemma~2.1.3] that
  $\tilde{\cal Q}^i$ is a $R/(t^i)$-flat quotient
   of $\tilde{\cal F}_R/((t^i)\tilde{\cal F}_R)$ for all $i$.
Hence the image of the proper morphism
  $$
     \pi_{R}\; :\;
	   \Quot^{Z^{B+\sqrt{-1},L}}_{(C_R\times Y)/R}
            (\tilde{\cal F},  Z^{B+\sqrt{-1}J,L}(\tilde{\cal G})) \;
        \longrightarrow\;  \Spec R
  $$
   contains the closed subscheme $\Spec(R/(t^i))$ for all $i$.
 This implies that
  $\pi_R$ is actually surjective   and, hence,
  there exists a field extension $K \subset  K^{\prime}$ of $K$
   such that
     $\tilde{F}_{K^{\prime}}$ admits a destabilizing quotient
 	 with central charge $Z^{B+\sqrt{c}J,L}(\tilde{\cal G})$.
 This contradicts the semistability assumption on $\tilde{\cal F}_K$
   since
    $\tilde{F}_K$ is $Z$-semistable
	if and only if $\tilde{\cal F}_{K^{\prime}}$ is $Z$-semistable.	
 This proves Proposition~3.5  and hence the completeness of
   the stack $\FM^{1,[0];\scriptsizeZss}_{C_{\cal M}/{\cal M}}(Y;c)$.

 \bigskip

Altogether, this proves the compactness of
   $\FM^{1,[0];\scriptsizeZss}_{C_{\cal M}/{\cal M}}(Y;c)$,
 as claimed in Theorem~3.1.

\newpage
\baselineskip 13pt
{\footnotesize

}

\end{document}